\renewcommand*{\MR}[1]{ \href{http://www.ams.org/mathscinet-getitem?mr=#1}{MR \textbf{#1}}}
\newcommand*{\arxiv}[1]{\href{http://www.arxiv.org/abs/#1}{arXiv: #1}}
\renewcommand{\PrintDOI}[1]{\href{http://dx.doi.org/\detokenize{#1}}{doi: \detokenize{#1}}}
\setlist[enumerate,1]{label=\textup{(\arabic*)}}% ensure enumerates in theorems are upright
\tikzset{node distance=2cm, auto}
\tikzset{cd/.style=matrix of math nodes,row sep=2em,column sep=2em, text height=1.5ex, text depth=0.5ex}
\tikzset{cdar/.style=->,auto}
\tikzset{mid/.style={anchor=mid}} % put labels on the arrow
\tikzset{narrowfill/.style={inner sep=1pt, fill=white}}% style for nodes with filled background
\tikzset{rndblock/.style={rounded corners,rectangle,draw,outer sep=0pt}}
\theoremstyle{plain}
\newtheorem{theorem}{Theorem}
\newtheorem{lemma}[theorem]{Lemma}
\newtheorem{proposition}[theorem]{Proposition}
\newtheorem{corollary}[theorem]{Corollary}
\theoremstyle{definition}
\newtheorem{definition}[theorem]{Definition}
\theoremstyle{remark}
\newtheorem{remark}[theorem]{Remark}
\newtheorem{example}[theorem]{Example}
\numberwithin{theorem}{section}
\numberwithin{equation}{section}
\DeclareMathOperator{\Aut}{Aut}% automorphism group
\DeclareMathOperator{\Bis}{Bis}% bisections of an étale groupoid
\newcommand*{\nb}{\nobreakdash}
\newcommand*{\Star}{\(^*\)\nobreakdash-}
\newcommand*{\C}{\mathbb C}
\newcommand*{\Z}{\mathbb Z}
\newcommand*{\R}{\mathbb R}
\newcommand*{\N}{\mathbb N}
\newcommand*{\T}{\mathbb T}% circle group
\newcommand*{\Ideals}{\mathbb I}% ideal lattice
\newcommand*{\Null}{\mathcal N}% null space for positive map
\newcommand*{\Bound}{\mathbb B}%adjointable operators on a Hilbert module
\newcommand*{\Comp}{\mathbb K}%compact operators on a Hilbert module
\newcommand*{\Mat}{\mathbb M}%matrices
\newcommand{\Her}{\mathbb H}%hereditary subalgebras
\newcommand*{\red}{\mathrm r}% reduced
\newcommand*{\ess}{\mathrm{ess}}% essential
\newcommand*{\alg}{\mathrm{alg}} % algebraic
\newcommand*{\Cst}{\textup C^*}% C*-algebra
\newcommand*{\Mult}{\mathcal M}% multiplier algebra
\newcommand*{\Locmult}{\mathcal{M}_\mathrm{loc}}% local multiplier algebra
\newcommand*{\Cont}{\textup C}% continuous functions
\newcommand{\idealin}{\mathrel{\triangleleft}} % relation of being an ideal
\newcommand*{\Id}{\textup{Id}}%identity
\newcommand*{\Ad}{\textup{Ad}}%conjugation by a unitary
\newcommand*{\Hils}[1][H]{\mathcal #1}%Hilbert space
\newcommand*{\Hilm}[1][E]{\mathcal #1}%Hilbert module
\newcommand*{\A}{\mathcal A}% Fell bundles
\newcommand*{\defeq}{\mathrel{\vcentcolon=}}% used for definitions
\newcommand*{\congto}{\xrightarrow\sim}
\DeclarePairedDelimiter{\abs}{\lvert}{\rvert}% absolute value
\DeclarePairedDelimiter{\norm}{\lVert}{\rVert}% norm
\DeclarePairedDelimiter{\ket}{\lvert}{\rangle}% ket-bra notation
\DeclarePairedDelimiter{\bra}{\langle}{\rvert}% ket-bra notation
\DeclarePairedDelimiterX{\braket}[2]{\langle}{\rangle}{#1\,\delimsize\vert\,\mathopen{}#2}% inner product
\DeclarePairedDelimiterX{\BRAKET}[2]{\langle}{\rangle}{\!\delimsize\langle#1\,\delimsize\vert\,\mathopen{}#2\delimsize\rangle\!}% inner product
\DeclarePairedDelimiterX{\setgiven}[2]{\{}{\}}{#1\,{:}\,\mathopen{}#2}% set given by
\newcommand*{\dual}[1]{\widehat{#1}}% duals
\newcommand*{\s}{s}% source map of groupoids
\newcommand*{\rg}{r}% range map of groupoids
\newcommand*{\onto}{\twoheadrightarrow}
\begin{document}
\title[Aperiodicity: the almost extension property and pseudo-expectations]{Aperiodicity: the almost extension property and uniqueness of pseudo-expectations}

\author{Bartosz Kosma Kwa\'sniewski}
\email{bartoszk@math.uwb.edu.pl}
 \address{Faculty of Mathematics\\
   University  of Bia\l ystok\\
   ul.\@ K.~Cio\l kowskiego 1M\\
   15-245 Bia\l ystok\\
   Poland}

\author{Ralf Meyer}
\email{rmeyer2@uni-goettingen.de}
\address{Mathematisches Institut\\
 Georg-August-Universit\"at G\"ottingen\\
 Bunsenstra\ss e 3--5\\
 37073 G\"ottingen\\
 Germany}

\begin{abstract}
  We prove implications among the conditions in the title for
  general \(\Cst\)\nb-inclusions \(A\subseteq B\), and we also
  relate this to several other properties in case~\(B\) is a crossed
  product for an action of a group, inverse semigroup or an \'etale
  groupoid on~\(A\).  We show that an aperiodic
  \(\Cst\)\nb-inclusion has a unique pseudo-expectation.  If, in
  addition, the unique pseudo-expectation is faithful, then~\(A\)
  supports~\(B\) in the sense of the Cuntz preorder.  The almost
  extension property implies aperiodicity, and the converse holds
  if~\(B\) is separable.  A crossed product inclusion
  has the almost extension property if and only if
  the dual groupoid of the action is topologically principal.
  Topologically free actions are always aperiodic.  If~\(A\) is
  separable or of Type~I, then topological freeness, aperiodicity
  and having a unique pseudo-expectation are equivalent to the
  condition that~\(A\) detects ideals in all
  \(\Cst\)\nb-algebras~\(C\) with \(A\subseteq C \subseteq B\).  If,
  in addition, \(B\) is separable, then all these conditions are
  equivalent to the almost extension property.
\end{abstract}

\keywords{crossed product; Fell bundle; aperiodic; unique
  pseudo-expectation; almost extension property; topological
  freeness; inverse semigroup; \'etale groupoid}

\thanks{BKK was partially supported by the National Science Center
  (NCN), Poland, grant no.~2019/35/B/ST1/02684}

\subjclass[2010]{46L55, 46L05}% 20M18, 22A22
\maketitle
% \setcounter{tocdepth}{1}
% \tableofcontents

\section{Introduction}
\label{sec:introduction}

Many important \(\Cst\)\nb-algebras may be described using crossed
products for group actions and their generalisations.  This makes it
important to describe the ideal structure of crossed products or to
decide whether they are purely infinite.  Very satisfactory criteria
for this were developed around 1980 by Olesen and Pedersen
\cites{Olesen-Pedersen:Applications_Connes,
  Olesen-Pedersen:Applications_Connes_2,
  Olesen-Pedersen:Applications_Connes_3}, Kishimoto
\cites{Kishimoto:Outer_crossed, Kishimoto:Freely_acting}, and
Rieffel~\cite{Rieffel:Actions_finite}.  These were recently extended
in~\cite{Kwasniewski-Meyer:Aperiodicity} from ordinary group actions
to Fell bundles over groups.  Partial, twisted actions of groups are
a special case of this.  Another important generalisation of this
theory is to actions of inverse semigroups by Hilbert bimodules.
Such actions and their crossed products model \(\Cst\)\nb-algebras
associated to Fell bundles over \'etale groupoids.  The main new
difficulties in this more general setting come from the
non-Hausdorffness of locally compact groupoids.  These were overcome
recently in~\cite{Kwasniewski-Meyer:Essential} where we proposed a
construction of an \emph{essential crossed product}.  The latter
coincides with the reduced crossed products for actions of groups or
Hausdorff \'etale groupoids.

The articles by Olesen--Pedersen already study a rather large number
of closely related properties for group actions.  Two properties
have, however, only come into focus more recently.  The first is the
\emph{uniqueness of pseudo-expectations}, which is used, for
instance, in \cites{Pitts:Regular_I, Pitts:Regular_II,
  Pitts-Zarikian:Unique_pseudoexpectation,
  Zarikian:Unique_expectations}.  The second is the \emph{almost
  extension property} for a \(\Cst\)\nb-inclusion \(A\subseteq B\),
which says that there is a dense set of pure states on~\(A\) that
extend uniquely to~\(B\).  This property is used, for instance, in
\cites{Nagy-Reznikoff:Pseudo-diagonals, Exel-Pitts:Weak_Cartan}.  In
this article, we relate these two properties to aperiodicity,
topological freeness and to the property that a
\(\Cst\)\nb-subalgebra \(A\subseteq B\) \emph{detects ideals in all
  intermediate \(\Cst\)\nb-algebras} \(A\subseteq C\subseteq B\).
Our findings are summarised in the diagram in
Figure~\ref{fig:diagram}.
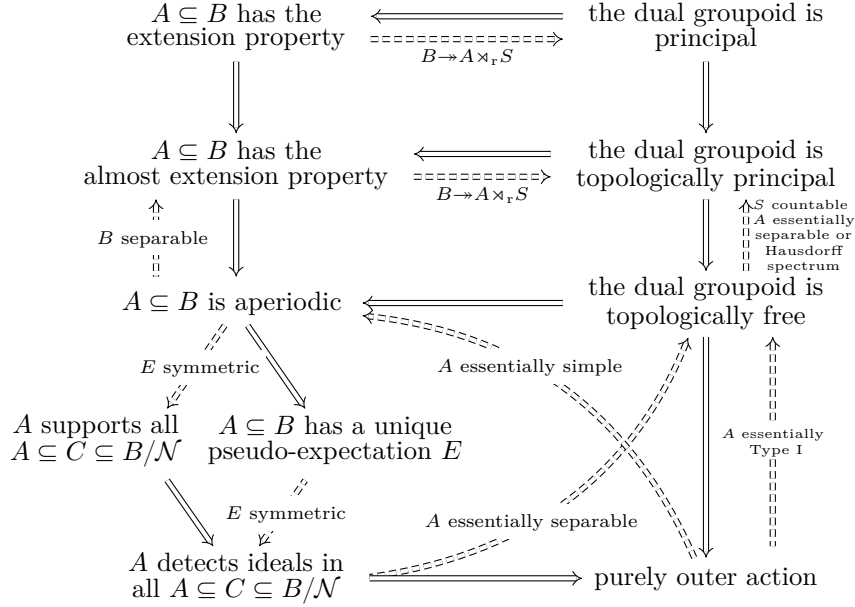
\begin{figure}[htbp]
  \centering
  \begin{tikzcd}[column sep=normal, row sep=huge]
    {\renewcommand{\arraystretch}{0.7}
      \begin{array}{c} A\subseteq B\text{ has the}\\
        \text{extension property}
      \end{array}}
    \ar[d, Rightarrow,  start anchor={[xshift=0em,  yshift=-1ex]},
    end anchor={[xshift=0em, yshift=1ex ]}]
    \ar[r, shift left=1ex, Leftarrow]
    \ar[r, dashed, shift right=1ex, Rightarrow,
    "B\onto A\rtimes_\red S"']
    &
    {\renewcommand{\arraystretch}{0.7}
      \begin{array}{c} \text{the dual groupoid is}\\
        \text{principal}
      \end{array}}
    \ar[d, Rightarrow, start anchor={[xshift=0em,  yshift=-1ex]},
    end anchor={[xshift=0em, yshift=1ex ]}]
    \\
    {\renewcommand{\arraystretch}{0.7}
      \begin{array}{c} A\subseteq B\text{ has the}\\
        \text{almost extension property}
      \end{array}}
    \ar[d, Rightarrow,  start anchor={[xshift=0em,  yshift=-1ex]},
    end anchor={[xshift=0em, yshift=0.5ex ]}]
    \arrow[d, dashed, Leftarrow, "B\text{ separable }" description,
    start anchor={[xshift=-3em,  yshift=-1ex]}, end anchor={[xshift=-3em, yshift=0.5ex ]}]
    \ar[r, shift left=1ex, Leftarrow]
    \ar[r, dashed, shift right=1ex, Rightarrow,
    "B\onto A\rtimes_\red S"']
    &
    {\renewcommand{\arraystretch}{0.7}
      \begin{array}{c} \text{the dual groupoid is}\\
        \txt{topologically principal}
      \end{array}}
    \ar[d, Rightarrow, start anchor={[xshift=0,  yshift=-1ex]},
    end anchor={[xshift=0, yshift=1ex ]}]
	%	/tikz/
    \arrow[d, dashed, Leftarrow,
		"\scriptsize\substack{S\text{ countable } \\ A 	\text{ essentially}\\	\text{separable or}\\  \text{Hausdorff} \\ \text{spectrum}}"
		,
    start anchor={[xshift=1.5em,  yshift=-1ex]}, end anchor={[xshift=1.5em, yshift=1ex ]}]
		%/tikz/inner sep=0.5ex
    \\
    A\subseteq B \text{ is  aperiodic }
    \ar[d,  Rightarrow, start anchor={[xshift=1ex, yshift=0ex]}, end anchor={[xshift=6ex, yshift=1ex ]}]
    \ar[d, dashed, Leftarrow, "\,\,E\text{ symmetric}" description, Rightarrow, start anchor={[xshift=-1ex, yshift=0ex]}, end anchor={[xshift=-6ex, yshift=1ex ]}]
    \ar[r, Leftarrow]
    &
    {\renewcommand{\arraystretch}{0.7}
      \begin{array}{c} \text{the dual groupoid is}
        \\
        \txt{topologically free}
      \end{array}}
    \arrow[dd, Rightarrow, start anchor={[xshift=0ex, yshift=-1ex]}, end anchor={[xshift=0ex, yshift=0ex ]}]
    \arrow[ddl, dashed, Leftarrow, pos=0.6, bend left,
    "A\text{ essentially separable }" description, shorten <= 0.5em]
		\arrow[dd, dashed, Leftarrow,  "\scriptsize \substack{A\text{ essentially} \\ \text{ Type I}}" description,
    start anchor={[xshift=2.5em,  yshift=-1ex]}, end anchor={[xshift=2.5em, yshift=1ex ]}]
    \\
    \renewcommand{\arraystretch}{0.7}
    \begin{tabular}{c}
      $A$\text{ supports all}\\
      $A\subseteq C\subseteq B/\Null$
    \end{tabular}
    \begin{tabular}{c}
      \text{$A\subseteq B$ has a unique}\\
      \text{pseudo-expectation \(E\)}
    \end{tabular}
    \\
    {\renewcommand{\arraystretch}{0.7}
      \begin{array}{c}
        A \text{ detects ideals in}\\
        \text{all }A\subseteq C\subseteq B/\Null
      \end{array}}
    \arrow[u, dashed, Leftarrow, "\,\,E\text{ symmetric}" description, start anchor={[xshift=2ex,  yshift=1ex]}, end anchor={[xshift=6ex, yshift=-1.5ex ]}]
    \ar[u,  Leftarrow, start anchor={[xshift=-2ex,  yshift=1ex]}, end anchor={[xshift=-6ex, yshift=-1.5ex ]}]
    \arrow[r, Rightarrow]
    &
    \text{purely outer action}
    \arrow[uul, dashed, Rightarrow, bend right, pos=0.6,
    "A\text{ essentially simple}" description, end anchor={[xshift=0ex, yshift=-1ex]}]
  \end{tikzcd}

  \caption{Implications among properties of \(\Cst\)\nb-inclusions.
    For the left column, let \(A\subseteq B\) be a
    \(\Cst\)\nb-inclusion, let \(E\colon B\to I(A)\) be a
    pseudo-expectation and let~\(\Null\) be the largest two-sided
    ideal in~\(B\) with \(\Null\subseteq \ker E\).  For the whole
    diagram, assume that~\(B\) is an exotic crossed product for an
    action of an inverse semigroup \(S\) and let \(E\) be the
    canonical pseudo-expectation.}
  \label{fig:diagram}
\end{figure}

  If~\(B\)
is the reduced crossed product for a group action or an essential
crossed product for an action of an inverse semigroups or an \'etale
groupoid, and~\(B\) is separable,   then all the properties
above are equivalent by Theorem~\ref{the:cycle_of_equivalences}.
Here the action may be by automorphisms or by Hilbert bimodules.
The latter case corresponds to Fell bundles.

The almost extension property is introduced by Nagy and
Reznikoff~\cite{Nagy-Reznikoff:Pseudo-diagonals}.  It is weaker than
the well known and extensively studied \emph{extension property} due
to Anderson~\cite{Anderson:Extensions_states}.  Using a criterion by
Anderson for pure states to extend uniquely, we prove the following:
if a \(\Cst\)-inclusion \(A\subseteq B\) has the almost extension
property, then it is \emph{aperiodic} (we generalised aperiodicity
from actions to general \(\Cst\)\nb-inclusions
in~\cite{Kwasniewski-Meyer:Aperiodicity}), and the converse holds
when~\(B\) is separable (see
Theorem~\ref{the:almost_extension_aperiodic}).  When \(B\) is a
crossed product (full or reduced) of an action of a group, inverse
semigroup or an étale groupoid, then we characterise pure states
that extend uniquely in terms of the isotropy of a \emph{dual
  groupoid}.  As a consequence, we show that \(A\subseteq B\) has
the (almost) extension property if and only if the dual groupoid is
(topologically) principal (see
Theorem~\ref{the:almost_extension_top_principal}).  This is a far
reaching generalisation of the recent result of
Zarikian~\cite{Zarikian:Pure_extension}*{Theorem 2.4} proved for
group actions by automorphisms on unital \(\Cst\)-algebras.

We call an action of a group, inverse semigroup or étale groupoid
\emph{topologically free} if the dual groupoid of the action is
topologically free in the sense of
\cite{Kwasniewski-Meyer:Essential}.  In general, this is a weaker
condition than being effective or topologically principal.  For
group actions, it is (at least formally) weaker than the condition
used by
Archbold--Spielberg~\cite{Archbold-Spielberg:Topologically_free} (we
refer to \cite{Kwasniewski-Meyer:Essential}*{Section~2.4} for a
careful comparison of these properties).  We prove that
topologically free actions are always aperiodic (see
Corollary~\ref{cor:top_free_vs_aperiodic}).  So far, this was only
known for actions on separable \(\Cst\)\nb-algebras, and the proof
for actions by Hilbert bimodules (Fell bundles) was rather indirect.
Our proof is based on the technique of excising states
from~\cite{Akemann-Anderson-Pedersen:Excising}.

Let \(I(A)\) be Hamana's injective
envelope~\cite{Hamana:Injective-Envelope-Cstar}.  A
\emph{pseudo-expectation} for a \(\Cst\)\nb-inclusion
\(A\subseteq B\) is defined as a completely positive contraction
\(E\colon B \to I(A)\) that extends the identity map on~\(A\)
(see~\cite{Pitts:Regular_I}).  Unlike genuine conditional
expectations, pseudo-expectations always exist because~\(I(A)\) is
injective.  Pitts and Zarikian studied extensively the case when
there is a unique pseudo-expectation
\cites{Pitts-Zarikian:Unique_pseudoexpectation,
  Zarikian:Unique_expectations}.  We prove that any aperiodic
\(\Cst\)\nb-inclusion \(A\subseteq B\) has a unique
pseudo-expectation \(E\colon B\to I(A)\).  We also improve the
results in~\cite{Kwasniewski-Meyer:Essential} about general
aperiodic inclusions.  Namely, for any aperiodic inclusion
\(A\subseteq B\), we prove that for any \(b\in B\) with \(b\ge 0\)
and \(E(b)\neq 0\), there is \(a\in A\) with \(a \precsim b\) in the
Cuntz preorder on~\(B\).  In particular, if the unique
pseudo-expectation is faithful, this says that~\(A\)
\emph{supports}~\(B\).  This condition plays a crucial role in the
study of pure infiniteness (see \cites{Kwasniewski:Crossed_products,
  Kwasniewski-Meyer:Essential, Kwasniewski-Meyer:Aperiodicity}).
In~\cite{Kwasniewski-Meyer:Essential}, this is proven when there is
a ``supportive'' generalised expectation.  The new information is
that all pseudo-expectations are ``supportive''.  The results in
this paragraph are contained in
Theorem~\ref{thm:aperiodicity_implies_unique_expectation}, which
collects various consequences of aperiodicity.

We say that~\(A\) \emph{detects ideals} in~\(B\) if \(J\cap A = 0\)
for an ideal \(J\subseteq B\) implies \(J=0\).  If an action of a
discrete group~\(G\) on a \(\Cst\)\nb-algebra~\(A\) is topologically
free, then~\(A\) detects ideals in \(A\rtimes_\red G\)
(see~\cite{Archbold-Spielberg:Topologically_free}).  The converse to
this usually fails, except for very special groups like~\(\Z\).  We
show, however, that an action has to be topologically free if~\(A\)
is separable or of Type~I and~\(A\) detects ideals in~\(C\) for all
intermediate \(\Cst\)\nb-algebras \(A\subseteq C\subseteq B\).  This
remains true for actions of inverse semigroups or étale groupoids.
Moreover, it suffices here to consider intermediate
\(\Cst\)\nb-subalgebras that are crossed products associated to a
subgroup, inverse subsemigroup, or a subgroupoid, depending on the
kind of action in question (see Propositions
\ref{pro:intermediate_detection_to_top-free_isg}
and~\ref{pro:intermediate_detection_to_top-free_groupoid}).  If, in
addition, the inverse semigroup that acts is countable, then the
dual groupoid is topologically free if and only if it is
topologically principal by
\cite{Kwasniewski-Meyer:Essential}*{Theorem~6.13}.  And we show that
this is equivalent to the almost extension property.  Thus all
properties studied here are equivalent for actions of countable
inverse semigroups on \(\Cst\)\nb-algebras that are separable or of
Type~I.  These equivalences are collected in
Theorem~\ref{the:cycle_of_equivalences}.

Many of our results are already interesting for group actions by
automorphisms.  In this case, the essential crossed product is the
same as the reduced crossed product, and many technical issues do
not occur.  We expect that some readers are not familiar with the
more general actions of inverse semigroups and étale groupoids.  We
have written the article in such a way that these readers should be
able to get along by just ignoring these generalisations.  To make
the article easier to digest for such readers, we introduce the key
concepts such as aperiodicity and topological freeness first for
group actions and then sketch only briefly how they must be adapted
to treat actions of inverse semigroups and étale groupoids.  Readers
who need the more general theory should
consult~\cite{Kwasniewski-Meyer:Essential} and the references there
for a more thorough introduction of inverse semigroup actions by
Hilbert bimodules, Fell bundles over étale groupoids and their
crossed products.

The paper is organised as follows.  We start with an introduction of
aperiodic actions and aperiodic inclusions in
Section~\ref{sec:aperiodic_actions}.  In Section
\ref{sec:aperiodic_pseudo-expectation} we prove that any aperiodic
inclusion \(A\subseteq B\) has a unique pseudo-expectation and that
all pseudo-expectations have the technical property of being
\emph{supportive} introduced in~\cite{Kwasniewski-Meyer:Essential}.
This strengthens our previous results and gives natural simplicity
and pure infiniteness criteria for general aperiodic
\(\Cst\)-inclusions.  In Section~\ref{sec:top_free_aperiodic}, we
elaborate on topological freeness for actions of groups, inverse
semigroups and \'etale groupoids.  We show that topologically free
actions are aperiodic.  The key result concerns a single Hilbert
bimodule (see Theorem~\ref{the:top_free_vs_aperiodic}).  The proof
uses the concept of a net of elements excising a state
from~\cite{Akemann-Anderson-Pedersen:Excising}.  This concept is
also used in Section~\ref{sec:aperiodicity_almost extension}, where
we discuss the almost extension property and, along the way, the
extension property, for pure states.  We prove that the almost
extension property implies aperiodicity for arbitrary
\(\Cst\)\nb-inclusions, and the converse holds in the separable
case.  We show that a crossed product inclusion has the almost
extension property if and only if the dual groupoid is topologically
principal.  In Section~\ref{sec:detection_in_intermediate} we prove
that an action of an inverse semigroup or an \'etale groupoid is
topologically free if~\(A\) detects ideals in certain intermediate
\(\Cst\)\nb-algebras \(A\subseteq C\subseteq B\), where~\(B\) is an
essential crossed product and~\(A\) contains an essential ideal that
is separable or of Type~I.  In Section~\ref{sec:various_conditions},
we summarise the results of this paper presented in Figure~\ref{fig:diagram}.
Theorems \ref{the:cycle_of_equivalences}
and~\ref{the:cycle_of_equivalences_2} show that various properties
are equivalent for actions of inverse semigroups or étale, locally
compact groupoids on \(\Cst\)\nb-algebras that are ``essentially''
separable, simple, or of Type~I.  These equivalences allow to weaken
the assumptions and strengthen the conclusions in a number of
results in the literature.  We use them in Section~\ref{sec:Cartan}
to charactaterise Cartan inclusions and \(\Cst\)\nb-diagonals.

\section{Aperiodic actions and aperiodic inclusions}
\label{sec:aperiodic_actions}

Aperiodicity is one of the key concepts in this article.  In this
section, we discuss the definition of aperiodic inclusions and how
it translates to aperiodicity for actions of groups, inverse
semigroups and \'etale groupoids.  Here we understand actions in a
broad sense as Fell bundles over such objects. This section contains no new results.

Let~\(A\) be a \(\Cst\)\nb-algebra.  Let
\begin{align*}
  \Her(A)
  &\defeq
    \{ \text{non-zero, hereditary $\Cst$-subalgebras of~} A\},\\
  A^+_1
  &\defeq \setgiven{a\in A}{a\ge0,\ \norm{a}=1}.
\end{align*}

\begin{definition}[\cite{Kwasniewski-Meyer:Aperiodicity}*{Definition 4.1}]
  \label{def:aperiodic_module}
  Let~\(X\) be a normed \(A\)\nb-bimodule.  We say that \(x\in X\)
  satisfies \emph{Kishimoto's condition} if, for any
  \(D\in \Her(A)\) and any \(\varepsilon>0\), there is
  \(a\in D^+_1\) with \(\norm{a x a}<\varepsilon\).  We call~\(X\)
  \emph{aperiodic} if Kishimoto's condition holds for all \(x\in X\)
  (we renamed this last condition in
  \cite{Kwasniewski-Meyer:Essential}*{Definition 5.9}).
\end{definition}

\begin{example}
  \label{exa:Hilbi_from_auto}
  \label{exa:Kishimoto_automorphism}
  An automorphism \(\alpha\in \Aut(A)\) defines a Hilbert
  \(A\)\nb-bimodule~\(A_\alpha\) as follows: it is~\(A\) as a vector
  space, with the bimodule structure
  \[
    a\cdot x\cdot b \defeq a x \alpha(b)
  \]
  for \(a,b\in A\) \(x\in A_\alpha\), and with the left and right
  inner products
  \[
    \BRAKET{x}{y}\defeq xy^*,\qquad
    \braket{x}{y}\defeq \alpha^{-1}(x^*y)
  \]
  for \(x,y\in A_\alpha\).  As a Hilbert bimodule, \(A_\alpha\) is
  given the norm
  \(\norm{x} \defeq \norm{\braket{x}{x}}^{1/2} =
  \norm{\BRAKET{x}{x}}^{1/2}\).  This is equal to the
  \(\Cst\)\nb-norm on~\(A\).

  Kishimoto's condition for the Hilbert
  \(A\)\nb-bimodule~\(A_\alpha\) is exactly the condition introduced
  by Kishimoto in \cite{Kishimoto:Outer_crossed}*{Lemma~1.1}.  By
  \cite{Kishimoto:Freely_acting}*{Theorem~2.1}, \(A_\alpha\) is
  aperiodic if and only if there is no \(\alpha\)\nb-invariant ideal
  \(J\subseteq A\) for which the Borchers spectrum of~\(\alpha|_J\)
  is equal to \(\{1\} \subseteq \T\).  An automorphism with this
  property is often called freely acting or \emph{properly outer}
  (see, for instance, \cites{Kennedy-Schafhauser:noncomm_boundaries,
    Zarikian:Unique_expectations}).  A related condition due to
  Elliott asks for \(\norm{\alpha|_J - \Ad(u)} = 2\) for all
  \(\alpha\)\nb-invariant ideals \(J\subseteq A\) and all unitary
  multipliers~\(u\) of~\(J\).  Kishimoto's condition implies
  Elliott's, and the converse also holds if~\(A\) is separable (see
  \cite{Olesen-Pedersen:Applications_Connes_3}*{Theorem~6.6}).  An
  even weaker condition is \emph{pure outerness}, which says only that
  \(\alpha|_J \neq \Ad(u)\) for all \(\alpha\)\nb-invariant ideals
  \(J\subseteq A\) and all unitary multipliers~\(u\) of~\(J\)
  (see~\cite{Rieffel:Actions_finite}).  If~\(A\) is simple, an
  automorphism is purely outer if and only if it is outer, and then
  it is already properly outer by Kishimoto's Theorem
  from~\cite{Kishimoto:Outer_crossed}.  Purely outer automorphisms
  of Type~I \(\Cst\)\nb-algebras are properly outer as well.  A
  purely outer automorphism of a separable \(\Cst\)\nb-algebra that
  is not properly outer is described in
  \cite{Kwasniewski-Meyer:Aperiodicity}*{Example~2.14}.
\end{example}

\begin{definition}[\cite{Kwasniewski-Meyer:Essential}*{Definition 5.14}]
  \label{def:aperiodic_inclusion}
  A \(\Cst\)\nb-inclusion \(A\subseteq B\) is \emph{aperiodic} if
  the Banach \(A\)\nb-bimodule \(B/A\) is aperiodic, that is, if for
  every \(x\in B\), \(D\in \Her(A)\) and \(\varepsilon>0\), there are
  \(a\in D^+_1\) and \(y\in A\) with \(\norm{a x a - y}<\varepsilon\).
\end{definition}

\begin{proposition}
  \label{pro:aperiodic_group_action}
  Let \(\alpha\colon G\to\Aut(A)\) be a group action of a discrete
  group~\(G\).  Form Hilbert bimodules~\(A_{\alpha_g}\) for
  \(g\in G\) as in
  Example~\textup{\ref{exa:Kishimoto_automorphism}}.  Let
  \(B\defeq A\rtimes G\) be the full crossed product.  The canonical
  inclusion \(A\hookrightarrow B\) is aperiodic if and only if the
  normed \(A\)\nb-bimodules~\(A_{\alpha_g}\) are aperiodic for all
  \(\alpha\in G\setminus\{1\}\).
\end{proposition}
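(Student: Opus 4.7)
The plan is to exploit the standard \(G\)-grading of \(B=A\rtimes G\) together with the canonical contractive projections \(E_h\colon B\to A\delta_h\), \(h\in G\), which exist on the full crossed product by composing the quotient \(B\onto A\rtimes_\red G\) with the canonical conditional expectation of the reduced crossed product. Both directions then reduce to bookkeeping on these spectral components, combined with an iterated form of Kishimoto's condition for the reverse direction.

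For the forward direction, fix \(g\in G\setminus\{1\}\), \(x\in A_{\alpha_g}\), \(D\in\Her(A)\), \(\varepsilon>0\), and view \(x\delta_g\in B\). Since \(A\subseteq B\) is aperiodic, choose \(a\in D^+_1\) and \(y\in A\) with \(\norm{a(x\delta_g)a-y}_B<\varepsilon/2\). Using \(a(x\delta_g)a=(ax\alpha_g(a))\delta_g\) and applying \(E_1\) yields \(\norm{y}_A<\varepsilon/2\); applying \(E_g\) then gives \(\norm{ax\alpha_g(a)}_A=\norm{E_g(a(x\delta_g)a)}_A<\varepsilon\). Since \(\norm{a\cdot x\cdot a}_{A_{\alpha_g}}=\norm{ax\alpha_g(a)}_A\), this is exactly Kishimoto's condition for \(x\in A_{\alpha_g}\).

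For the reverse direction, assume each \(A_{\alpha_g}\), \(g\neq 1\), is aperiodic, and take \(x\in B\), \(D\in\Her(A)\), \(\varepsilon>0\). First approximate \(x\) within \(\varepsilon/3\) by a finite algebraic sum \(y_0=\sum_{g\in F}a_g\delta_g\). Then for any \(a\in A^+_1\),
\[
  \norm{axa-aa_1 a}_B \le \varepsilon/3 + \sum_{g\in F\setminus\{1\}}\norm{a a_g\alpha_g(a)}_A,
\]
with the convention \(a_1=0\) if \(1\notin F\). So the task reduces to producing a single \(a\in D^+_1\) that makes each bimodule norm \(\norm{a\cdot a_g\cdot a}_{A_{\alpha_g}}=\norm{a a_g\alpha_g(a)}_A\) smaller than \(\varepsilon/(3\abs{F})\) simultaneously for every \(g\in F\setminus\{1\}\). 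This is the iterated form of Kishimoto's condition for a finite family of aperiodic normed \(A\)-bimodules, available from~\cite{Kwasniewski-Meyer:Aperiodicity}; once it is in hand, \(y\defeq aa_1 a\in A\) witnesses aperiodicity of the inclusion.

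The main obstacle lies in the iteration step just invoked: when aperiodicity of a later bimodule is applied inside the smaller hereditary subalgebra \(\overline{bAb}\), it is not a priori clear that the earlier bound \(\norm{b\cdot a_{g_i}\cdot b}\) is preserved, since a positive norm-one element of \(\overline{bAb}\) need not factor through \(b\) with controlled norm. The standard resolution is to set up the induction so that \(a\) is chosen to respect all relevant bounds at once; because this is a purely technical lemma already established in~\cite{Kwasniewski-Meyer:Aperiodicity}, I would simply cite it rather than reproduce the argument.
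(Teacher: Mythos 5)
Your proof is correct and follows essentially the same route as the paper: decompose \(B\) via its \(G\)\nb-grading, use the regular representation (equivalently, the contractive Fourier-coefficient maps \(E_g\)) to see that the quotient norm on the image of \(A_{\alpha_g}\) in \(B/A\) dominates the bimodule norm, and invoke \cite{Kwasniewski-Meyer:Aperiodicity}*{Lemma~4.2} for the simultaneous Kishimoto condition needed to pass from the fibres back to the dense algebraic sum. The paper merely packages these two directions into a general statement about dense sums of subbimodules plus the isometry claim, so no substantive difference remains.
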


\begin{proof}
  The main point is the following.  Let~\(X\) be a normed
  \(A\)\nb-bimodule and let~\(X_i\) for \(i\in I\) be subbimodules
  such that \(\sum X_i\) is dense in~\(X\).  Give~\(X_i\) the norm
  from~\(X\).  Then~\(X\) is aperiodic if and only if each~\(X_i\)
  is aperiodic.  This follows easily from
  \cite{Kwasniewski-Meyer:Aperiodicity}*{Lemma~4.2} (see also
  \cite{Kwasniewski-Meyer:Essential}*{Lemma~5.12}).  For \(g\in G\),
	let $u_g\in \Mult(B)$ be the corresponding unitary in the multplier algebra
	of the crossed product. Then \(A\cdot u_g \subseteq B\) is an \(A\)\nb-subbimodule that is
  isomorphic to~\(A_{\alpha_g}\) as a bimodule because
  \(a\cdot (b\cdot u_g)\cdot c = a b \alpha_g(c) \cdot u_g\) for all
  \(a,b,c\in A\).  We claim that this isomorphism remains isometric
  as a map to \(B/A\), that is, \(\norm{a\cdot u_g + b} \ge \norm{a}\)
  for all \(a,b\in A\), \(g\in G\setminus\{1\}\).  The proof of the
  claim uses the regular representation
  \(A\rtimes G \onto A\rtimes_\red G \subseteq \Bound(L^2(G,A))\)
  and that \(a\cdot  u_g\) and~\(b\) are orthogonal in \(L^2(G,A)\).  Since
  \(\sum_{g\in G\setminus\{1\}} A\cdot u_g\) is dense in~\(B/A\), the
  statement about sums of bimodules shows that the inclusion
  \(A\subseteq B\) is aperiodic if and only if each~\(A_{\alpha_g}\)
  for \(g\in G\setminus\{1\}\) is aperiodic.
\end{proof}

The same argument works when we replace \(A\rtimes G\) by
\(A\rtimes_\red G\) or any \emph{exotic crossed product}, that is, a
\(\Cst\)\nb-algebra~\(B\) with surjective \Star{}homomorphisms
\[
  A\rtimes G \to B \to A\rtimes_\red G
\]
that compose to the canonical quotient map
\(A\rtimes G \to A\rtimes_\red G\).  We could also allow twisted
actions or partial actions of~\(G\).  We turn right away to the most
general kind of group actions, namely, Fell bundles.

A \emph{Fell bundle~\(\A\) over a discrete group}~\(G\) is a family
of Banach spaces \((A_g)_{g\in G}\) with bilinear, associative
multiplication maps \(A_g \times A_h \to A_{g h}\) and
conjugate-linear, antimultiplicative involutions
\(A_g \to A_{g^{-1}}\) satisfying natural properties that turn the
unit fibre \(A\defeq A_1\) into a \(\Cst\)-algebra, each~\(A_g\)
into a Hilbert \(A\)\nb-bimodule, and the direct sum
\(\bigoplus_{g\in G} A_g\) into a \Star{}algebra.  The full and the
reduced section \(\Cst\)\nb-algebras, \(\Cst(G,\A)\) and
\(\Cst_\red(G,\A)\), are defined as \(\Cst\)-completions of
\(\bigoplus_{g\in G} A_g\).  A \emph{Fell bundle} over~\(G\) is
called \emph{aperiodic} if~\(A_g\) for \(g\in G\setminus\{1\}\) is
aperiodic
\cite{Kwasniewski-Szymanski:Pure_infinite}*{Definition~4.1}.  The
argument in the proof of Proposition
\ref{pro:aperiodic_group_action} shows that the Fell bundle is
aperiodic if and only if the inclusion of~\(A\) into \(\Cst(G,\A)\)
is aperiodic.  Here we may replace \(\Cst(G,\A)\) by any
\(\Cst\)\nb-algebra with surjective \Star{}homomorphisms
\(\Cst(G,\A) \onto B \onto \Cst_\red(G,\A)\) that compose to the
canonical \Star{}homomorphism \(\Cst(G,\A) \onto \Cst_\red(G,\A)\).

\begin{example}
  For any kind of generalised action of a group~\(G\), the ``crossed
  product''~\(B\) should be \(G\)\nb-graded, that is, it should come
  with closed linear subspaces \(B_g\subseteq B\) for \(g\in G\)
  that satisfy \(B_g \cdot B_h \subseteq B_{g h}\) and
  \(B_g^* = B_{g^{-1}}\) for \(g,h\in G\) and that \(\sum B_g\) is
  dense in~\(B\).  Then \((B_g)_{g\in G}\) with the multiplication
  and involution from~\(B\) is a Fell bundle over~\(G\).  And the
  maps \(B_g \to B\) form a Fell bundle representation.  So they
  induce a surjective \Star{}homomorphism
  \(\Cst(G,(B_g)_{g\in G}) \onto B\).  A \(G\)\nb-grading is called
  \emph{topological} if there is also a surjective
  \Star{}homomorphism \(B \onto \Cst_\red(G,\A)\) as above so that
  the composite \Star{}homomorphism
  \(\Cst(G,\A) \onto \Cst_\red(G,\A)\) is the canonical quotient map
  (see, for instance, \cite{Exel:Partial_dynamical}).  Crossed
  products for twisted (partial) actions of~\(G\) are
  \(G\)\nb-graded by construction.  Thus twisted (partial) actions
  define Fell bundles. The full and reduced crossed product
  \(\Cst\)\nb-algebras are naturally isomorphic to the full and
  reduced section \(\Cst\)\nb-algebras of the corresponding Fell
  bundle.
\end{example}

Now let~\(S\) be an inverse semigroup with unit \(1\in S\).  An
\emph{action of the inverse semigroup}~\(S\) on a
\(\Cst\)\nb-algebra~\(A\) by Hilbert bimodules consists of Hilbert
\(A\)\nb-bimodules~\(\Hilm_t\) for \(t\in S\) and unitary
multiplication maps
\(\mu_{t,u}\colon \Hilm_t \otimes_A \Hilm_u \to \Hilm_{t u}\) for
\(t,u\in S\), such that~\(\mu_{t,u}\) is associative,
\(\Hilm_1 = A\), and \(\mu_{1,t}\) and~\(\mu_{t,1}\) are the
canonical maps for all \(t\in S\) (see
\cite{Buss-Meyer:Actions_groupoids}*{Definition~4.7}).  Such an
action is equivalent to a saturated Fell bundle over~\(S\) and so it
has a full and a reduced section \(\Cst\)\nb-algebra
(see~\cites{Buss-Exel-Meyer:Reduced, Exel:noncomm.cartan,
  Kwasniewski-Meyer:Essential}).  We think of these as
generalisations of full and reduced crossed products for group
actions and denote them by \(A\rtimes S\) and \(A\rtimes_\red S\),
respectively.  In addition, we shall also use the essential crossed
product \(A\rtimes_\ess S\) defined
in~\cite{Kwasniewski-Meyer:Essential}, which in general is a
quotient of \(A\rtimes_\red S\).  We will discuss its definition
when it becomes relevant.

An important difference between crossed products for group and
inverse semigroup actions is that the images of~\(\Hilm_t\) in
\(A\rtimes S\) for \(t\in S\) are no longer linearly independent.
The intersection of~\(\Hilm_t\) with~\(A\) in \(A\rtimes S\) is
equal to the following ideal in~\(A\):
\begin{equation}
  \label{eq:Itu}
  I_{1,t} \defeq \overline{\sum_{v \le t,1} \s(\Hilm_v)}.
\end{equation}
Here \(\s(\Hilm_v)\) is the closed ideal generated by the inner
products \(\braket{x}{y}\) for \(x,y\in\Hilm_v\), and ``\(\le\)'' is
the standard partial order on \(S\) (\(v \le t,1\) means that \(v\) is an idempotent and \(v=tv\)).  Let
\[
  I_{1,t}^\bot \defeq \setgiven{x\in A}{x\cdot I_{1,t}=0}
\]
be the annihilator of the ideal~\(I_{1,t}\) in~\(A\).  If \(S=G\) is
a group, then for each \(t\in G\setminus\{1\}\) the sum in
\eqref{eq:Itu} is empty and hence \(I_{1,t}=0\) and
\(I_{1,t}^\bot=A\).  Thus \(A\rtimes_{\red} G=A\rtimes_\ess G\).  The
following proposition generalises
Proposition~\ref{pro:aperiodic_group_action} to inverse semigroup
actions:

\begin{proposition}[\cite{Kwasniewski-Meyer:Essential}*{Proposition~6.3,
    Definition 6.1}]
  \label{pro:aperiodic_isg_action}
  Let~\(\Hilm\) be an action of an inverse semigroup on a
  \(\Cst\)\nb-algebra~\(A\).  Let~\(B\) be a \(\Cst\)\nb-algebra
  with surjective \Star{}homomorphisms
  \[
    A\rtimes S \onto B\onto A\rtimes_\ess S
  \]
  that compose to the quotient map
  \(A\rtimes S \onto A\rtimes_\ess S\).  The inclusion
  \(A\subseteq B\) is aperiodic if and only if the
  \(A\)\nb-bimodules \(\Hilm_t\cdot I_{1,t}^\bot\) for \(t\in S\)
  are aperiodic.  In this case, we call the \emph{action~\(\Hilm\)
    aperiodic}.
\end{proposition}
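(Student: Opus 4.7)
The plan is to adapt the proof of Proposition~\ref{pro:aperiodic_group_action} to the inverse semigroup setting, using the same overarching strategy: by \cite{Kwasniewski-Meyer:Aperiodicity}*{Lemma~4.2}, a normed $A$\nb-bimodule is aperiodic if and only if it is the closed linear span of aperiodic subbimodules, and subbimodules of aperiodic bimodules are automatically aperiodic. I would apply this to the bimodule $X = B/A$ with the subbimodules coming from the images of $\Hilm_t \cdot I_{1,t}^\bot$ for $t \in S$.

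The first task is to identify the correct subbimodules. By the construction of the full crossed product, $\sum_{t \in S} \Hilm_t$ has dense image in $A\rtimes S$ and hence in $B$. Unlike the group case, the summands are not linearly independent: inside $B$, the intersection $\Hilm_t \cap A$ is precisely the ideal $I_{1,t}$ from \eqref{eq:Itu}, which captures the contributions of the idempotent pieces $v \le t,1$. Multiplying $\Hilm_t$ on the right by $I_{1,t}^\bot$ produces a subbimodule of $\Hilm_t$ that is disjoint from this ``diagonal'' part, while $\Hilm_t \cdot I_{1,t} \subseteq A$ is absorbed into the $A$\nb-part of $B$. Together with $A$, the images of $\Hilm_t \cdot I_{1,t}^\bot$ for $t \in S$ then span a dense subspace of $B$; the subtlety here, absent in the group case, is that this requires the passage to $A \rtimes_\ess S$, in which the ``singular'' contributions coming from the gap between $A$ and the essential ideal $I_{1,t} + I_{1,t}^\bot$ have been quotiented out (see \cite{Kwasniewski-Meyer:Essential}).

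The main obstacle, paralleling the orthogonality step in the proof of Proposition~\ref{pro:aperiodic_group_action}, is to show that the natural map $\Hilm_t \cdot I_{1,t}^\bot \to B/A$ is \emph{isometric}. In the group case, this uses the regular representation of $A\rtimes G$ on the Hilbert module $L^2(G,A)$. Here the analogous device is the canonical generalised expectation $E_\ess \colon A\rtimes_\ess S \to \Locmult(A)$ constructed in \cite{Kwasniewski-Meyer:Essential}, precomposed with the surjection $B \onto A\rtimes_\ess S$. This map is the identity on $A$, and it vanishes on $\Hilm_t \cdot I_{1,t}^\bot$: any contribution of $\Hilm_t$ to $\Locmult(A)$ must factor through the ideals $\s(\Hilm_v) \subseteq I_{1,t}$ with $v \le t,1$, and by the very definition of $I_{1,t}^\bot$ such contributions are killed after multiplying on the right by $I_{1,t}^\bot$. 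Granted this vanishing, the inequality $\norm{x + a} \ge \norm{E_\ess(x + a)} = \norm{a}$ for $x \in \Hilm_t \cdot I_{1,t}^\bot$ and $a \in A$ yields the claimed isometry.

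With isometric embeddings of the $A$\nb-bimodules $\Hilm_t \cdot I_{1,t}^\bot$ into $B/A$ and density of their sum established, the decomposition lemma gives both implications of the proposition at once: $B/A$ is aperiodic if and only if each $\Hilm_t \cdot I_{1,t}^\bot$ is aperiodic.
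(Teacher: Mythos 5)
The paper does not actually prove this proposition; it imports it from \cite{Kwasniewski-Meyer:Essential}*{Proposition~6.3}, and the closest argument written out here is the group case, Proposition~\ref{pro:aperiodic_group_action}, which you are mimicking. Your overall frame --- reduce to the subbimodules \(\Hilm_t\cdot I_{1,t}^\bot\) via \cite{Kwasniewski-Meyer:Aperiodicity}*{Lemma~4.2} and an embedding argument, with the canonical generalised expectation \(E\colon B\to\Locmult(A)\) replacing the regular representation --- is the right starting point. A small but real flaw already appears in the embedding step: the inequality \(\norm{x+a}\ge\norm{E(x+a)}=\norm{a}\) bounds the \emph{wrong} component; it shows nothing about \(\norm{x}_{B/A}\). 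To get the map \(\Hilm_t\cdot I_{1,t}^\bot\to B/A\) bounded below you should instead apply \(E\) to \((x+a)^*(x+a)\), using that \(E\) is an \(A\)\nb-bimodule map vanishing on \(\Hilm_t\cdot I_{1,t}^\bot\) and the identity on \(\braket{x}{x}\in A\), which gives \(\norm{x+a}^2\ge\norm{\braket{x}{x}+a^*a}\ge\norm{x}^2\); alternatively, combine your inequality with the triangle inequality to get the factor-\(2\) estimate as in Lemma~\ref{lem:ker_E_max_aperiodic}, which suffices for transporting aperiodicity even though it is not an isometry.

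The genuine gap is the density claim. It is false in general that \(A\) together with the images of \(\Hilm_t\cdot I_{1,t}^\bot\) spans a dense subspace of \(B\) --- not for \(B=A\rtimes S\), and not even for \(B=A\rtimes_\ess S\). What is dense is \(\sum_{t\in S}\Hilm_t\). Inside \(B\) one has \(\Hilm_t\cap A=I_{1,t}=\Hilm_t\cdot I_{1,t}\), but \(I_{1,t}+I_{1,t}^\bot\) is only an \emph{essential} ideal, so \(\Hilm_t\cdot I_{1,t}+\Hilm_t\cdot I_{1,t}^\bot\) need not be dense in \(\Hilm_t\); and passing to \(A\rtimes_\ess S\) does not repair this, because each \(\Hilm_t\) still embeds faithfully there (concretely, for a non-Hausdorff groupoid and a bisection \(U\) meeting \(\overline{X}\setminus X\), the spaces \(\Cont_0(U\cap X)\) and \(\Cont_0(U\setminus\overline{X})\) do not span \(\Cont_0(U)\); compare Example~\ref{exa:groupoid_Cstar_aperiodic}). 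So your argument only proves that the closed span of these subbimodules in \(B/A\) is aperiodic, not that \(B/A\) is. The missing step --- the actual content of \cite{Kwasniewski-Meyer:Essential}*{Lemma~6.2 and Proposition~6.3} --- is to show that aperiodicity of \(\Hilm_t\cdot I_{1,t}^\bot\) forces Kishimoto's condition for \emph{every} \(x\in\Hilm_t\) modulo \(A\), including the ``singular'' part. This is done by a case analysis on hereditary subalgebras rather than by density: every \(D\in\Her(A)\) contains a non-zero hereditary subalgebra \(D'\) with either \(D'\subseteq I_{1,t}\) (then \(axa\in I_{1,t}\cdot\Hilm_t\cdot I_{1,t}\subseteq A\), so its class in \(B/A\) vanishes) or \(D'\subseteq I_{1,t}^\bot\) (then a first compression pushes \(x\) into \(\Hilm_t\cdot I_{1,t}^\bot\), where the hypothesis applies). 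Without some such argument the ``if'' direction of the proposition is not established.
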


We call a \(\Cst\)\nb-algebra~\(B\) as in
Proposition~\ref{pro:aperiodic_isg_action} an \emph{exotic crossed
  product} for the action of~\(S\) on~\(A\) (see \cite{Kwasniewski-Meyer:Essential}*{Section 4.2}).

Next we briefly discuss \(\Cst\)\nb-algebras associated to étale
groupoids.  A topological groupoid~\(G\) is étale if its range and
source maps \(r\) and~\(s\) are local homeomorphisms.  An open set
\(U\subseteq G\) is a \emph{bisection} of~\(G\) if \(r|_U\)
and~\(s|_U\) are injective.  The bisections in~\(G\) form a unital
inverse semigroup \(\Bis(G)\), where the space of units is the unit
bisection.

\begin{example}
  \label{exa:groupoid_Cstar_aperiodic}
  Let~\(G\) be an étale groupoid with locally compact Hausdorff object
  space~\(X\).  The full groupoid \(\Cst\)\nb-algebra \(\Cst(G)\)
  contains \(\Cont_0(X)\) as a \(\Cst\)\nb-subalgebra.  By
  construction, the linear span of \(\Cont_0\)\nb-functions on
  bisections in~\(G\) is dense in \(\Cst(G)\).  For each bisection
  \(U\subseteq G\), the corresponding subspace
  \(\Cont_0(U) \subseteq \Cst(G)\) is a Hilbert
  \(\Cont_0(X)\)\nb-bimodule.  The bimodule structure is
  \((f_1\cdot f_2\cdot f_3)(g) \defeq f_1(r(g)) f_2(g) f_3(s(g))\)
  for all \(f_1,f_3\in \Cont_0(X)\), \(f_2\in\Cont_0(U)\),
  \(g\in G\).  The Banach spaces \(\Hilm_U\defeq \Cont_0(U)\) form
  an action of \(\Bis(G)\) on \(\Cont_0(X)\) by Hilbert bimodules,
  whose full section \(\Cst\)\nb-algebra is naturally isomorphic to
  \(\Cst(G)\).  Given a bisection \(U\subseteq G\), the submodule
  \(\Hilm_U\cdot I_{1,U}^\bot\) in
  Proposition~\ref{pro:aperiodic_isg_action} is
  \(\Cont_0(U\setminus \overline{X})\).  Therefore,
  Proposition~\ref{pro:aperiodic_isg_action} says that the inclusion
  \(\Cont_0(X) \subseteq \Cst(G)\) is aperiodic if and only if the
  \(\Cont_0(X)\)-bimodules \(\Cont_0(U\setminus \overline{X})\) with
  the supremum norm are aperiodic for all bisections~\(U\) of~\(G\).
  It is easy to see that this happens if and only if there is no
  non-empty open subset \(U\subseteq G\setminus X\) with
  \(r|_U = s|_U\), if and only if the set of \(g\in G\) with
  \(r(g) \neq s(g)\) is dense in \(G\setminus \overline{X}\).  The
  same argument works if we replace the full groupoid
  \(\Cst\)\nb-algebra by the reduced one or by the essential one
  defined in~\cite{Kwasniewski-Meyer:Essential}.
\end{example}

We name the condition that characterises aperiodicity in the above
example:

\begin{definition}[\cite{Kwasniewski-Meyer:Essential}*{Definition~2.20}]
  \label{def:topologically_free_groupoid}
  An étale groupoid~\(G\) with unit space \(X\subseteq G\) is
  \emph{topologically free} if there is no non-empty open subset
  \(U\subseteq G\setminus X\) with \(r|_U = s|_U\).
\end{definition}

\begin{remark}
  Topological freeness is weaker than similar popular conditions
  like being effective or topologically principal (see
  \cite{Kwasniewski-Meyer:Essential}*{Section~2.4}).
\end{remark}

Let~\(G\) be an étale groupoid with locally compact and Hausdorff unit
space~\(X\).  A \emph{Fell bundle over the groupoid~\(G\)} is an
upper-semicontinous bundle \(\A=(A_\gamma)_{\gamma\in G}\) of Banach
spaces equipped with a continuous involution \(^*\colon \A\to \A\)
and a continuous partially defined multiplication
\({\cdot}\colon \setgiven{(a,b)\in \A\times \A} {a\in A_{\gamma_1},\
  b \in A_{\gamma_2},\ (\gamma_1,\gamma_2)\in G^{(2)}} \to \A\) that
satisfy a number of natural properties (see
\cites{BussExel:Fell.Bundle.and.Twisted.Groupoids,
  Kwasniewski-Meyer:Essential} for details).  Then
\(A\defeq\Cont_0(X,\A|_X)\) is a \(C_0(X)\)-\(\Cst\)-algebra and the
space of \(\Cont_0\)\nb-sections \(\Cont_0(U,\A|_U)\) for a
bisection \(U\in \Bis( G)\) becomes a Hilbert \(A\)\nb-bimodule.
These Hilbert bimodules form a Fell bundle over the inverse
semigroup \(\Bis(G)\).  This Fell bundle is saturated when~\(\A\) is
saturated.  In general, we change the construction by letting~\(S\)
be the family of all Hilbert subbimodules of \(\Cont_0(U,\A|_U)\)
for all \(U\in \Bis(G)\), cf.  \cite{Kwasniewski-Meyer:Essential}*{Lemma 7.3}.  This defines a saturated Fell bundle by
construction, which is the same as an action by Hilbert bimodules
on~\(A\).  And \cite{Kwasniewski-Meyer:Essential}*{Proposition 7.6
  and~7.9 and Definition~7.12} give natural isomorphisms
\begin{equation}
  \label{eq:groupoid_isomorphisms}
  \Cst(G,\A)\cong A\rtimes S,\qquad
  \Cst_\red(G,\A)\cong A\rtimes_{\red} S,\qquad
  \Cst_\ess(G,\A)\cong A\rtimes_\ess S
\end{equation}
between the corresponding full, reduced and essential
\(\Cst\)\nb-algebras.

\begin{definition}
  We will call a Fell bundle \(\A=(A_\gamma)_{\gamma\in G}\) over an
  étale groupoid~\(G\) an \emph{action of the groupoid}~\(G\) on
  \(\Cont_0(X,\A|_X)\).  An \emph{exotic crossed product} for~\(\A\)
  is a \(\Cst\)\nb-algebra \(B\) with surjective
  \Star{}homomorphisms \(\Cst(G,\A) \onto B\onto \Cst_\ess(G,\A)\)
  that compose to the quotient map
  \(\Cst(G,\A)\onto \Cst_\ess(G,\A)\).
\end{definition}

\begin{definition}
  \label{def:groupoid_action_aperiodic}
  A \emph{Fell bundle \(\A=(A_\gamma)_{\gamma\in G}\) over an étale
    groupoid~\(G\)} is \emph{aperiodic} if the corresponding inverse
  semigroup action described above is aperiodic.
\end{definition}

Proposition~\ref{pro:aperiodic_isg_action} implies that the
inclusion of \(\Cont_0(X,\A|_X)\) into any exotic crossed product
for a Fell bundle~\(\A\) over~\(G\) is aperiodic if and only if the
action \(\A\) is aperiodic.  All results about actions of étale
groupoids below are proven by reducing the statement to inverse
semigroup actions as above.

\section{Aperiodic inclusions and pseudo-expectations}
\label{sec:aperiodic_pseudo-expectation}

In this section we improve
\cite{Kwasniewski-Meyer:Essential}*{Theorem~5.28}, the main theorem
about general aperiodic inclusions
in~\cite{Kwasniewski-Meyer:Essential}, by proving some results about
the inclusion into the injective hull of a \(\Cst\)\nb-algebra.
Namely, any aperiodic inclusion admits a unique pseudo-expectation,
and all pseudo-expectations satisfy a technical condition that is
needed in \cite{Kwasniewski-Meyer:Essential}*{Theorem~5.28}.  Thus
the conclusions of \cite{Kwasniewski-Meyer:Essential}*{Theorem~5.28}
hold for all aperiodic inclusions together with their unique
pseudo-expectation.

\begin{lemma}
  \label{lem:largest_aperiodic_subbimodule}
  Any normed \(A\)\nb-bimodule~\(X\) contains a largest aperiodic
  \(A\)\nb-subbimodule.  For any \(\Cst\)\nb-inclusion
  \(A\subseteq B\) the \(\Cst\)-algebra~\(B\) contains a largest
  two-sided ideal which is aperiodic as an \(A\)\nb-bimodule.
\end{lemma}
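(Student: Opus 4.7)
The plan is to take, in each case, the closed linear span of all candidate subobjects and show the resulting object is itself aperiodic. The key technical tool is the following fact, which is already invoked in the proof of Proposition~\ref{pro:aperiodic_group_action}: if $Y$ is a normed $A$\nb-bimodule and $(Y_i)_{i\in I}$ is a family of aperiodic $A$\nb-subbimodules of $Y$ whose sum $\sum_i Y_i$ is dense in $Y$, then $Y$ itself is aperiodic. This is a direct consequence of \cite{Kwasniewski-Meyer:Aperiodicity}*{Lemma~4.2} (see also \cite{Kwasniewski-Meyer:Essential}*{Lemma~5.12}). With this in hand, the lemma follows by choosing the appropriate family to sum.

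For the first assertion, let $\mathcal{F}$ denote the family of all aperiodic $A$\nb-subbimodules of~$X$ and set
\[
  Y \defeq \overline{\sum_{Z\in\mathcal{F}} Z}.
\]
By continuity of the left and right $A$\nb-actions, $Y$ is a closed $A$\nb-subbimodule of~$X$, and the sum $\sum_{Z\in\mathcal{F}} Z$ is dense in~$Y$ with each summand aperiodic. Applying the density principle above gives that $Y$ is aperiodic, and by construction $Y$ contains every aperiodic $A$\nb-subbimodule of~$X$, so it is the largest such.

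For the second assertion, I would repeat the same construction but restricted to two-sided ideals. Let $\mathcal{J}$ be the collection of two-sided ideals $J \idealin B$ that are aperiodic as $A$\nb-bimodules, and put
\[
  J_0 \defeq \overline{\sum_{J\in\mathcal{J}} J}.
\]
The closed linear span of (closed) two-sided ideals in a \(\Cst\)\nb-algebra is again a closed two-sided ideal, so $J_0 \idealin B$. Exactly the same density argument now shows that $J_0$ is aperiodic as an $A$\nb-bimodule, and it contains every member of $\mathcal{J}$ by construction, hence is the largest such ideal.

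I do not foresee a genuine obstacle: beyond citing the density principle for aperiodicity, the only additional point is the elementary fact that closed linear spans of two-sided ideals in a \(\Cst\)\nb-algebra remain two-sided ideals, which is immediate from continuity of multiplication. Note also that one need not restrict $\mathcal{F}$ (resp.\ $\mathcal{J}$) to closed subbimodules (resp.\ closed ideals), since the density principle itself implies that the closure of any aperiodic $A$\nb-subbimodule is aperiodic.
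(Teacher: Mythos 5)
Your proposal is correct and follows essentially the same route as the paper, which simply observes that the closed linear span of a family of aperiodic subbimodules is again aperiodic (citing \cite{Kwasniewski-Meyer:Aperiodicity}*{Lemma~4.2} or \cite{Kwasniewski-Meyer:Essential}*{Lemma~5.12}) and applies this to the family of all aperiodic subbimodules, respectively all aperiodic ideals. Your extra remarks about closed spans of ideals being ideals and about not needing closedness of the summands are correct but routine.
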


\begin{proof}
  This follows from
  \cite{Kwasniewski-Meyer:Aperiodicity}*{Lemma~4.2} or
  \cite{Kwasniewski-Meyer:Essential}*{Lemma~5.12} because the closed
  linear span of a family of aperiodic subbimodules is again
  aperiodic.
\end{proof}

The largest aperiodic ideal \(\Null \subseteq B\) for a
\(\Cst\)\nb-inclusion \(A\subseteq B\) plays an important role.
By~\cite{Kwasniewski-Meyer:Essential}*{Theorem~5.17}, if the
inclusion \(A\subseteq B\) is aperiodic, then~\(\Null\) is the
unique ideal for
which \(A\cap \Null = 0\) and the induced inclusion
\(A \to B/\Null\) detects ideals in the following sense:

\begin{definition}
  Let~\(A\) be a \(\Cst\)\nb-subalgebra of a
  \(\Cst\)\nb-algebra~\(B\).  We say that~\(A\) \emph{detects
    ideals} in~\(B\) if \(J\cap A=0\) implies \(J=0\) for every
  ideal~\(J\) in~\(B\).  (Some authors then say that the
  \(\Cst\)\nb-inclusion \(A\subseteq B\) is essential
  (see~\cite{Pitts-Zarikian:Unique_pseudoexpectation}) or that it
  has the ideal intersection property.)
\end{definition}

Assume for a moment that our aperiodic inclusion \(A\subseteq B\)
carries a conditional expectation \(E\colon B\to A\).
Then~\(\Null\) is equal to the largest two-sided ideal contained in
the kernel of~\(E\) (see~\cite{Kwasniewski-Meyer:Essential}*{Theorem
  5.28}).  This applies, in particular, to all crossed products for
group actions.  In this case, the quotient \((A\rtimes G)/\Null\) is
the reduced crossed product \(A\rtimes_\red G\) because~\(E\)
induces a faithful conditional expectation \(A\rtimes_\red G\to A\).
A conditional expectation also exists for the inclusion
\(\Cont_0(X,\A|_X) \subseteq \Cst(G,\A)\) if~\(G\) is a Hausdorff,
étale, locally compact groupoid.  Once again, \(\Cst(G,\A)/\Null\)
is the reduced crossed product \(\Cst_\red(G,\A)\).  Thus
\(A\subseteq A\rtimes_\red G\) and
\(\Cont_0(X,\A|_X) \subseteq \Cst_\red(G,\A)\) detect ideals if the
underlying action is aperiodic.  The situation is different,
however, for general inverse semigroup actions or Fell bundles over
non-Hausdorff groupoids.  They do not admit a genuine conditional
expectation.  The way out is to consider ``generalised''
expectations, which take values in a larger \(\Cst\)\nb-algebra
\(\tilde{A}\supseteq A\):

\begin{definition}
  A \emph{generalised expectation} for a \(\Cst\)\nb-inclusion
  \(A\subseteq B\) consists of another \(\Cst\)\nb-inclusion
  \(A\subseteq \tilde{A}\) and a completely positive, contractive
  map \(E\colon B \to \tilde{A}\) that restricts to the identity map
  on~\(A\).
\end{definition}

Any generalised expectation is an \(A\)\nb-bimodule map by
\cite{Kwasniewski-Meyer:Essential}*{Lemma~3.2}.

The identity map on~\(B\) is a generalised expectation for any
\(\Cst\)\nb-inclusion, and it cannot tell us anything interesting.
Therefore, an extra condition on a generalised expectation is
needed.  The main theorem for general \(\Cst\)\nb-inclusions
in~\cite{Kwasniewski-Meyer:Essential} requires a generalised
expectation which is ``supportive'' (see Definition
\ref{def:supportive_expectation} below).  And it asserts that~\(A\)
supports~\(B/\Null\) in the following sense:

\begin{definition}
  Let~\(B^+\) be the set of positive elements in~\(B\).  We equip
  \(B^+\setminus\{0\}\) with the \emph{Cuntz preorder}~\(\precsim\)
  introduced in~\cite{Cuntz:Dimension_functions}: for
  \(a, b\in B^+\setminus\{0\}\), we write \(a \precsim_B b\) and
  say that~\(a\) \emph{supports}~\(b\) (in~\(B\)) if, for every
  \(\varepsilon>0\), there is \(x \in B\) with
  \(\norm{a-x^* b x} <\varepsilon\).  We say that~\(A\)
  \emph{supports}~\(B\) if for every \(b\in B^+\setminus\{0\}\)
  there is \(a\in A^+\setminus\{0\}\) with \(a \precsim_B b\).
\end{definition}

\begin{definition}
  A \emph{pseudo-expectation} for a \(\Cst\)\nb-inclusion
  \(A\subseteq B\) is a generalised expectation
  \(E\colon B\to I(A)\) taking values in Hamana's injective hull
  (see~\cite{Hamana:Injective-Envelope-Cstar}).
\end{definition}

The injectivity of~\(I(A)\) implies that any \(\Cst\)\nb-inclusion
has at least one pseudo-expectation.  Having a unique
pseudo-expectation is an important structural property, which has
been advocated, in particular, by Pitts (see \cites{Pitts:Regular_I,
  Pitts:Regular_II, Pitts-Zarikian:Unique_pseudoexpectation,
  Zarikian:Unique_expectations}).

Now we explain how we are going to improve the main theorem about
general aperiodic inclusions in~\cite{Kwasniewski-Meyer:Essential}.
We show, first, that any aperiodic inclusion has a unique
pseudo-expectation~\(E\); secondly, that~\(E\) is supportive; and,
thirdly, that the largest aperiodic bimodule is \(\ker E\).  When we
put this information into
\cite{Kwasniewski-Meyer:Essential}*{Theorem~5.28}, then we get the
following theorem:

\begin{theorem}
  \label{thm:aperiodicity_implies_unique_expectation}
  Let \(A\subseteq B\) be an aperiodic \(\Cst\)\nb-inclusion.  Then
  there is exactly one pseudo-expectation \(E\colon B\to I(A)\) and \(\ker E\) is  the largest aperiodic \(A\)-subbimodule in~\(B\).
  Let~\(\Null\) be the largest two-sided ideal contained in
  \(\ker E\).  Then
  \begin{enumerate}
  \item \label{enu:aperiodic_consequences1}%
    for every \(b\in B^+\) with \(b\notin \Null\), there is
    \(a\in A^+\setminus\{0\}\) with \(a \precsim_B b\); in
    particular, \(A\) supports \(B/\Null\);
  %\item \label{enu:aperiodic_consequences3}%

  \item \label{enu:aperiodic_consequences4}%
    if \(J\subseteq B\) is an ideal with \(J\cap A = 0\), then
    \(J\subseteq \Null\); in particular, \(A\) detects ideals
    in~\(B/\Null\), and \(B/\Null\) is the unique quotient of~\(B\)
    with this property;
  \item \label{enu:aperiodic_consequences5}%
    \(B\) is simple if and only if \(\Null=0\) and
    \(A\subseteq B I B\) for any non-zero ideal~\(I\) in~\(A\);
  \item \label{enu:aperiodic_consequences6}%
    if~\(B\) is simple, then~\(B\) is purely infinite if and only if
    all elements of \(A^+\setminus\{0\}\) are infinite in~\(B\).
 % \item \label{enu:aperiodic_consequences7}% .
  \end{enumerate}
\end{theorem}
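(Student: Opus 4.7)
My plan follows the road-map sketched in the paragraph just before the theorem: establish uniqueness of the pseudo-expectation~\(E\) and identify \(\ker E\) with the largest aperiodic \(A\)\nb-subbimodule; show that~\(E\) is supportive; then invoke \cite{Kwasniewski-Meyer:Essential}*{Theorem~5.28} to extract the numbered conclusions.

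\emph{Uniqueness and \(\ker E\).} Given two pseudo-expectations \(E_1,E_2\colon B\to I(A)\), the difference \(F\defeq E_1-E_2\) is a bounded \(A\)\nb-bimodule map vanishing on~\(A\) by \cite{Kwasniewski-Meyer:Essential}*{Lemma~3.2}. For any \(b\in B\), aperiodicity yields \(a\in D^+_1\) and \(y\in A\) with \(\norm{aba-y}<\varepsilon\), so that \(aF(b)a = F(aba)=F(aba-y)+F(y)=F(aba-y)\) has norm below \(2\varepsilon\). Applied to~\(E\) itself and using \(E(axa)=0\) for \(x\in\ker E\), the same manipulation gives \(\norm{y}=\norm{E(y-axa)}<\varepsilon\) and hence \(\norm{axa}<2\varepsilon\), so \(\ker E\) is aperiodic as an \(A\)\nb-bimodule. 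Conversely, any aperiodic subbimodule \(X\subseteq B\) lies in \(\ker E\), because for \(x\in X\) the inequality \(\norm{aE(x)a}=\norm{E(axa)}\le\norm{axa}\) can be driven to zero. Both statements—\(F\equiv 0\) for uniqueness and \(X\subseteq\ker E\) for maximality—reduce to a single \emph{key lemma}: any \(z\in I(A)\) satisfying Kishimoto's condition in the \(A\)\nb-bimodule~\(I(A)\) must vanish. I would prove this via Hamana's essentiality of \(A\subseteq I(A)\)—which forces~\(A\) to detect ideals in \(I(A)\)—combined with Akemann--Anderson--Pedersen excising \cite{Akemann-Anderson-Pedersen:Excising}: for \(z\neq 0\), choose a pure state~\(\omega\) on~\(A\) whose canonical extension to \(I(A)\) does not annihilate~\(z\), and exhibit an excising net \((a_\lambda)\subseteq A^+_1\) along which \(\norm{a_\lambda z a_\lambda}\) stays bounded below, contradicting Kishimoto's condition.

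\emph{Supportivity and wrap-up.} For supportivity I would apply excising once more, now on~\(B\). Given \(b\in B^+\) with \(E(b)\neq 0\), choose a pure state~\(\omega\) on~\(A\) whose extension~\(\tilde\omega\) to \(I(A)\) satisfies \(\tilde\omega(E(b))>0\); then the state \(\tilde\omega\circ E\) on~\(B\) admits an excising net \((a_\lambda)\subseteq A^+_1\) with \(\norm{a_\lambda b a_\lambda-\tilde\omega(E(b))\,a_\lambda^2}\to 0\), which witnesses \(a_\lambda^2\precsim_B b\) with \(a_\lambda^2\in A^+\setminus\{0\}\). With~\(E\) now established as the unique, supportive pseudo-expectation whose kernel is the largest aperiodic subbimodule, the definition of~\(\Null\) as the largest two\nb-sided ideal contained in \(\ker E\) matches the hypotheses of \cite{Kwasniewski-Meyer:Essential}*{Theorem~5.28}, whose conclusions for supportive generalised expectations are precisely the four numbered assertions of the theorem.

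The \textbf{main obstacle} is the key lemma of the first phase. Two subtleties must be negotiated: the choice of a pure state~\(\omega\) on~\(A\) whose \emph{extension} to \(I(A)\) detects the given \(z\in I(A)\)—which may require either a weak\nb-\(*\) density argument for such extensions or passage to a spectral cut\nb-off of~\(z\)—and the correct variant of excising producing the net \((a_\lambda)\) inside \(A^+_1\) rather than \(I(A)^+_1\), an issue tied to how~\(A\) sits inside \(I(A)\) relative to~\(\omega\). Once that lemma is in hand, everything else is mechanical bookkeeping driven by the aperiodicity manipulation \(aba\approx y\in A\) and the cited Theorem~5.28.
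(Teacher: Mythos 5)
Your overall architecture matches the paper's: reduce the uniqueness of \(E\) and the identification of \(\ker E\) to the single statement that no non-zero element of \(I(A)\) satisfies Kishimoto's condition (equivalently, that \(A\subseteq I(A)\) is \emph{anti-aperiodic}), establish that \(E\) is supportive, and then quote \cite{Kwasniewski-Meyer:Essential}*{Theorem~5.28}. Your bimodule manipulations for uniqueness and for \(X\subseteq\ker E\) are correct and are essentially Proposition~\ref{pro:two_gen_expectations} and Lemma~\ref{lem:ker_E_max_aperiodic}. The genuine gap is your route to the key lemma. You propose to deduce it from essentiality of \(A\subseteq I(A)\) (ideal detection) together with excision of pure states. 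This cannot work, because essentiality of a \(\Cst\)\nb-inclusion does not imply anti-aperiodicity: take \(A=\Cont(\T)\) inside \(\tilde A=\Cont(\T)\rtimes_\red\Z\) for an irrational rotation; then \(\tilde A\) is simple, so \(A\) trivially detects ideals in \(\tilde A\), yet \(A\cdot u\) is a non-zero aperiodic \(A\)\nb-subbimodule of \(\tilde A\) because the rotation is topologically non-trivial (Theorem~\ref{the:top_free_vs_aperiodic}). So any proof of the key lemma must use the injectivity of \(I(A)\) in a way that goes beyond mere essentiality. The paper does this via \emph{rigidity}: if \(X\subseteq I(A)\) is aperiodic, the quotient map \(A\to I(A)/X\) is completely isometric (Lemmas \ref{lem:isometric_inclusion}--\ref{lem:completely_isometric_inclusion}); injectivity extends \(A\hookrightarrow I(A)\) to a completely contractive map \(I(A)/X\to I(A)\), and rigidity of the injective envelope forces the composite \(I(A)\onto I(A)/X\to I(A)\) to be the identity, whence \(X=0\). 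Nothing playing this role appears in your sketch.

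There is also a quantifier problem in the excision step itself, independent of the counterexample. Excising a pure state \(\omega\) on \(A\) controls \(a_\lambda x a_\lambda\) only for \(x\in A\); to get \(\liminf\norm{a_\lambda z a_\lambda}\ge\abs{\tilde\omega(z)}\) for \(z\in I(A)\) you need an extension \(\tilde\omega\) compatible with the net, and even then you only rule out Kishimoto's condition \emph{along that particular net}, whereas refuting it requires \(\norm{a z a}\ge\delta\) for \emph{all} \(a\in D^+_1\) for some \(D\in\Her(A)\). The passage from ``all \(a\) with \(\omega(a)=1\)'' to ``all \(a\in D^+_1\)'' is exactly what \cite{Kwasniewski-Meyer:Aperiodicity}*{Lemma~2.9} achieves for elements of \(A\) via a spectral cut-off, and that device is unavailable for \(z\in I(A)\setminus A\). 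Your supportivity step has analogous defects: \(\tilde\omega\circ E\) need not be a pure state on \(B\), so the Akemann--Anderson--Pedersen theorem does not excise it, and the excising net would in any case have to be produced inside \(A\), which amounts to the unique-extension property of \(\omega\) and need not hold. In the paper, supportivity is instead an easy formal consequence of anti-aperiodicity (Proposition~\ref{pro:aper_bimodules_supportive}), so once the key lemma is proved correctly that entire detour is unnecessary.
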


\begin{remark}
  \label{rem:ess_defined_vs_pseudo-expectation}
  The essential crossed products
  in~\cite{Kwasniewski-Meyer:Essential} are defined using a
  generalised expectation into the local multiplier algebra,
  \(E\colon B\to \Locmult(A)\).  There is a canonical embedding
  \(\iota\colon \Locmult(A) \hookrightarrow I(A)\) by
  \cite{Frank:Injective_local_multiplier}*{Theorem~1}.  Thus
  generalised expectations into \(\Locmult(A)\) become
  pseudo-expectations as well, and this change of view point affects
  neither the kernel \(\ker E\) nor the largest two-sided ideal
  contained in \(\ker E\).  As a result, if \(B = A\rtimes S\) for
  an inverse semigroup action, then \(B/\Null\) is the essential
  crossed product~\(A\rtimes_\ess S\) defined
  in~\cite{Kwasniewski-Meyer:Essential}.  If \(B = \Cst(G,\A)\)
  is a section \(\Cst\)\nb-algebra for a Fell bundle over an étale
  groupoid~\(G\), then \(B/\Null\) is the essential section
  \(\Cst\)\nb-algebra~\(\Cst_\ess(G,\A)\) as
  in~\cite{Kwasniewski-Meyer:Essential}.  Thus
  Theorem~\ref{thm:aperiodicity_implies_unique_expectation} contains
  criteria for \(\Cst\)\nb-algebras of the form \(A\rtimes_\ess S\)
  and \(\Cst_\ess(G,\A)\) to be simple or purely infinite.  In fact,
  these criteria are already proven
  in~\cite{Kwasniewski-Meyer:Essential}, using the generalised
  expectation \(E\colon B\to \Locmult(A)\).
\end{remark}

The proof of
Theorem~\ref{thm:aperiodicity_implies_unique_expectation} will
occupy the rest of this section.  The following concept is crucial
for the proof:

\begin{definition}
  A \(\Cst\)\nb-inclusion \(A\subseteq \tilde{A}\) is called
  \emph{anti-aperiodic} if there are no non-zero aperiodic
  \(A\)\nb-subbimodules of~\(\tilde{A}\).
\end{definition}

The second and more difficult part of the proof will show that the
inclusion \(A \hookrightarrow I(A)\) is anti-aperiodic.  The first
part consists of several rather easy results about generalised
expectations \(B\to\tilde{A}\) when the inclusion
\(A\subseteq \tilde{A}\) is anti-aperiodic.

\begin{proposition}
  \label{pro:two_gen_expectations}
  Let \(B\supseteq A\subseteq \tilde{A}\) be \(\Cst\)\nb-inclusions
  such that \(A\subseteq B\) is aperiodic and
  \(A\subseteq \tilde{A}\) is anti-aperiodic.  Then there is at most
  one generalised expectation \(B\to\tilde{A}\).
\end{proposition}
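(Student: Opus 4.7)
The plan is to consider two generalised expectations $E_1, E_2 \colon B \to \tilde{A}$ and prove that their difference $\Phi \defeq E_1 - E_2$ is zero. Both $E_i$ restrict to the identity on~$A$, so $\Phi$ vanishes on~$A$. Moreover, by \cite{Kwasniewski-Meyer:Essential}*{Lemma~3.2}, each generalised expectation is automatically an $A$\nb-bimodule map; hence so is $\Phi$, and clearly $\norm{\Phi}\le 2$. In particular, the image $\Phi(B)$ is an $A$\nb-subbimodule of~$\tilde{A}$.

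The key step is to show that $\Phi(B)$, with the norm inherited from~$\tilde{A}$, is aperiodic as a normed $A$\nb-bimodule. Fix $x \in B$ and set $y \defeq \Phi(x)$. Given $D\in \Her(A)$ and $\varepsilon>0$, the aperiodicity of $A\subseteq B$ (Definition~\ref{def:aperiodic_inclusion}) produces $a\in D^+_1$ and $y'\in A$ with $\norm{axa - y'}<\varepsilon/(1+\norm{\Phi})$. Since $\Phi$ is an $A$\nb-bimodule map vanishing on~$A$,
\[
  a y a \;=\; \Phi(axa) \;=\; \Phi(axa - y'),
\]
so $\norm{aya}\le \norm{\Phi}\cdot \norm{axa-y'}<\varepsilon$. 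Thus every element of $\Phi(B)$ satisfies Kishimoto's condition, proving that $\Phi(B)$ is aperiodic.

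The proof concludes by invoking the anti-aperiodicity of $A\subseteq \tilde{A}$: the aperiodic $A$\nb-subbimodule $\Phi(B)\subseteq \tilde{A}$ must vanish, so $E_1 = E_2$. There is no real obstacle here; the only conceptual point is recognising that a bounded $A$\nb-bimodule map killing~$A$ automatically transports aperiodicity of $A\subseteq B$ to its image, simply because $\Phi$ annihilates the $A$\nb-valued approximants furnished by aperiodicity. Equivalently, $\Phi$ factors through the Banach $A$\nb-bimodule $B/A$, which is aperiodic by definition of an aperiodic inclusion, and \cite{Kwasniewski-Meyer:Aperiodicity}*{Lemma~4.2} ensures aperiodicity passes through bounded bimodule maps. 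If one wishes to use anti-aperiodicity only against \emph{closed} subbimodules, one simply replaces $\Phi(B)$ by its closure, which remains aperiodic by Lemma~\ref{lem:largest_aperiodic_subbimodule}.
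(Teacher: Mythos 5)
Your proposal is correct and follows essentially the same route as the paper: form $\Phi = E_1 - E_2$, observe it is an $A$\nb-bimodule map vanishing on~$A$, deduce that its range is an aperiodic $A$\nb-subbimodule of~$\tilde{A}$, and conclude by anti-aperiodicity. The only cosmetic difference is that you verify Kishimoto's condition for $\Phi(B)$ by a direct computation, whereas the paper delegates this to the cited lemma on images of aperiodic bimodules under bounded bimodule maps.
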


\begin{proof}
  Let \(E_1,E_2\colon B \rightrightarrows \tilde{A}\) be two
  generalised expectations.  The map
  \(E_1-E_2\colon B\to \tilde{A}\) is an \(A\)\nb-bimodule map that
  vanishes on~\(A\).  Thus it descends to a bounded
  \(A\)\nb-bimodule map \(B/A \to \tilde{A}\).  Since~\(B/A\) is an
  aperiodic \(A\)\nb-bimodule, the range of the map \(E_1 - E_2\) is
  an aperiodic \(A\)\nb-subbimodule of~\(\tilde{A}\) by
  \cite{Kwasniewski-Meyer:Essential}*{Lemma~5.12}.  Since
  \(A\subseteq \tilde{A}\) is anti-aperiodic, the range must
  be~\(0\).  So \(E_1 = E_2\).
\end{proof}

\begin{definition}[\cite{Kwasniewski-Meyer:Essential}*{Definition 5.19}]\label{def:supportive_expectation}
  A generalised expectation \(E\colon B\to \tilde{A} \supseteq A\)
  is called \emph{supportive} if no non-zero element of \(E(B^+)\)
  satisfies Kishimoto's condition, that is, if, for any \(b\in B^+\)
  with \(E(b)\neq0\), there are \(\delta>0\) and \(D\in\Her(A)\)
  such that \(\norm{xE(b) x}\ge \delta\) for all \(x\in D^+_1\).
\end{definition}

\begin{proposition}
  \label{pro:aper_bimodules_supportive}
  Let \(B\supseteq A\subseteq \tilde{A}\) be \(\Cst\)\nb-inclusions.
  If \(A\subseteq \tilde{A}\) is anti-aperiodic,
  then Kishimoto's condition fails for any non-zero positive element
  in~\(\tilde{A}\), and so any generalised expectation
  \(E\colon B \rightarrow \tilde{A}\) is supportive.
\end{proposition}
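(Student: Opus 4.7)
The plan is to prove the first assertion by contradiction, using the fact that the set of elements satisfying Kishimoto's condition in any normed $A$-bimodule is itself a closed $A$-subbimodule, hence the largest aperiodic $A$-subbimodule. Granting this, suppose some non-zero $c \in \tilde{A}^+$ satisfies Kishimoto's condition. Then $c$ lies in a non-zero aperiodic $A$-subbimodule of $\tilde{A}$ (namely, the set of all elements satisfying Kishimoto's condition), contradicting the anti-aperiodicity of $A \subseteq \tilde{A}$.

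The key step behind this short argument is exactly the content used to prove Lemma~\ref{lem:largest_aperiodic_subbimodule}: namely \cite{Kwasniewski-Meyer:Aperiodicity}*{Lemma 4.2} (equivalently \cite{Kwasniewski-Meyer:Essential}*{Lemma 5.12}), which ensures both closedness of the collection of aperiodic elements and its stability under the $A$-bimodule operations. So I would first invoke that lemma to identify the aperiodic part $\Ape(\tilde{A}) \subseteq \tilde{A}$ with the largest aperiodic $A$-subbimodule, then note that $c \in \Ape(\tilde{A})$ by hypothesis, and conclude $\Ape(\tilde{A}) \neq 0$, which contradicts anti-aperiodicity.

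For the second assertion, let $E\colon B \to \tilde{A}$ be any generalised expectation and take $b \in B^+$ with $E(b) \neq 0$. Since $E$ is completely positive, $E(b) \in \tilde{A}^+$ is non-zero, and by the first part it fails Kishimoto's condition. Unfolding the negation of Definition~\ref{def:aperiodic_module} for $E(b)$ gives some $D \in \Her(A)$ and $\delta > 0$ such that $\norm{x E(b) x} \ge \delta$ for every $x \in D^+_1$, which is precisely the requirement in Definition~\ref{def:supportive_expectation} that $E$ be supportive.

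I do not expect any real obstacle; the proposition is essentially a translation of the definitions once one knows that the set of elements of an $A$-bimodule satisfying Kishimoto's condition is itself a (closed) $A$-subbimodule. The only point that could conceivably require care is verifying that this set really is closed under the bimodule action and norm limits, but this is settled by the already-invoked lemma and is also what makes the largest aperiodic subbimodule well-defined in Lemma~\ref{lem:largest_aperiodic_subbimodule}.
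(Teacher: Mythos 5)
The second half of your argument (passing from the failure of Kishimoto's condition for \(E(b)\) to supportiveness of \(E\)) is fine and is just an unwinding of Definitions \ref{def:aperiodic_module} and~\ref{def:supportive_expectation}. The first half, however, rests on a claim that the cited lemma does not provide: you assert that the set of elements of \(\tilde{A}\) satisfying Kishimoto's condition is an \(A\)\nb-\emph{subbimodule} (and hence is the largest aperiodic subbimodule, so that anti-aperiodicity forces it to vanish). What \cite{Kwasniewski-Meyer:Aperiodicity}*{Lemma~4.2} gives --- and what the paper itself invokes at exactly this point --- is only that this set is a \emph{closed vector subspace}. It is not stable under the bimodule action in any obvious way: knowing that \(\norm{aca}\) can be made small for \(a\) in a prescribed hereditary subalgebra says nothing about \(\norm{a\,ucv\,a}\) for \(u,v\in A\), because \(a\) need not commute with \(u\) and~\(v\). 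Consequently, from ``\(c\) satisfies Kishimoto's condition'' you cannot conclude that \(c\) lies in a non-zero aperiodic subbimodule, and your contradiction with anti-aperiodicity does not follow. A further warning sign is that your argument, if correct, would apply verbatim to arbitrary non-zero elements of \(\tilde{A}\), whereas the proposition is stated only for positive ones.

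The paper's proof is designed precisely to get around this. Starting from \(0\neq b\in\tilde{A}^+\), it considers the genuine subbimodule \(A^1 b^{1/2} A^1\), which is non-zero and therefore not aperiodic by anti-aperiodicity; since the Kishimoto set is a closed subspace and the unital algebra \(A^1\) is spanned by unitaries, some \(u b^{1/2} v\) with \(u,v\) unitary must fail Kishimoto's condition on some \(D\in\Her(A)\) with a bound \(\delta>0\); and then the estimate \(\norm{yby}=\norm{xubu^*x}\ge\norm{xub^{1/2}vx}^2\ge\delta^2\) for \(y=u^*xu\in(u^*Du)^+_1\) transfers the failure back to~\(b\). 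This last step uses the positivity of~\(b\) (through \(b^{1/2}\) and the \(\Cst\)\nb-identity) in an essential way. To repair your proof you would either have to establish that the Kishimoto set is a subbimodule --- which is not available --- or replace your one-line contradiction by an argument of this kind.
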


\begin{proof}
  Let \(0 \neq b\in \tilde{A}^+\).  Let
  \(A^1 \defeq A\oplus \C\cdot 1\) be the unital \(\Cst\)\nb-algebra
  generated by~\(A\) and extend the \(A\)\nb-bimodule structure
  on~\(\tilde{A}\) to an \(A^1\)\nb-bimodule structure.  By
  assumption, the \(A\)\nb-bimodule \(A^1 b^{1/2} A^1\) is not
  aperiodic.  The set of elements that satisfy Kishimoto's condition
  is a closed vector subspace by
  \cite{Kwasniewski-Meyer:Aperiodicity}*{Lemma~4.2}.  The unital
  \(\Cst\)\nb-algebra~\(A^1\) is spanned by unitaries.  Hence there
  must be unitaries \(u,v\in A^1\) such that Kishimoto's condition
  fails for \(u b^{1/2} v\).  That is, there are \(\delta>0\) and
  \(D\in \Her(A)\) such that \(\norm{x u b^{1/2} v x}\ge \delta\)
  for all \(x\in D^+_1\).  Let \(y\in (u^* D u)^+_1\).  Then
  \(y = u^* x u\) for some \(x\in D^+_1\) and hence
  \[
    \norm{y b y}
    = \norm{x u b u^* x}
    \ge \norm{x u b^{1/2} v x^2 v^* b^{1/2} u^* x}
    = \norm{x u b^{1/2} v x}^2
    \ge \delta^2.
  \]
  So Kishimoto's condition fails for~\(b\).
\end{proof}

\begin{lemma}
  \label{lem:ker_E_max_aperiodic}
  Let \(B\supseteq A\subseteq \tilde{A}\) be \(\Cst\)\nb-inclusions
  such that \(A\subseteq B\) is aperiodic and
  \(A\subseteq \tilde{A}\) is anti-aperiodic.  Let
  \(E\colon B\to\tilde{A}\) be a generalised expectation.  Then
  \(\ker E\subseteq B\) is the largest aperiodic \(A\)\nb-bimodule
  in~\(B\).
\end{lemma}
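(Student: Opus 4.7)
The plan is to prove two things: first, that $\ker E$ is itself an aperiodic $A$\nobreakdash-subbimodule of~$B$; and second, that every aperiodic $A$\nobreakdash-subbimodule of~$B$ is contained in $\ker E$. Together these imply that $\ker E$ is the largest aperiodic $A$\nobreakdash-subbimodule.

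For the first step, recall from Proposition 3.2 above (actually the preceding remark citing \cite{Kwasniewski-Meyer:Essential}*{Lemma~3.2}) that the generalised expectation~$E$ is automatically an $A$\nobreakdash-bimodule map, and by hypothesis $E|_A=\Id_A$. Fix $x\in\ker E$, $D\in\Her(A)$ and $\varepsilon>0$. Because the inclusion $A\subseteq B$ is aperiodic, $B/A$ is an aperiodic $A$\nobreakdash-bimodule, so Definition~\ref{def:aperiodic_inclusion} supplies $a\in D^+_1$ and $y\in A$ with $\norm{a x a-y}<\varepsilon$. Applying~$E$ and using that it is contractive, bimodular and restricts to the identity on~$A$, I get
\[
\norm{y}=\norm{E(y)-E(a x a)}=\norm{E(y-a x a)}\le \norm{a x a-y}<\varepsilon,
\]
since $E(axa)=aE(x)a=0$. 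Thus $\norm{a x a}\le\norm{a x a-y}+\norm{y}<2\varepsilon$, which shows that Kishimoto's condition holds for~$x$. Hence $\ker E$ is an aperiodic $A$\nobreakdash-subbimodule of~$B$.

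For the second step, let $X\subseteq B$ be any aperiodic $A$\nobreakdash-subbimodule. The restriction $E|_X\colon X\to\tilde A$ is a contractive $A$\nobreakdash-bimodule map. By \cite{Kwasniewski-Meyer:Aperiodicity}*{Lemma~4.2} (equivalently \cite{Kwasniewski-Meyer:Essential}*{Lemma~5.12}), the image of an aperiodic bimodule under a contractive $A$\nobreakdash-bimodule map is again aperiodic, so $E(X)$ is an aperiodic $A$\nobreakdash-subbimodule of~$\tilde A$. But the inclusion $A\subseteq\tilde A$ is anti\nobreakdash-aperiodic by assumption, forcing $E(X)=0$, that is, $X\subseteq\ker E$.

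Neither step is really the hard part; the real content of the lemma lies in the hypothesis that $A\subseteq\tilde A$ is anti\nobreakdash-aperiodic, which will be established separately for the injective hull $\tilde A=I(A)$ elsewhere in Section~\ref{sec:aperiodic_pseudo-expectation}. The only mildly delicate point in the first step is the observation that although aperiodicity of $B/A$ only controls $a x a$ modulo~$A$, the presence of~$E$ promotes this to genuine norm control on $\ker E$, because $E$ kills $axa$ while reproducing any approximant $y\in A$.
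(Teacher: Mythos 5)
Your proposal is correct and follows essentially the same route as the paper: the second step (pushing an aperiodic subbimodule $X$ forward to $E(X)\subseteq\tilde A$ and invoking anti-aperiodicity) is identical, and your first step's key estimate $\norm{y}=\norm{E(y-axa)}\le\norm{axa-y}$ is exactly the paper's observation that contractivity of $E$ makes the quotient map $\ker E\to B/A$ bounded below, so that $\ker E$ inherits aperiodicity from $B/A$ up to a harmless factor of~$2$. The only difference is presentational: the paper packages this as a norm-equivalence statement before transferring aperiodicity, whereas you unwind it inside a direct verification of Kishimoto's condition.
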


\begin{proof}
  We first show that \(\ker E\) is aperiodic.  Since
  \(E|_A= \Id_A\), the quotient map from \(\ker E\) to~\(B/A\) is
  injective.  We claim that it is a topological isomorphism.  Then
  \(\ker E\) inherits aperiodicity from~\(B/A\).  Let
  \(b\in \ker E\) and let \(a\in A\).  Then \(a = E(a) = E(a-b)\).
  Since~\(E\) is contractive, this implies
  \(\norm{a} \le \norm{a-b}\).  Then
  \(\norm{b-a} \ge \norm{b} - \norm{a} \ge \norm{b} - \norm{b-a}\).
  So \(\norm{b-a} \ge \norm{b}/2\).  This means that
  \(\norm{b}_{B/A} \ge \norm{b}_{\ker E}/2\).

  Now let \(X\subseteq B\) be any aperiodic \(A\)\nb-bimodule.  Then
  \(E(X) \subseteq \tilde{A}\) with the norm from~\(\tilde{A}\) is
  aperiodic by \cite{Kwasniewski-Meyer:Essential}*{Lemma~5.12}.
  Since \(A\subseteq \tilde{A}\) is anti-aperiodic, it follows that
  \(E(X)=0\).  So \(X\subseteq \ker E\).
\end{proof}

Now we begin to prove that the inclusion \(A\subseteq I(A)\) is
anti-aperiodic.

\begin{lemma}
  \label{lem:isometric_inclusion}
  Let \(A\subseteq \tilde{A}\) be a \(\Cst\)\nb-inclusion and let
  \(X\subseteq \tilde{A}\) be an aperiodic, closed
  \(A\)\nb-subbimodule.  The restriction of the quotient map
  \(A\to \tilde{A}/X\) is isometric.
\end{lemma}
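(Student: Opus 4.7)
The plan is to show $\|a\|\le\|a-x\|$ for every $a\in A$ and $x\in X$; after rescaling I may assume $\|a\|=1$. Rather than attack $a-x$ directly, I would pass to $(a-x)(a-x)^*=aa^*-(ax^*+xa^*)+xx^*$ and bound its norm from below by $\|aa^*\|=1$ after compressing by a carefully chosen $e\in A^+_1$. The element $e$ will be tailored to $aa^*$ via a hereditary subalgebra of $A$, and Kishimoto's condition will be applied to the cross-term $ax^*+xa^*$, which lies in the aperiodic part.

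For the compression, given $\delta\in(0,1)$, set $c\defeq(aa^*-(1-\delta))_+\in A^+$ (nonzero since $\|aa^*\|=1>1-\delta$) and let $D\defeq\overline{cAc}\in\Her(A)$. I claim every $e\in D^+_1$ satisfies $\|e\,aa^*\,e\|\ge1-\delta$. This is a spectral observation: letting $p\in A^{**}$ be the spectral projection of $aa^*$ for $[1-\delta,1]$, we have $pc=c=cp$, hence $pbp=b$ for every $b\in cAc$ and, by continuity, $pep=e$ for every $e\in D$; combined with $p\,aa^*\,p\ge(1-\delta)p$ this yields $e\,aa^*\,e=e(p\,aa^*\,p)e\ge(1-\delta)e^2$.

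For the Kishimoto step, I would first note that $X^*\defeq\{x^*:x\in X\}$ is itself aperiodic, since $(exe)^*=ex^*e$ whenever $e=e^*$. By \cite{Kwasniewski-Meyer:Aperiodicity}*{Lemma~4.2}, the elements of $\tilde{A}$ satisfying Kishimoto's condition form a closed vector subspace, so in particular $ax^*+xa^*\in X^*+X$ satisfies Kishimoto's condition. Applying this with the hereditary subalgebra $D$ above and any $\varepsilon>0$ produces $e\in D^+_1$ with $\|e(ax^*+xa^*)e\|<\varepsilon$.

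For this $e$, I would write $e(a-x)(a-x)^*e=\bigl(e\,aa^*\,e+e\,xx^*\,e\bigr)-e(ax^*+xa^*)e$. Both $e\,aa^*\,e$ and $e\,xx^*\,e=(x^*e)^*(x^*e)$ are positive, so $\|e\,aa^*\,e+e\,xx^*\,e\|\ge\|e\,aa^*\,e\|\ge1-\delta$, and the triangle inequality gives $\|e(a-x)(a-x)^*e\|\ge(1-\delta)-\varepsilon$. Since $\|e\|=1$, this is bounded above by $\|a-x\|^2$; letting $\delta,\varepsilon\to0$ delivers $\|a-x\|\ge1=\|a\|$. The main obstacle I anticipate is the asymmetry between the aperiodic cross-term and the self-term $xx^*$: products $X\cdot X^*$ need not be aperiodic, so Kishimoto's condition is unavailable for $xx^*$ itself. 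The saving grace is that after compression by the positive element $e$, the contribution $e\,xx^*\,e$ is positive and hence can only strengthen the lower bound, so a single choice of $e$ simultaneously tames the aperiodic cross-term and tolerates the non-aperiodic self-term.
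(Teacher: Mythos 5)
Your proof is correct and follows essentially the same route as the paper's: compress $\abs{a-x}^2$ (the paper uses $(a+b)^*(a+b)$, you use $(a-x)(a-x)^*$) by an element of a hereditary subalgebra tailored to $a$ so that the $aa^*$-term keeps almost its full norm, kill the cross-term via Kishimoto's condition for the aperiodic bimodule $X+X^*$, and use positivity of the $xx^*$-term. The only differences are cosmetic: the paper cites \cite{Kwasniewski-Meyer:Aperiodicity}*{Lemma~2.9} for the hereditary subalgebra $D$ where you give the direct spectral-projection construction, and it first replaces $X$ by $\overline{X+X^*}$ where you invoke the closed-subspace property of Kishimoto's condition on the cross-term directly.
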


\begin{proof}
  If \(X\subseteq \tilde{A}\) is aperiodic, then so is~\(X^*\)
  because \(\norm{a b^* a} = \norm{a^* b a^*}\).  We get a stronger
  statement if we replace~\(X\) by the closed linear span of \(X\)
  and~\(X^*\), and the latter is aperiodic as well by
  Lemma~\ref{lem:largest_aperiodic_subbimodule}.  Therefore, we may
  assume without loss of generality that \(X = X^*\).

  Let \(a\in A\), \(b\in X\) and \(\varepsilon >0\).
  There is \(D\in \Her(A)\) such that
  \(\norm{x a^* a x}\ge \norm{a^*a} -\varepsilon/2\) for all
  \(x\in D^+_1\) (see
  \cite{Kwasniewski-Meyer:Aperiodicity}*{Lemma~2.9}).  Since
  \(a^* b + b^* a\in X\), there is \(x\in D^+_1\)
  with \(\norm{x(a^* b+b^* a)x}< \varepsilon/2\).  Then
  \begin{align*}
    \norm{a+b}^2
    &= \norm{a^*a+a^*b+b^*a+b^*b}
    \ge \norm{x(a^*a+a^*b+b^*a+b^*b)x}
    \\&\ge \norm{xa^*ax+xb^*bx} - \norm{x(a^*b+b^*a)x}
    > \norm{xa^*ax} -\varepsilon/2
    \\&\ge \norm{a^*a}-\varepsilon
    = \norm{a}^2-\varepsilon.
  \end{align*}
  Since \(\varepsilon>0\) is arbitrary, this implies
  \(\norm{a+b} \ge \norm{a}\).  That is, the quotient
  norm on \(\tilde{A}/X\) restricts to the usual
  norm on~\(A\).
\end{proof}

\begin{lemma}
  \label{lem:aperiodic_to_matrices}
  Let~\(A\) be a \(\Cst\)\nb-algebra and let \(n\ge1\).  Let~\(X\)
  be an aperiodic normed \(A\)\nb-bimodule.  Then \(\Mat_n(X)\) is
  an aperiodic \(\Mat_n(A)\)\nb-bimodule.
\end{lemma}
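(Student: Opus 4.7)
My plan is to combine the closure properties of Kishimoto's condition under the bimodule operations with a careful choice of test-elements built from a strictly positive element of $\mathcal{D}$.

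First I would invoke the matrix analogue of \cite{Kwasniewski-Meyer:Aperiodicity}*{Lemma~4.2}, which asserts that the set of $M\in\Mat_n(X)$ satisfying Kishimoto's condition for the $\Mat_n(A)$\nb-bimodule structure is a closed $\Mat_n(A)$\nb-subbimodule of $\Mat_n(X)$.  Since the elements $e_{ij}\otimes x$ for $x\in X$ and $1\le i,j\le n$ span $\Mat_n(X)$, and since $(e_{i1}\otimes u_\lambda)(e_{11}\otimes x)(e_{1j}\otimes u_\lambda)=e_{ij}\otimes u_\lambda x u_\lambda$ converges to $e_{ij}\otimes x$ (with $u_\lambda$ an approximate unit of $A$), the problem reduces to verifying Kishimoto's condition for the single family of elements $M=e_{11}\otimes x$ with $x\in X$ arbitrary.

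Given $\mathcal{D}\in\Her(\Mat_n(A))$ and $\varepsilon>0$, I would pick a positive $d\in\mathcal{D}$ with $\|d\|=1$, write $d^{1/2}=(h_{ij})$, and test with elements $T_a\defeq d^{1/2}(a\otimes e_{11})d^{1/2}$ for $a\in A^+$. The identity $a\otimes e_{11}=(a^{1/2}\otimes e_{11})^2$ shows $T_a\ge 0$, and $T_a\le\|a\|\cdot d$ puts $T_a\in\mathcal{D}$ by the hereditary property. A direct matrix computation yields
\[
  T_a\,(e_{11}\otimes x)\,T_a = v\,\bigl(a\,h_{11}\,x\,h_{11}\,a\bigr)\,v^*,
\]
where $v\in A^n$ is the first column of $d^{1/2}$. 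The standard outer-product estimate $\|v z v^*\|\le \|v^*v\|\cdot\|z\|=\|d_{11}\|\cdot\|z\|$ gives $\|T_a(e_{11}\otimes x)T_a\|\le \|d_{11}\|\cdot\|a(h_{11}xh_{11})a\|$, while the analogous computation shows $\|T_a\|=\|d_{11}^{1/2}ad_{11}^{1/2}\|$.

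To close the argument, apply aperiodicity of $X$ to the element $h_{11}xh_{11}\in X$ inside the hereditary subalgebra $D\defeq \overline{d_{11}Ad_{11}}\in\Her(A)$, to make $\|a(h_{11}xh_{11})a\|$ as small as desired for some $a\in D^+_1$.  The main obstacle I expect will be to ensure that $\|T_a\|=\|d_{11}^{1/2}ad_{11}^{1/2}\|$ remains bounded away from zero, so that normalising to $T\defeq T_a/\|T_a\|\in\mathcal{D}^+_1$ preserves the estimate $\|T M T\|<\varepsilon$. The advantage of choosing $D=\overline{d_{11}Ad_{11}}$ is that Kishimoto-elements in this subalgebra are ``aligned'' with $d_{11}$; I would handle the lower bound by a two-stage construction, first picking an approximate-unit element of~$D$ so that $\|d_{11}^{1/2}ad_{11}^{1/2}\|$ is close to $\|d_{11}\|$, and then performing a Kishimoto perturbation inside a further hereditary subalgebra---exactly the same iterated-hereditary trick that underlies the proof of Lemma~4.2 of~\cite{Kwasniewski-Meyer:Aperiodicity}.
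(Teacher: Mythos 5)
Your overall strategy---conjugating by a column vector to transport the problem from \(\Mat_n(A)\) down to \(A\), and then normalising the resulting test element via a lower bound of the type in \cite{Kwasniewski-Meyer:Aperiodicity}*{Lemma~2.9}---is essentially the strategy of the paper's proof, and the normalisation obstacle you flag at the end is real and is resolved exactly as you suggest.  But there are two concrete gaps.  First, the reduction to \(M=e_{11}\otimes x\) invokes the claim that the elements of \(\Mat_n(X)\) satisfying Kishimoto's condition form a closed \(\Mat_n(A)\)-\emph{subbimodule}.  Everywhere else this paper uses \cite{Kwasniewski-Meyer:Aperiodicity}*{Lemma~4.2} only for the statement that they form a closed \emph{linear subspace}, and closure under the module action is not obvious (the natural attempt, writing \(c\cdot(xa)\cdot c = c\cdot(exae)\cdot c\) for a local unit \(e\), is circular).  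Fortunately this reduction is also unnecessary: every matrix is a finite sum \(\sum_{i,j} e_{ij}\otimes x_{ij}\), so linearity alone reduces you to the elementary matrices \(e_{ij}\otimes x\) for all \(i,j\), which your conjugation handles just as well.

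Second, and more seriously, the first column \(v\) of \(d^{1/2}\) may vanish: if \(\mathcal{D}\in\Her(\Mat_n(A))\) consists of matrices supported in the lower-right corner, then \(d_{11}=0\), \(v=0\), \(D=\overline{d_{11}Ad_{11}}=0\), and all your test elements \(T_a\) are zero.  For \(e_{11}\otimes x\) this degenerate case happens to be harmless (positivity forces the whole first row and column of every \(T\in\mathcal{D}^+\) to vanish, so \(T(e_{11}\otimes x)T=0\)), but once you must treat \(e_{ij}\otimes x\) for all \(i,j\), a fixed column is too rigid.  The paper avoids this by using the Morita equivalence \(\Mat_n(A)\cong\Comp(A^n)\) and picking an arbitrary norm-one vector \(\eta\in \mathcal{D}\cdot A^n\), which always exists and is automatically adapted to \(\mathcal{D}\); the elements \(w_{i,j}\defeq\eta_i^*\cdot x_{i,j}\cdot\eta_j\) then play the role of your \(h_{11}xh_{11}\), and aperiodicity of \(X^{n^2}\) produces a single \(b\) handling all entries simultaneously.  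A final small point: for a general normed bimodule \(X\) the estimate \(\norm{vzv^*}\le\norm{v^*v}\,\norm{z}\) is a \(\Cst\)\nb-identity that is not available; one instead bounds \(\norm{vzv^*}\) by the sum of the norms of its \(n^2\) entries, which only costs a harmless constant, as in the paper's final estimate.
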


\begin{proof}
  Let \(x = (x_{i,j})_{1 \le i,j\le n}\in \Mat_n(X)\),
  \(D\in \Her(\Mat_n(A))\) and \(\varepsilon>0\).  We are going to
  check Kishimoto's condition for this data.  Equip~\(A^n\) with the
  standard right Hilbert \(A\)\nb-module structure.  Then
  \(\Comp(A^n)\cong \Mat_n(A)\), and~\(A^n\) is an
  \(\Mat_n(A)\)-\(A\)-equivalence bimodule.  Then
  \(D\cdot A^n\subseteq A^n\) is a right \(A\)\nb-submodule and
  \(D\cong \Comp(D\cdot A^n)\).  So \(D\cdot A^n\neq0\) and there is
  \(\eta= (\eta_k)_{1 \le k \le n}\in D\cdot A^n \subseteq A^n\)
  with \(\norm{\eta}=1\).  Then
  \cite{Kwasniewski-Meyer:Aperiodicity}*{Lemma~2.9} applied to
  \(\abs{\eta} \defeq \langle\eta,\eta \rangle_A^{1/2}\in A\) gives
  \(D_0\in \Her(\overline{\abs{\eta} A\abs{\eta}})\) with
  \(\norm{\abs{\eta} b} \ge (1-\varepsilon) \norm{b}\) for all
  \(b\in D_0\).  If \(b\in D_0\), then
  \begin{equation}
    \label{eq:lower_bound_eta_b}
    \norm{\eta b}^2
    = \norm{\braket{\eta b}{\eta b}_A}
    = \norm{b^* \abs{\eta}^2 b}
    = \norm{\abs{\eta} b}^2
    \ge (1-\varepsilon)^2\norm{b}^2.
  \end{equation}
  Put \(w_{i,j}\defeq \eta_i^*\cdot x_{i,j}\cdot \eta_j \in X\).
  Since~\(X\) is an aperiodic \(A\)\nb-bimodule, so is~\(X^{n^2}\)
  by \cite{Kwasniewski-Meyer:Aperiodicity}*{Lemma~4.2}.  Then there
  is \(b\in (D_0)^+_1\) with
  \(\norm{b w_{i,j} b}<\varepsilon (1-\varepsilon)^2/n^4\) for all
  \(1 \le i,j \le n\).  Let \(b_0\defeq b/\norm{\eta b}\in D_0^+\),
  so that \(\norm{\eta b_0} = 1\).
  Using~\eqref{eq:lower_bound_eta_b}, we get
  \(\norm{b_0 w_{i,j} b_0}<\varepsilon/n^4\).  The rank-one operator
  \(\ket{\eta b_0}\bra{\eta b_0}\) belongs to
  \(\Comp(D\cdot A^n)^+_1\) because
  \(\norm{\ket{\eta b_0}\bra{\eta b_0}} = \norm{\eta b_0}^2 = 1\)
  and \(\eta b_0 \in D\cdot A^n\).  The isomorphism
  \(\Comp(D\cdot A^n)\cong D\) maps it to an element \(a\in D^+_1\).
  We claim that \(\norm{a x a}<\varepsilon\).  For this computation,
  write \(a \in D \subseteq \Mat_n(A)\) as a matrix
  \((a_{k,l})_{1\le k,l\le n}\) with
  \(a_{k,l} = \eta_k b_0^2 \eta_l^*\).  So~\(a x a\) is the matrix
  with \(k,l\)-entry
  \(\sum_{i,j=1}^n \eta_k b_0^2\eta_i^* x_{i,j} \eta_j
  b_0^2\eta_l^*\).  We estimate
  \begin{align*}
    \norm{a x a}
    &\le \sum_{k,l,i,j=1}^n
      {}\norm{\eta_k b_0^2 \eta_i^* x_{i,j} \eta_j b_0^2\eta_l^*}
      = \sum_{k,l,i,j=1}^n
      {}\norm{(\eta_k b_0) b_0w_{i,j} b_0 (\eta_l b_0)^*}
    \\&\le \sum_{k,l,i,j=1}^n {}\norm{b_0 w_{i,j} b_0}
    < \varepsilon.
  \end{align*}
  Thus \(x\in \Mat_n(X)\) satisfies Kishimoto's condition.
\end{proof}

\begin{lemma}
  \label{lem:completely_isometric_inclusion}
  Let \(A\subseteq \tilde{A}\) be a \(\Cst\)\nb-inclusion and
  \(X\subseteq \tilde{A}\) an aperiodic \(A\)\nb-subbimodule.  The
  restriction of the quotient map \(A\to \tilde{A}/X\) is completely
  isometric.
\end{lemma}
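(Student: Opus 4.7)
The proof will be a quick combination of the two preceding lemmas. To show complete isometry, it suffices to verify that for every $n \ge 1$ the amplified map $\Mat_n(A) \to \Mat_n(\tilde{A}/X)$ is isometric. My plan is to identify $\Mat_n(\tilde{A}/X)$ with $\Mat_n(\tilde{A})/\Mat_n(X)$ in the canonical way, so that the amplified map becomes the restriction to $\Mat_n(A)$ of the quotient map $\Mat_n(\tilde{A}) \to \Mat_n(\tilde{A})/\Mat_n(X)$.

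Now I apply Lemma~\ref{lem:aperiodic_to_matrices}: since $X$ is an aperiodic $A$-bimodule, $\Mat_n(X)$ is an aperiodic $\Mat_n(A)$-subbimodule of $\Mat_n(\tilde{A})$. It is also closed, as it inherits a Banach space norm from $\Mat_n(\tilde{A})$ (equivalently, the norm on $\Mat_n(X)$ coming from $X$). Then Lemma~\ref{lem:isometric_inclusion}, applied to the $\Cst$-inclusion $\Mat_n(A) \subseteq \Mat_n(\tilde{A})$ and the closed aperiodic subbimodule $\Mat_n(X)$, yields that the quotient map restricts to an isometry on $\Mat_n(A)$.

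Since this holds for every $n \ge 1$, the original quotient map $A \to \tilde{A}/X$ is completely isometric. There is no real obstacle beyond recognising that Lemma~\ref{lem:aperiodic_to_matrices} was designed precisely to provide the matricial aperiodicity needed to feed Lemma~\ref{lem:isometric_inclusion} back at each matrix level.
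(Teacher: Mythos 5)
Your proposal is correct and follows essentially the same route as the paper: identify \(\Mat_n(\tilde{A}/X)\) with \(\Mat_n(\tilde{A})/\Mat_n(X)\), invoke Lemma~\ref{lem:aperiodic_to_matrices} for aperiodicity of \(\Mat_n(X)\) over \(\Mat_n(A)\), and then apply Lemma~\ref{lem:isometric_inclusion} at each matrix level. Your explicit remark on closedness of \(\Mat_n(X)\) is a small bonus the paper leaves implicit (working with the quotient semi-norm), but otherwise the two arguments coincide.
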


\begin{proof}
  Let \(n\ge1\).  By definition,
  \(\Mat_n\bigl(\tilde{A}/X\bigr)\cong \Mat_n(\tilde{A})/\Mat_n(X)\)
  with the quotient semi-norm.  By
  Lemma~\ref{lem:aperiodic_to_matrices},
  \(\Mat_n(X) \subseteq \Mat_n(\tilde{A})\) is aperiodic.  Then
  Lemma~\ref{lem:isometric_inclusion} shows that the map
  \(\Mat_n(A)\to \Mat_n(\tilde{A}) \bigm/ \Mat_n(X)\) is isometric.
\end{proof}

\begin{proposition}
  \label{pro:Ape_IA}
  For any \(\Cst\)\nb-algebra, the inclusion \(A\subseteq I(A)\) is
  anti-aperiodic.  That is, the injective hull \(I(A)\) contains no
  non-zero aperiodic \(A\)\nb-subbimodule.
\end{proposition}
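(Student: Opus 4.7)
The plan is to derive a contradiction from the existence of a non-zero aperiodic \(A\)\nb-subbimodule \(X \subseteq I(A)\) by constructing a ucp map \(\phi \colon I(A) \to I(A)\) that restricts to the identity on~\(A\) but is not the identity, contradicting Hamana's rigidity theorem for~\(I(A)\). Since Kishimoto's condition is invariant under \(x \mapsto x^*\), aperiodicity passes to~\(X^*\); closed linear spans of aperiodic subbimodules remain aperiodic by Lemma~\ref{lem:largest_aperiodic_subbimodule}, so I may assume \(X = X^*\).

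By Lemma~\ref{lem:completely_isometric_inclusion}, \(A \cap X = 0\). I first upgrade this to \(A^1 \cap X = 0\), where \(A^1 \defeq A + \C \cdot 1_{I(A)}\): if \(a + \lambda \cdot 1_{I(A)} \in X\) with \(\lambda \neq 0\), then \(A(a + \lambda \cdot 1_{I(A)})A \subseteq X\) also lies in~\(A\), hence in \(X \cap A = 0\); approximating~\(1_{I(A)}\) strictly by an approximate unit of~\(A\) then forces \(a + \lambda \cdot 1_{I(A)} = 0\) in \(\Mult(A) \subseteq I(A)\), contradicting \(\lambda \neq 0\).

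Next I adapt the proof of Lemma~\ref{lem:isometric_inclusion} (and its matricial refinement Lemma~\ref{lem:completely_isometric_inclusion}) to show that the linear map \(\pi \colon A^1 + X \to A^1 \subseteq I(A)\), \(y + x \mapsto y\), is completely contractive. The key observation is that every non-zero hereditary \(\Cst\)\nb-subalgebra of~\(A^1\) intersects~\(A\) in a non-zero hereditary \(\Cst\)\nb-subalgebra of~\(A\) (since \(A\) has codimension at most one in~\(A^1\)), so Kishimoto's condition for~\(X\) over~\(A\) still supplies the excising elements needed to transplant the estimates of Lemma~\ref{lem:isometric_inclusion} from~\(A\) to~\(A^1\).

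Finally, fix a faithful representation with \(A \subseteq I(A) \subseteq \Bound(\Hils)\) and realise~\(I(A)\) as the image of a minimal \(A\)\nb-projection \(P \colon \Bound(\Hils) \to \Bound(\Hils)\) in the sense of Hamana. Since \(\Bound(\Hils)\) is injective as an operator space (Wittstock), \(\pi\) extends to a completely contractive map \(\tilde \pi \colon I(A) \to \Bound(\Hils)\), and then \(\phi \defeq P \circ \tilde \pi\) is a completely contractive map \(I(A) \to I(A)\) with \(\phi|_{A^1} = \Id_{A^1}\) and \(\phi|_X = 0\). In particular \(\phi\) is unital, hence ucp, so Hamana's rigidity forces \(\phi = \Id_{I(A)}\); but \(X \subseteq \ker \phi\) then gives \(X = 0\), a contradiction. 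The main obstacle is the third step: carefully extending Lemma~\ref{lem:isometric_inclusion} so that it runs in~\(A^1\) rather than~\(A\), where the positive element \(y^* y\) under scrutiny may have a non-zero scalar part that does not lie in~\(A\).
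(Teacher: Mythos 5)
Your proposal is correct and follows essentially the same route as the paper: the complete contractivity of the projection \(A+X \to A\) killing \(X\) is precisely the statement that \(A \to I(A)/X\) is completely isometric, which is Lemmas \ref{lem:isometric_inclusion} and~\ref{lem:completely_isometric_inclusion}, and the paper likewise concludes by extending along this map using injectivity and then invoking rigidity of \(I(A)\). The only real difference is your detour through the unitisation \(A^1\) to make the final map unital and hence ucp; this is workable (your hereditary-subalgebra claim holds because \(A\) is an essential ideal in \(A^1\) when \(A\) is non-unital, not merely because of the codimension, and \(A^1=A\) otherwise), but it is unnecessary, since the rigidity statement in \cite{Paulsen:Completely_bounded}*{Corollary~15.7} already applies to completely contractive maps \(I(A)\to I(A)\) that fix only \(A\).
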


\begin{proof}
  Let \(\tilde{A}=I(A)\) and let~\(X\) be an aperiodic
  \(A\)\nb-subbimodule of~\(I(A)\).
  Lemma~\ref{lem:completely_isometric_inclusion} says that the map
  \(A\to I(A)/X\) is completely isometric.  Since \(I(A)\) is
  injective, the inclusion \(A \hookrightarrow I(A)\) extends to a
  completely contractive map \(h\colon I(A)/X\to I(A)\).  Hence the
  composite map \(I(A) \onto I(A)/X \to I(A)\) is completely
  contractive and it restricts to the identity map on~\(A\).  The
  rigidity of the injective envelope in
  \cite{Paulsen:Completely_bounded}*{Corollary~15.7} implies that
  any such map is equal to the identity map on \(I(A)\).  This can
  only happen if \(X=0\).
\end{proof}

\begin{proof}[Proof of
  Theorem~\textup{\ref{thm:aperiodicity_implies_unique_expectation}}]
  A pseudo-expectation \(E\colon B\to I(A)\) exists because~\(I(A)\)
  is injective.  By  Proposition~\ref{pro:Ape_IA}  the
  inclusion \(A\subseteq I(A)\) is anti-aperiodic.  Then
  Proposition~\ref{pro:two_gen_expectations} shows that~\(E\) is the
  unique pseudo-expectation \(B\to I(A)\).
  Lemma~\ref{lem:ker_E_max_aperiodic} shows that \(\ker E\) is the
  largest aperiodic bimodule in~\(A\).
  By Proposition~\ref{pro:aper_bimodules_supportive},~\(E\)
  is supportive.  Then the remaining assertions follow from
  \cite{Kwasniewski-Meyer:Essential}*{Theorem~5.28}.
\end{proof}

\begin{remark}
  Is the inclusion \(A \hookrightarrow I(A)\) aperiodic?  While we
  do not have a proof for this, there is some positive evidence.
  First, if~\(A\) is commutative, then \(I(A) = \Locmult(A)\) (see
  \cite{Frank:Injective_local_multiplier}*{Theorem~1}); and the
  inclusion \(A\subseteq \Locmult(A)\) is shown to be aperiodic
  in~\cite{Kwasniewski-Meyer:Essential}.  Secondly, the inclusion
  \(A \hookrightarrow I(A)\) has a unique pseudo-expectation:
  it must be the identity map by the rigidity of \(I(A)\).
\end{remark}

\section{Topological freeness implies aperiodicity}
\label{sec:top_free_aperiodic}
		
In this section, we show that topologically free actions are
aperiodic.  This applies to actions of groups, actions of inverse
semigroups by Hilbert bimodules, or Fell bundles over étale locally
compact groupoids.  The proof reduces to a statement about Hilbert
bimodules.  For Hilbert bimodules over separable
\(\Cst\)\nb-algebras, this is already shown
in~\cite{Kwasniewski-Meyer:Aperiodicity}, where the proof is based
on a statement about automorphisms shown by Olesen--Pedersen
in~\cite{Olesen-Pedersen:Applications_Connes_3}.  Here we give a
direct proof, which applies to arbitrary \(\Cst\)\nb-algebras.

Let \(A\) be a \(\Cst\)\nb-algebra and let~\(\dual{A}\) be its
spectrum.  So~\(\dual{A}\) is the set of unitary equivalence classes
of irreducible representations of~\(A\), equipped with the topology
where the open subsets are \(\dual{J} \subseteq \dual{A}\) for all
ideals \(J\) in \(A\).  Any automorphism \(\alpha\colon A\to A\)
induces a homeomorphism \(\dual{\alpha}\colon\dual{A}\to \dual{A}\)
by \(\dual{\alpha}[\varrho]=[\varrho \circ \alpha]\) for
\([\varrho]\in \dual{A}\).  The automorphism~\(\alpha\) is called
\emph{topologically non-trivial} if the set of
\([\varrho]\in\widehat{A}\) with
\(\widehat{\alpha}[\varrho] \neq [\varrho]\) is dense or,
equivalently,
\(\setgiven{[\varrho]\in \widehat{A}}{[\varrho\circ\alpha] =
  [\varrho]}\) has empty interior in~\(\widehat{A}\).  More
explicitly, for every non-zero ideal~\(I\) in~\(A\), there is an
irreducible representation~\(\varrho\) of~\(I\) such that
\(\varrho\circ \alpha\) and~\(\varrho\) are not unitarily
equivalent.

\begin{definition}
  We call a group action \(\alpha\colon G\to \Aut(A)\)
  \emph{topologically free} if, for each \(g\in G\setminus\{1\}\),
  the automorphism~\(\alpha_g\) is topologically non-trivial.
\end{definition}

\begin{remark}
  \label{rem:Archbold_Spielberg}
  The condition above differs slightly from the one
  in~\cite{Archbold-Spielberg:Topologically_free}, which is used in
  a number of papers
  (including~\cite{Kwasniewski-Meyer:Aperiodicity}) and that
  requires that the union
  \(\bigcup_{k=1}^n \setgiven{[\varrho]\in
    \widehat{A}}{[\varrho\circ\alpha_{g_{k}}] = [\varrho]}\) for
  \(g_1,\dots,g_n\in G\setminus\{1\}\) has empty interior
  in~\(\widehat{A}\).  The two conditions are equivalent when~\(A\)
  contains an essential ideal which is separable or of Type~I (see
  \cite{Kwasniewski-Meyer:Aperiodicity}*{Proposition~9.7}).  In
  general, the results in this article show that our (formally)
  weaker condition implies even stronger results than those
  in~\cite{Archbold-Spielberg:Topologically_free}.
\end{remark}

The dual action of~\(G\) on~\(\dual{A}\) has a transformation
groupoid \(\dual{A}\rtimes G\).  This is an étale topological
groupoid, where the unit space is~\(\dual{A}\), morphisms from
\([\pi]\) to~\([\varrho]\) correspond to the elements \(g\in G\)
such that \(g\cdot [\pi]=[\varrho]\), and the topology on the arrow
space \(G\times \dual{A}\) is the product topology.  The group
action~\(\alpha\) on~\(A\) is topologically free if and only if the
groupoid \(\dual{A}\rtimes G\) is topologically free as in
Definition~\ref{def:topologically_free_groupoid}.  Similar dual
groupoids are defined for actions of inverse semigroups by Hilbert
bimodules and for Fell bundles over étale groupoids.  We will use
this to define topologically free actions of inverse semigroups and
étale groupoids.

To generalise the dual action to Fell bundles, we must explain how a
Hilbert \(A\)\nb-bimodule~\(\Hilm\) induces a partial homeomorphism
of~\(\dual{A}\).  Let \(\s(\Hilm)\) and \(\rg(\Hilm)\) be the closed
ideals in~\(A\) that are generated by the right and the left inner
products.  In symbols, \(\s(\Hilm) = \overline{\mathrm{span}}
\,\braket{\Hilm}{\Hilm}\) and
\(\rg(\Hilm) = \overline{\text{span}} \,\BRAKET{\Hilm}{\Hilm}\).  Let
\(\varrho\colon A\to\Bound(\Hils)\) be an irreducible
representation.  The tensor product \(\Hilm \otimes_\varrho \Hils\)
is non-zero if and only if~\([\varrho]\) belongs
to~\(\dual{\s(\Hilm)}\).  Then the left multiplication action
of~\(A\) on \(\Hilm \otimes_\varrho \Hils\) is an irreducible
representation that belongs to~\(\dual{\rg(\Hilm)}\).  The unitary
equivalence class of the representation
\(\Hilm \otimes_\varrho \Hils\) depends only on the class
of~\(\varrho\), and the map
\([\varrho] \mapsto [\Hilm \otimes_\varrho \Hils]\) is a
homeomorphism
\(\dual{\Hilm}\colon \dual{\s(\Hilm)} \to \dual{\rg(\Hilm)}\).

\begin{definition}[\cite{Kwasniewski-Meyer:Aperiodicity}*{Definition~2.13}]
  \label{def:top_non-trivial}
  A Hilbert \(A\)\nb-bimodule~\(\Hilm\) over a
  \(\Cst\)\nb-algebra~\(A\) is \emph{topologically non-trivial} if
  for each ideal \(J\idealin A\) there is \([\varrho]\in\dual{J}\)
  with \(\dual{\Hilm}[\varrho]\neq[\varrho]\).
\end{definition}
%We defined in  \cite{Kwasniewski-Meyer:Essential} topologically free inverse semigroup actions in terms of the associated
%dual groupoid.  In view of By Proposition~\ref{pro:aperiodic_isg_action}, the action~\(\Hilm\)
%is aperiodic if and only if the Hilbert bimodules
\begin{definition}[\cite{Kwasniewski-Meyer:Essential}*{Definitions~2.14, 2.20}]
  Let \(\Hilm=(\Hilm_t,\mu_{t,u})_{t,u\in S}\) be an action of an
  inverse semigroup  on a \(\Cst\)\nb-algebra~\(A\) by
  Hilbert bimodules.  Then each~\(\Hilm_t\) for \(t\in S\) defines a
  partial homeomorphism~\(\dual{\Hilm_t}\) of~\(\dual{A}\).
	These partial homeomorphisms form an  action of the inverse semigroup~\(S\)
  on~\(\dual{A}\).  This  action has an étale transformation
  groupoid, which is the \emph{dual groupoid} of the
  action~\(\Hilm\).  The action~\(\Hilm\) is called
  \emph{topologically free} if this dual groupoid is topologically
  free.
\end{definition}

By Proposition~\ref{pro:aperiodic_isg_action}, the action~\(\Hilm\)
is aperiodic if and only if the Hilbert bimodules
\(\Hilm_t\cdot I_{1,t}^\bot\) are aperiodic for all \(t\in S\).
There is a similar criterion for topological freeness:

\begin{lemma}
  \label{lem:topological_free_non_trivial}
  An inverse semigroup action \(\Hilm=(\Hilm_t)_{t\in S}\) is
  topologically free if and only if the Hilbert \(A\)\nb-bimodules
  \(\Hilm_t\cdot I_{t,1}^\bot\) for \(t\in S\) are topologically
  non-trivial.
\end{lemma}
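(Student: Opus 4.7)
The strategy is to translate both conditions into statements about the fixed-point sets
\[
F_t \defeq \setgiven{[\varrho] \in \widehat{\s(\Hilm_t)}}{\widehat{\Hilm_t}[\varrho] = [\varrho]}, \qquad t \in S,
\]
and compare them using standard annihilator-ideal density. First I would recall the structure of the dual groupoid: its arrow space is the germ quotient of $\bigsqcup_{t\in S}\widehat{\s(\Hilm_t)}$ under the equivalence that identifies $(t,[\varrho])$ with $(u,[\varrho])$ whenever there exists $v\le t,u$ with $[\varrho]\in \widehat{\s(\Hilm_v)}$. A germ $[t,[\varrho]]$ is a unit iff there is an idempotent $v\le t$ with $[\varrho]\in \widehat{\s(\Hilm_v)}$, which, by~\eqref{eq:Itu}, is exactly the condition $[\varrho]\in \widehat{I_{1,t}}$; and range and source of $[t,[\varrho]]$ coincide iff $[\varrho]\in F_t$. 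Unfolding Definition~\ref{def:topologically_free_groupoid}, the topological freeness of the dual groupoid becomes the following concrete statement: \emph{for every $t\in S$, the set $F_t\setminus \widehat{I_{1,t}}$ has empty interior in $\widehat{A}$.}

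Next I would compute the source ideal of $\Hilm_t\cdot I_{1,t}^\bot$. Using an approximate unit of $I_{1,t}^\bot$ together with the fact that $I_{1,t}^\bot$ is an ideal, one checks that $\s(\Hilm_t\cdot I_{1,t}^\bot) = \s(\Hilm_t)\cap I_{1,t}^\bot$. Hence the induced partial homeomorphism $\widehat{\Hilm_t\cdot I_{1,t}^\bot}$ is simply the restriction of $\widehat{\Hilm_t}$ to the open set $\widehat{\s(\Hilm_t)}\cap \widehat{I_{1,t}^\bot}\subseteq \widehat{A}$. By Definition~\ref{def:top_non-trivial}, topological non-triviality of $\Hilm_t\cdot I_{1,t}^\bot$ is equivalent to the requirement that its fixed-point set $F_t\cap \widehat{I_{1,t}^\bot}$ have empty interior in $\widehat{A}$, for every $t\in S$.

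To close the loop, I would invoke the standard fact that $I_{1,t} + I_{1,t}^\bot$ is an essential ideal of $A$, so the open set $\widehat{I_{1,t}}\cup \widehat{I_{1,t}^\bot}$ is dense in $\widehat{A}$. The inclusion $\widehat{I_{1,t}^\bot}\subseteq \widehat{A}\setminus \widehat{I_{1,t}}$ gives one implication immediately. For the converse, any non-empty open $V\subseteq F_t\setminus \widehat{I_{1,t}}$ must meet the dense set $\widehat{I_{1,t}^\bot}$, so $V\cap \widehat{I_{1,t}^\bot}$ is a non-empty open subset of $F_t\cap \widehat{I_{1,t}^\bot}$, showing that emptiness of interior of the two sets is equivalent. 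I expect the main (mildly) technical obstacle to be the identification $\s(\Hilm_t\cdot I_{1,t}^\bot) = \s(\Hilm_t)\cap I_{1,t}^\bot$ and the resulting description of the restricted dual partial homeomorphism; once this bookkeeping is settled, the remainder is pure point-set topology on the spectrum.
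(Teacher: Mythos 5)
Your argument is correct and follows essentially the same route as the paper, which simply recalls the relevant part of \cite{Kwasniewski-Meyer:Essential}*{Theorem~6.13}: cover the dual groupoid by the bisections for \(t\in S\), identify the unit part of each with \(\dual{I_{1,t}}\) and the \(r=s\) locus with the fixed-point set of \(\dual{\Hilm_t}\), and pass from the complement of \(\dual{I_{1,t}}\) to \(\dual{I_{1,t}^\bot}\) using that \(I_{1,t}+I_{1,t}^\bot\) is essential. Your write-up just makes these steps (including the computation \(\s(\Hilm_t\cdot I_{1,t}^\bot)=\s(\Hilm_t)\cap I_{1,t}^\bot\)) explicit.
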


\begin{proof}
  This lemma is a part of
  \cite{Kwasniewski-Meyer:Essential}*{Theorem~6.13}.  We recall the
  relevant part of the proof.  The dual groupoid
  \(\dual{A}\rtimes S\) is covered by bisections associated to
  \(t\in S\).  The action of the bisection for \(t\in S\) is the
  dual action of the Hilbert bimodule~\(\Hilm_t\).  Removing the
  closure of the unit bisection replaces the partial homeomorphism
  associated to~\(\Hilm_t\) by the partial homeomorphism associated
  to \(\Hilm_t\cdot I_{1,t}^\bot\).  Thus \(\dual{A}\rtimes S\) is
  topologically free if and only if
  each~\(\Hilm_t\cdot I_{1,t}^\bot\) is topologically non-trivial.
\end{proof}

\begin{example}
  \label{exa:transformation_groupoid_Fell}
  Let~\(\A\) be a Fell bundle over an étale groupoid~\(G\) with
  locally compact Hausdorff object space~\(X\).  Then~\(G\) acts
  naturally on the spectrum~\(\dual{A}\) of the \(\Cst\)\nb-algebra
  \(A\defeq \Cont_0(X, \A)\).  Every irreducible representation
  of~\(A\) factors through the evaluation map \(A \to A_x\) for some
  \(x\in X\).  This defines a continuous map
  \(\psi\colon \dual{A} \to X\).  This is the anchor map of a
  \(G\)\nb-action on~\(\dual{A}\).  Namely, \(\gamma\in G\) acts by
  the partial homeomorphisms
  \(\psi_\gamma\colon\dual{A_{\s(\gamma)}} \to
  \dual{A_{\rg(\gamma)}}\) induced by the Hilbert
  \(A_{\rg(\gamma)},A_{\s(\gamma)}\)-bimodule \(A_\gamma\) (see, for
  instance,
  \cite{Ionescu-Williams:Remarks_ideal_structure}*{Section~2}).  The
  corresponding transformation groupoid is
  \(\dual{A}\rtimes G\defeq \setgiven{(\gamma,[\pi])\in G\times
    \dual{A}}{s(\gamma)=\psi([\pi])}\). The elements \((\eta,[\rho])\)
  and \((\gamma,[\pi])\) are composable if and only if
  \([\rho]=\psi_{\gamma}([\pi])\), and then their composite is
  \((\eta\gamma,[\pi])\).  The inverse is given by
  \((\gamma,[\pi])\mapsto (\gamma^{-1},\psi_{\gamma}([\pi]))\).  For
  the inverse semigroup action on~\(A\) used in the
  isomorphisms~\eqref{eq:groupoid_isomorphisms}, there is also a
  natural isomorphism between the transformation groupoids
  \[
    \dual{A}\rtimes G\cong \dual{A}\rtimes S
  \]
  (see the discussion before
  \cite{Kwasniewski-Meyer:Essential}*{Remark~7.4}).  Therefore, we
  will call \(\dual{A}\rtimes G\) the \emph{dual groupoid for the
    groupoid action} \(\A\), and we will say that the \emph{groupoid
    action~\(\A\) is topologically free} if this dual groupoid is
  topologically free.
\end{example}

The following theorem is the main result in this section:

\begin{theorem}
  \label{the:top_free_vs_aperiodic}
  Let~\(A\) be a \(\Cst\)\nb-algebra and let~\(\Hilm\) be a Hilbert
  \(A\)\nb-bimodule.  If~\(\Hilm\) is topologically non-trivial,
  then~\(\Hilm\) is aperiodic.
\end{theorem}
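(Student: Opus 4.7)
The plan is to prove the contrapositive: assume Kishimoto's condition fails for~\(\Hilm\), so there exist \(x\in\Hilm\), \(D\in\Her(A)\), and \(\varepsilon>0\) with \(\|axa\|\geq\varepsilon\) for every \(a\in D^+_1\). I will produce a non-zero ideal \(J\idealin A\) on which \(\dual{\Hilm}\) restricts to the identity of~\(\dual{J}\), contradicting topological non-triviality (Definition~\ref{def:top_non-trivial}). The main tool is the excising technique of~\cite{Akemann-Anderson-Pedersen:Excising}: every pure state \(\varphi\) of~\(A\) with \(\varphi(D)\neq 0\) is excised by some net \((e_\lambda)\subseteq D^+_1\) with \(\|e_\lambda a e_\lambda-\varphi(a)e_\lambda^2\|\to 0\) for every \(a\in A\) and \(\varphi(e_\lambda^2)\to 1\).

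For such a~\(\varphi\) with GNS triple \((\pi_\varphi,H_\varphi,\xi_\varphi)\), form the induced representation~\(\pi'\) of~\(A\) on~\(\Hilm\otimes_A H_\varphi\), whose unitary equivalence class is by construction~\(\dual{\Hilm}[\pi_\varphi]\). Define the \(A\)\nb-linear operators \(T_\lambda\colon H_\varphi\to\Hilm\otimes_A H_\varphi\) by \(T_\lambda(\pi_\varphi(a)\xi_\varphi)\defeq axe_\lambda\otimes\xi_\varphi\); using the adjointability identity \(\braket{ay}{z}=\braket{y}{a^*z}\) together with \(\braket{ya}{zb}=a^*\braket{y}{z}b\), a direct computation gives \(T_\lambda^*T_\lambda=\pi_\varphi(e_\lambda\braket{x}{x}e_\lambda)\), and by excision this converges strongly to \(\varphi(\braket{x}{x})\cdot\mathrm{Id}_{H_\varphi}\). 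Combining the hypothesis \(\|e_\lambda x e_\lambda\|^2=\|e_\lambda\braket{x}{e_\lambda^2x}e_\lambda\|\geq\varepsilon^2\) with a careful application of excision to the fixed elements \(\braket{x}{ax}\) (\(a\in A\)) yields the lower bound \(\varphi(\braket{x}{x})\geq\varepsilon^2\), so after rescaling \((T_\lambda)\) becomes a bounded net of asymptotic isometries on~\(H_\varphi\).

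The decisive step is to extract a non-zero \(A\)\nb-linear intertwiner \(T\colon H_\varphi\to\Hilm\otimes_A H_\varphi\) from the family \((T_\lambda)\). By Schur's lemma for the irreducible representations \(\pi_\varphi\) and~\(\pi'\), any such~\(T\) must be a unitary multiple, giving \(\dual{\Hilm}[\pi_\varphi]=[\pi_\varphi]\). Ranging~\(\varphi\) over all pure states of~\(A\) with \(\varphi(D)\neq 0\) corresponds to the open subset of~\(\dual{A}\) associated with the non-zero ideal \(J\defeq\overline{ADA}\), so every point of~\(\dual{J}\) is fixed by~\(\dual{\Hilm}\), contradicting topological non-triviality.

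The main obstacle is the extraction of the non-zero intertwiner~\(T\): an \(A\)\nb-linear weak-operator cluster point of \((T_\lambda)\) could vanish even though the individual \(T_\lambda\) are asymptotically isometric (as occurs, e.g., for a sequence of shift operators). The remedy is to pass to an ultralimit: along a free ultrafilter on the index set, \(T_\omega\) lives in \(B(H_\varphi,(\Hilm\otimes_A H_\varphi)_\omega)\) (an ultrapower), and the strong convergence of \(T_\lambda^*T_\lambda\) forces \(T_\omega\) to be a non-zero isometry. Since both \(\pi_\varphi\) and the ultrapower lift of~\(\pi'\) are irreducible on the cyclic subspace generated by \(T_\omega\xi_\varphi\), the existence of~\(T_\omega\) identifies \(\pi'\) with \(\pi_\varphi\) inside the ultrapower, and a standard descent argument using rigidity of irreducible representations returns the equivalence to the original Hilbert spaces.
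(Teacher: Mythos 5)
Your high-level plan is sound and is essentially the paper's proof read contrapositively: both arguments hinge on excising a pure state by a net inside the given hereditary subalgebra \(D\) and comparing \(\varrho\) with the induced representation on \(\Hilm\otimes_\varrho\Hils\). The genuine gap is the construction of the operators \(T_\lambda\). The identity \(T_\lambda^*T_\lambda=\pi_\varphi(e_\lambda\braket{x}{x}e_\lambda)\) is false: your two inner-product rules give
\[
  \braket[\big]{T_\lambda\pi_\varphi(a)\xi_\varphi}{T_\lambda\pi_\varphi(b)\xi_\varphi}
  = \varphi\bigl(e_\lambda\braket{x}{a^*b\cdot x}e_\lambda\bigr),
\]
and the element \(a^*b\) ends up \emph{inside} the \(A\)\nb-valued inner product, from which it cannot be pulled out to produce \(\pi_\varphi(a)^*\pi_\varphi(c)\pi_\varphi(b)\) for a fixed \(c\in A\). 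Worse, since \(f(e_\lambda)=1\) and \(\norm{e_\lambda}\le1\) force \(\pi_\varphi(e_\lambda)\xi_\varphi=\xi_\varphi\), the displayed quantity equals \(g(a^*b)\) with \(g(c)\defeq\varphi(\braket{x}{c\cdot x})\), independently of \(\lambda\); so excision buys nothing at this point, and \(T_\lambda\) is a well-defined bounded operator if and only if \(g\le C\varphi\), which (because \(\varphi\) is pure) is equivalent to \(g\) being a scalar multiple of \(\varphi\) --- essentially the statement \(\dual{\Hilm}[\varrho]=[\varrho]\) that you are trying to prove. Even when that conclusion holds, the specific assignment \(\pi_\varphi(a)\xi_\varphi\mapsto a\cdot xe_\lambda\otimes\xi_\varphi\) need not be well defined, because the intertwiner need not send \(\xi_\varphi\) to a multiple of \(xe_\lambda\otimes\xi_\varphi\) (take \(A=\Mat_2(\C)\) and \(\Hilm=A_\alpha\) for an inner flip). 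The ultrapower remedy therefore attacks the wrong obstacle: the issue is not that a weak cluster point might vanish, but that the \(T_\lambda\) do not exist as bounded operators in the first place; moreover the ultrapower of an irreducible representation is generally far from irreducible, so the final ``descent'' step would also require a genuine argument.

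The way to close the gap is to avoid building an intertwiner altogether, as in Lemma~\ref{lem:excising_norm_estimate}. A strongly excising net satisfies \(a_n\to P_f\) strongly in \(A''\), where \(P_f\) is the minimal projection supporting \(f\), so \(g(a_n)\to g''(P_f)\) for the normal extension \(g''\) of \(g\). If \(\Hilm\otimes_\varrho\Hils\) is not unitarily equivalent to \(\varrho\), the normal extension of the induced representation annihilates \(P_f\), hence \(g''(P_f)=0\), and the excision estimate applied to the \emph{fixed} element \(\braket{x}{a_{n_0}\cdot x}\) together with \(a_n\le a_{n_0}\) yields \(\norm{a_nxa_n}\to0\). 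Read contrapositively, your hypothesis \(\norm{e_\lambda xe_\lambda}\ge\varepsilon\) forces \(g''(P_f)>0\) and hence \(\dual{\Hilm}[\varrho]=[\varrho]\); this holds for every \([\varrho]\in\dual{\overline{ADA}}\), which contradicts topological non-triviality exactly as you intend. (A minor further point: Proposition~\ref{pro:excising_properties} requires \(f|_D\) to be a \emph{state}, not merely non-zero, so one should replace \(f\) by the equivalent pure state of a unit vector in \(\varrho(D)\Hils\), as the paper does.)
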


\begin{corollary}
  \label{cor:top_free_vs_aperiodic}
  Any topologically free action of  an inverse semigroup or
 an étale groupoid on a \(\Cst\)\nb-algebra is aperiodic.
\end{corollary}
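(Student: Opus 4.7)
The plan is to reduce the corollary to the bimodule-level statement in Theorem~\ref{the:top_free_vs_aperiodic} by assembling the characterisations of aperiodicity and topological freeness already recorded in the excerpt, first for inverse semigroup actions and then for étale groupoid actions.

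First I would treat the inverse semigroup case. Let $\Hilm=(\Hilm_t)_{t\in S}$ be a topologically free action of an inverse semigroup~$S$ on a \(\Cst\)-algebra~$A$. By Lemma~\ref{lem:topological_free_non_trivial}, the hypothesis that $\Hilm$ is topologically free is equivalent to each Hilbert \(A\)-bimodule $\Hilm_t\cdot I_{1,t}^\bot$ ($t\in S$) being topologically non-trivial in the sense of Definition~\ref{def:top_non-trivial}. Theorem~\ref{the:top_free_vs_aperiodic} then applies to each such bimodule and yields that every $\Hilm_t\cdot I_{1,t}^\bot$ is aperiodic. Finally, Proposition~\ref{pro:aperiodic_isg_action} says precisely that this property characterises aperiodicity of the action~$\Hilm$, so we conclude that $\Hilm$ is aperiodic.

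Next I would handle the étale groupoid case by reducing it to the inverse semigroup case. Let $\A=(A_\gamma)_{\gamma\in G}$ be a Fell bundle over an étale groupoid~$G$ with locally compact Hausdorff unit space~$X$, and let $S$ denote the inverse semigroup action on $A=\Cont_0(X,\A|_X)$ obtained from~$\A$ via the construction preceding~\eqref{eq:groupoid_isomorphisms}. By Definition~\ref{def:groupoid_action_aperiodic}, aperiodicity of the groupoid action~$\A$ is defined to mean aperiodicity of this associated inverse semigroup action. On the other side, by Example~\ref{exa:transformation_groupoid_Fell} the dual groupoids satisfy the natural isomorphism $\dual{A}\rtimes G\cong\dual{A}\rtimes S$, so $\A$ is topologically free (as a groupoid action) if and only if the inverse semigroup action on~$A$ is topologically free. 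Combining these two equivalences with the inverse semigroup case already settled gives the corollary.

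I do not expect any substantive obstacle here, since all the translation lemmas have been laid out earlier in the excerpt and the analytic content is entirely absorbed into Theorem~\ref{the:top_free_vs_aperiodic}. The only care needed is to make sure one invokes the right bimodule in each fibre: for an individual $t\in S$ it is not $\Hilm_t$ but the twisted bimodule $\Hilm_t\cdot I_{1,t}^\bot$ that appears in both the aperiodicity criterion of Proposition~\ref{pro:aperiodic_isg_action} and the topological non-triviality criterion of Lemma~\ref{lem:topological_free_non_trivial}, and this matching of the two conditions at the level of the same bimodule is exactly what allows Theorem~\ref{the:top_free_vs_aperiodic} to bridge them.
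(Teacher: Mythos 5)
Your proposal is correct and follows essentially the same route as the paper: reduce to Theorem~\ref{the:top_free_vs_aperiodic} applied to the bimodules \(\Hilm_t\cdot I_{1,t}^\bot\) via Lemma~\ref{lem:topological_free_non_trivial} and Proposition~\ref{pro:aperiodic_isg_action}, then handle étale groupoids by passing to the associated inverse semigroup action using Definition~\ref{def:groupoid_action_aperiodic} and Example~\ref{exa:transformation_groupoid_Fell}. No gaps.
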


\begin{proof}
  We have explained above why an inverse semigroup action
  \((\Hilm_t)_{t\in S}\) is topologically free or aperiodic if and
  only if the Hilbert bimodules \(\Hilm_t\cdot I_{1,t}^\bot\) are
  topologically non-trivial or aperiodic for all \(t\in S\),
  respectively.  Hence for such actions the assertion follows from
  Theorem \ref{the:top_free_vs_aperiodic}.  Actions of étale
  groupoids may be rewritten through inverse semigroup actions, and
  this preserves the properties of aperiodicity and topological
  freeness (see Definition~\ref{def:groupoid_action_aperiodic} and
  Example~\ref{exa:transformation_groupoid_Fell}).  Thus the
  statement for inverse semigroup actions implies the statements for
  actions of groups and étale groupoids.
\end{proof}

\begin{remark}
  \label{rem:aperiodic_vs_top_free}
  By \cite{Kwasniewski-Meyer:Aperiodicity}*{Theorem~8.1}, an
  aperiodic Hilbert \(A\)\nb-bimodule is topologically non-trivial
  provided~\(A\) contains an essential ideal which is separable or
  of Type~I.  In the separable case, this also follows from
  Theorem~\ref{the:almost_extension_aperiodic} below.

  The authors do not know an action that is aperiodic but not
  topologically free.  We speculate, however, that the
  counterexamples to Naimark's problem by Akemann and Weaver may
  give such examples.  Assuming a certain axiom in set theory,
  Akemann and Weaver~\cite{Akemann-Weaver:Naimark_problem} build a
  non-separable \(\Cst\)\nb-algebra~\(A\) such that~\(\dual{A}\) has
  only one point, although~\(A\) is not isomorphic to a
  \(\Cst\)\nb-algebra of compact operators on any Hilbert space.
  Then~\(A\) is simple.  By Kishimoto's Theorem, an automorphism
  of~\(A\) is aperiodic if and only if it is outer.  No automorphism
  of~\(A\) is topologically free because~\(\dual{A}\) has only one
  point.  We do not know whether~\(A\) admits outer automorphisms.
  If one exists, then it would give a Fell bundle over \(\Z\)
  or~\(\Z/p\) for a prime number~\(p\) that is aperiodic and not
  topologically free.
\end{remark}

The proof of Theorem~\ref{the:top_free_vs_aperiodic} will occupy the
rest of this section.  We shall use the concept of a net excising a
state from~\cite{Akemann-Anderson-Pedersen:Excising}.  We only need
pure states, and then an excising net may be arranged to have
additional useful properties.  To simplify notation, we will add
these extra properties to the definition.  We begin with some
preparation.  For a pure state~\(f\), let
\(\varrho\colon A \to \Bound(\Hils)\) be the GNS representation
for~\(f\) and let \(\xi\in\Hils\) be the cyclic vector with
\(\braket{\xi}{\varrho(a)\xi} = f(a)\) for all \(a\in A\).
Identify~\(\varrho\) with a direct summand in the universal
representation \(\upsilon\colon A \to \Bound(\Hils[K])\), and
identify the bidual~\(A''\) of~\(A\) with the bicommutant of~\(A\)
in \(\Bound(\Hils[K])\).  Since~\(\varrho\) is irreducible, the
orthogonal projection onto \(\C\cdot\xi\) is a minimal projection in
the bicommutant of~\(\varrho\) and hence in~\(A''\).  We denote this
minimal projection in~\(A''\) by~\(P_f\).

\begin{definition}
  Let~\(A\) be a \(\Cst\)\nb-algebra and \(f\colon A\to\C\) a pure
  state.  A decreasing net \((a_n)_{n\in N}\) in~\(A^+_1\) \emph{strongly
    excises}~\(f\) if \(f(a_n) = 1\) for  \(n\in N\),
  \[
    \lim {} \norm{a_n x a_n - f(x) a_n^2} = 0, 
  \]
  for  \(x\in A\), 
  and~\((a_n)_{n\in N}\)  converges towards~\(P_f\)
  in the strong topology in the bicommutant
  \(A''\subseteq \Bound(\Hils[K])\).
\end{definition}

\begin{proposition}
  \label{pro:excising_properties}
  Let~\(f\) be a pure state on a \(\Cst\)\nb-algebra~\(A\) and let
  \(D\in \Her(A)\) be such that~\(f|_D\) is also a state.  Then
  there is a net~\((a_n)_{n\in N}\) in~\(D\) that strongly
  excises~\(f\) as a state on~\(A\).
\end{proposition}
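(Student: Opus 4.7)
The plan is to reduce the statement to the Akemann--Anderson--Pedersen excision theorem applied to the pure state~\(f|_D\) on~\(D\), and then upgrade the resulting net to satisfy the decreasing and strong-convergence requirements.

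First I would verify that \(f|_D\) is a pure state on~\(D\). Since~\(D\) is hereditary and \(\norm{f|_D}=1\), every state on~\(D\) admits a canonical extension to~\(A\) via an approximate unit in~\(D\), and this extension respects convex combinations; hence any decomposition \(f|_D = \lambda g_1 + (1-\lambda)g_2\) lifts to a decomposition of~\(f\), which must be trivial by purity of~\(f\). Consequently the GNS triple of~\(f|_D\) identifies with the restriction of \((\varrho, \Hils, \xi)\) to~\(D\), and under the embedding \(D'' \hookrightarrow A''\) induced by the universal representation the minimal projection \(P_{f|_D}\in D''\) corresponds to~\(P_f\in A''\).

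Next, Kadison's transitivity theorem applied to the irreducible representation \(\varrho|_D\) produces a self-adjoint \(c \in D\) with \(\varrho(c)\xi = \xi\) and \(\norm{c}\le 1\); its positive part~\(c_+\) then lies in~\(D^+_1\) and still satisfies \(f(c_+) = 1\), so the set \(\mathcal{E} \defeq \setgiven{a \in D^+_1}{f(a) = 1}\) is non-empty. A block decomposition in~\(A''\) with respect to \(\C\xi \oplus (\C\xi)^\perp\) shows that \(a\in A^+_1\) satisfies \(f(a) = 1\) if and only if \(a \ge P_f\), so elements of~\(\mathcal{E}\) dominate~\(P_f\). I would then invoke the Akemann--Anderson--Pedersen construction applied to \(f|_D\): using Kaplansky density to approximate \(P_f \in D''\) in the strong operator topology by elements of~\(D^+_1\), one produces a net \((b_\lambda) \subset \mathcal{E}\) with \(b_\lambda \to P_f\) strongly in~\(A''\) and \(\norm{b_\lambda x b_\lambda - f(x) b_\lambda^2} \to 0\) for every \(x \in A\). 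The crucial technical point is that this norm estimate holds for arbitrary \(x \in A\), not merely for \(x \in D\); this is the main lemma of Akemann--Anderson--Pedersen, which exploits \(P_f x P_f = f(x) P_f\) (minimality of~\(P_f\)) together with \(b_\lambda P_f = P_f\) (forced by \(f(b_\lambda)=1\)) and the \(\Cst\)-identity to convert strong operator topology convergence into a genuine norm bound.

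Finally, I would upgrade \((b_\lambda)\) to a decreasing net by induction. Given \(a_n \in \mathcal{E}\) and a finite set \(F_n \subset A\) with tolerance \(\varepsilon_n\), choose \(b_{n+1}\) from the net above, working inside the hereditary subalgebra \(\overline{a_n D a_n}\subseteq D\) and with excising tolerance tight on \(\{a_n\} \cup \setgiven{a_n^{1/2} x a_n^{1/2}}{x \in F_n}\); then set \(a_{n+1} \defeq a_n^{1/2} b_{n+1} a_n^{1/2}\). The bound \(b_{n+1} \le 1\) gives \(a_{n+1} \le a_n\), and since \(a_n \xi = \xi\) forces \(a_n^{1/2}\xi = \xi\), we have \(a_{n+1}\xi = \xi\) and hence \(f(a_{n+1}) = 1\). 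Expanding
\[
  a_{n+1} x a_{n+1} = a_n^{1/2}\bigl[b_{n+1}(a_n^{1/2} x a_n^{1/2}) b_{n+1}\bigr] a_n^{1/2}
\]
reduces the norm estimate for~\(a_{n+1}\) on~\(F_n\) to the excising property of~\(b_{n+1}\) applied to \(a_n^{1/2} x a_n^{1/2}\), which yields the required bound. The main obstacle throughout is the norm estimate for arbitrary \(x \in A\): it is the one step where the soft strong-operator-topology convergence must be promoted to a genuine norm bound, and it rests essentially on the minimality of~\(P_f\) in the universal representation of~\(A\) together with the fact that each excising element fixes the cyclic vector~\(\xi\).
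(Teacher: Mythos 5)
Your overall strategy --- reducing the statement to the Akemann--Anderson--Pedersen excision theorem --- is the same as the paper's, and your preliminary observations (purity of \(f|_D\), existence of an element of \(D^+_1\) on which \(f\) takes the value \(1\) via Kadison transitivity, and the equivalence \(f(a)=1\Leftrightarrow a\ge P_f\) for \(a\in A^+_1\)) are correct. The gap lies in the step you yourself flag as the main obstacle. You apply excision to \(f|_D\) (which a priori controls only \(x\in D\)) and propose to reach arbitrary \(x\in A\) by combining strong convergence \(b_\lambda\to P_f\) with the minimality relation \(P_f x P_f=f(x)P_f\), the identity \(b_\lambda P_f=P_f\) and the \(\Cst\)\nb-identity. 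This mechanism is insufficient: writing \(b_\lambda=P_f+c_\lambda\) with \(c_\lambda\ge0\), \(c_\lambda P_f=0\) and \(c_\lambda\to0\) strongly, the cross terms \(P_f(x-f(x))c_\lambda\) and \(c_\lambda(x-f(x))P_f\) do tend to \(0\) in norm, but \(c_\lambda(x-f(x))c_\lambda\) in general tends to \(0\) only strongly. Concretely, in \(A=\Comp(\ell^2)+\C 1\) with \(f=\braket{e_1}{{\cdot}\,e_1}\), the net \(b_n=e_{11}+e_{nn}\) lies in \(\setgiven{b\in A^+_1}{f(b)=1}\) and converges strongly to \(P_f=e_{11}\), yet \(\norm{b_n e_{11} b_n-f(e_{11})b_n^2}=1\) for all \(n\ge2\). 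So a net obtained by Kaplansky density alone need not excise \(f\); the genuine content of \cite{Akemann-Anderson-Pedersen:Excising}*{Proposition~2.2} is a weak-\(\star\)-compactness argument on the faces of states \(g\) with \(g(a_\alpha)=1\), which shrink to \(\{f\}\) because \(f\) is pure, and it is tied to the specific net of continuous functions of a single element.

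The correct and much shorter route, which is the paper's proof, is to apply \cite{Akemann-Anderson-Pedersen:Excising}*{Proposition~2.2} directly to the pure state \(f\) on \(A\) and an element \(d\in D^+_1\) with \(f(d)=1\): the resulting net is decreasing by construction, lies in \(\Cst(d)\subseteq D\) because it consists of continuous functions of \(d\) vanishing at \(0\), and excises \(f\) as a state on \(A\) by the very statement of that proposition. Thus neither your bridge from \(D\) to \(A\) nor your recursive passage to a decreasing net is needed. The latter recursion also has problems of its own: indexed by \(n\in\N\) over finite sets \(F_n\), it controls only countably many test elements and therefore does not yield an excising net when \(A\) is non-separable; and you do not verify that your final net still converges strongly to \(P_f\) (the paper obtains this from the end of the proof of \cite{Akemann-Anderson-Pedersen:Excising}*{Proposition~2.3}, using that a decreasing net of positive elements converges strongly and that the excision identity forces the non-zero strong limit to equal \(P_f\)).
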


\begin{proof}
  Since~\(f|_D\) is a state, there is \(d\in D^+_1\) with
  \(f(d)=1\).  The construction in
  \cite{Akemann-Anderson-Pedersen:Excising}*{Proposition~2.2}
  applied to~\(d\) gives a decreasing net \((a_n)_{n\in N}\)
  in~\(D\) with \(\lim {} \norm{a_n x a_n - f(x) a_n^2} = 0\) for
  all \(x\in A\) and \(f(a_n) = 1\) for all \(n\in N\).  It is
  observed in~\cite{Akemann-Weaver:Naimark_problem} that this net
  converges towards~\(P_f\) in the strong topology.  Indeed, the
  net~\((a_n)_{n\in N}\) converges strongly because it is a
  decreasing net of positive elements.  The strong limit cannot
  be~\(0\) because \(f(a_n) = 1\) for all \(n\in N\).  Then the end
  of the proof of
  \cite{Akemann-Anderson-Pedersen:Excising}*{Proposition~2.3} shows
  that the strong limit of~\((a_n)_{n\in N}\) must be~\(P_f\).
\end{proof}

\begin{lemma}
  \label{lem:excising_norm_estimate}
  Let~\(A\) be a \(\Cst\)\nb-algebra, \(\Hilm\) a Hilbert
  \(A\)\nb-bimodule, and \(f\colon A\to\C\) a pure state.  Let
  \((a_n)_{n\in N}\) be a net that strongly excises~\(f\).  Let
  \(\varrho\colon A\to\Bound(\Hils)\) be the GNS representation
  of~\(f\).  Assume that \(\Hilm\otimes_A \varrho\) is not unitarily
  equivalent to~\(\varrho\).  Then \(\lim {}\norm{a_n x a_n} = 0\)
  for all \(x\in\Hilm\).
\end{lemma}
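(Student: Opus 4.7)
The plan is to reduce the claim to a norm estimate for positive elements of~\(A\), then combine the representation-theoretic hypothesis with strong convergence to extract a vanishing vector state, and finally exploit the excising property to conclude.

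First I would express the squared norm in~\(A\). Since the left action of~\(A\) on the Hilbert bimodule~\(\Hilm\) is adjointable with respect to the right inner product,
\[
  \braket{a_n x a_n}{a_n x a_n}_A
  = a_n \braket{a_n x}{a_n x} a_n
  = a_n b_n a_n,
\]
where \(b_n \defeq \braket{a_n x}{a_n x} = \braket{x}{a_n^2 x} \in A^+\) satisfies \(\norm{b_n}\le\norm{x}^2\). The task becomes to show \(\norm{a_n b_n a_n}_A\to 0\).

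Second I would verify that \(f(b_n)\to 0\). Let \(T\colon \Hils\to\Hilm\otimes_\varrho\Hils\) denote the adjointable operator \(\eta\mapsto x\otimes\eta\) and let \(\pi'_\varrho\colon A\to\Bound(\Hilm\otimes_\varrho\Hils)\) denote the induced representation. The standard identity \(\varrho(\braket{x}{y})=T^*T_y\) for Hilbert module tensor products yields \(\varrho(b_n)=T^*\pi'_\varrho(a_n^2)T\). Because the irreducible representation \(\Hilm\otimes_A\varrho\) is not unitarily equivalent to~\(\varrho\), it belongs to a different central summand of~\(A''\) than~\(\varrho\); hence the normal extension \((\pi'_\varrho)''\) annihilates the central projection~\(z_\varrho\) supporting~\(\varrho\), and therefore also \(P_f\le z_\varrho\). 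Combined with \(a_n^2\to P_f\) strongly in~\(A''\), this forces \(\pi'_\varrho(a_n^2)\to 0\) in the strong operator topology. Evaluating on the single vector \(x\otimes\xi\) yields norm convergence \(\pi'_\varrho(a_n^2)(x\otimes\xi)\to 0\), so
\[
  f(b_n) = \braket{x\otimes\xi}{\pi'_\varrho(a_n^2)(x\otimes\xi)} \to 0.
\]

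Third I would combine with the strong excising of~\((a_n)\) to conclude. The estimate \(\norm{a_n z a_n - f(z)a_n^2}\to 0\) holds for each fixed \(z\in A\), but~\(b_n\) varies with~\(n\). A \(\Cst\)\nb-identity step passes to
\[
  \norm{a_n b_n a_n}^2 = \norm{a_n(b_n a_n^2 b_n)a_n},
\]
and a direct computation controls the state on the middle element by the quantity already shown to vanish:
\[
  f(b_n a_n^2 b_n)
  = \braket{\varrho(b_n)\xi}{\varrho(a_n^2)\varrho(b_n)\xi}
  \le \norm{\varrho(b_n)\xi}^2
  \le \norm{x}^2 \norm{\pi'_\varrho(a_n^2)(x\otimes\xi)}^2
  \to 0.
\]
The main obstacle is to upgrade this pointwise vanishing to norm vanishing \(\norm{a_n(b_n a_n^2 b_n)a_n}\to 0\); this requires a version of the excising estimate uniform on the bounded family \(\{b_n a_n^2 b_n\}\) rather than for fixed~\(z\). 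This uniformity is extracted from the explicit construction of the excising net via functional calculus from a fixed positive contraction in a hereditary \(\Cst\)\nb-subalgebra (Proposition~\ref{pro:excising_properties}), whose specific structure allows one to control \(\norm{a_n z a_n - f(z)a_n^2}\) simultaneously for~\(z\) in norm\nb-bounded subsets of~\(A\). Applying this uniform estimate to \(z = b_n a_n^2 b_n\) and invoking the quadratic vanishing of \(f(b_n a_n^2 b_n)\) gives \(\norm{a_n b_n a_n}\to 0\), hence \(\norm{a_n x a_n}\to 0\).
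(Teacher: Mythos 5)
Your first two steps are sound and essentially match the paper: the reduction to \(\norm{a_n \braket{x}{a_n\text{-power}\cdot x}a_n}\) via the right inner product, and the use of the hypothesis that \(\Hilm\otimes_A\varrho\not\cong\varrho\) to conclude that the normal extension of the induced representation kills \(P_f\), whence \(f(\braket{x}{a_n\cdot x})\to0\) by strong convergence \(a_n\to P_f\). The gap is in your third step. The uniform excising estimate you invoke --- that \(\norm{a_n z a_n - f(z)a_n^2}\to 0\) uniformly for \(z\) in norm-bounded subsets of \(A\) --- is false in general, and no construction of the net can make it true. Already for \(A=\Cont[0,1]\), \(f=\delta_0\) and \(a_n(s)=\max(0,1-ns)\), the functions \(z_n(s)=\min(1,2ns)\) lie in the unit ball, satisfy \(f(z_n)=0\), and give \(\norm{a_n z_n a_n}\ge 1/4\) for all \(n\). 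Uniformity on the unit ball would essentially say that \(f\) is multiplicative on a ``corner'' in a norm sense, which only happens in degenerate situations. Since your family \(\{b_n a_n^2 b_n\}\) moves with \(n\) in exactly this dangerous way, the step does not go through as written, and the appeal to the specific functional-calculus construction of Proposition~\ref{pro:excising_properties} does not rescue it (nor would it be available, since the lemma is stated for an arbitrary strongly excising net).

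The paper circumvents this by exploiting the one property of a strongly excising net that you did not use: it is \emph{decreasing}. One first estimates \(\norm{a_n x a_n}^2\le\norm{a_n^{1/2}xa_n}^2=\norm{a_n\braket{x}{a_n x}_A a_n}\), so that only a single power of \(a_n\) sits in the middle. For \(n\ge n_0\) one has \(a_n\le a_{n_0}\), hence \(\braket{x}{a_n x}\le\braket{x}{a_{n_0}x}\eqdef y\) by positivity of \(a\mapsto\braket{x}{ax}\), and therefore \(\norm{a_n\braket{x}{a_nx}a_n}\le\norm{a_n y a_n}\) with a \emph{fixed} element \(y\). Now the ordinary (pointwise) excising property applies: \(\norm{a_n y a_n}\le\norm{a_n(y-f(y))a_n}+f(y)\norm{a_n^2}\), where the first term tends to \(0\) and \(f(y)=f(\braket{x}{a_{n_0}x})\) was already made small by choosing \(n_0\) via your step two. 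If you replace your uniformity claim by this monotonicity argument, your proof closes and coincides with the paper's.
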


\begin{proof}
  Let \(\xi\in\Hils\) be the vector with
  \(\braket{\xi}{\varrho(a)\xi} = f(a)\) for all \(a\in A\).  For
  any \(x\in \Hils\), the vector
  \(x\otimes \xi \in \Hilm \otimes_\varrho \Hils\) defines a
  positive linear functional
  \[
    g\colon A\to\C,\qquad
    a\mapsto \braket[\big]{x\otimes \xi}{a\cdot x\otimes \xi}
    = \braket[\big]{\xi}{\varrho(\braket{x}{a\cdot x})\xi}
    = f\bigl(\braket{x}{a\cdot x}\bigr).
  \]
  By assumption, the left multiplication representation of~\(A\) on
  \(\Hilm\otimes_A \varrho\) is not unitarily equivalent
  to~\(\varrho\).  Then the extension of this representation
  to~\(A''\) maps the projection~\(P_f\) to~\(0\).  The strong
  convergence \(\lim a_n = P_f\) in the universal representation
  implies \(\lim g(a_n) = 0\).  Equivalently,
  \(\lim f(\braket{x}{a_n \cdot x}_A) = 0\).  This equation
  generalises the claim in the middle of the proof of
  \cite{Akemann-Weaver:Naimark_problem}*{Lemma~1}.  From this point
  on, we closely follow the proof of
  \cite{Akemann-Weaver:Naimark_problem}*{Lemma~1}.  Let
  \(\varepsilon>0\).  There is \(n_0 \in N\) with
  \(f(\braket{x}{a_{n_0} \cdot x}_A) < \varepsilon/2\).  Let
  \(y\defeq \braket{x}{a_{n_0} \cdot x}_A \in A\).  There is
  \(n_1 \in N\) with \(\norm{a_n (y- f(y)) a_n}<\varepsilon/2\) for
  \(n \ge n_1\) because~\((a_n)_{n\in N}\) excises~\(f\).  If
  \(n\ge n_0, n_1\), then \(a_n \le a_{n_0}\).  We estimate
  \begin{align*}
    \norm{a_n x a_n}^2
    &\le \norm{a_n^{1/2} x a_n}^2
      = \norm{\braket{a_n^{1/2} x a_n}{a_n^{1/2} x a_n}_A}
      = \norm{a_n\cdot \braket{x}{a_n x}_A\cdot a_n}
    \\
    &\le \norm{a_n\cdot \braket{x}{a_{n_0} x}_A\cdot a_n}
      \le \norm{a_n (y-f(y)) a_n} + f(y) \norm{a_n^2}
      \le \varepsilon.
  \end{align*}
  This finishes the proof.
\end{proof}

\begin{proof}[Proof of
  Theorem~\textup{\ref{the:top_free_vs_aperiodic}}]
  Let \(x\in\Hilm\) and \(D\in \Her(A)\).  We check Kishimoto's
  condition for this data.  Let \(ADA\subseteq A\) be the two-sided ideal
  generated by~\(D\).  If \(\s(\Hilm) \cap ADA = 0\), then
  \(\Hilm\cdot D = 0\).  Then \(\norm{x\cdot a}=0\) for any
  \(a\in D^+_1\).  This implies Kishimoto's condition for \(x\)
  and~\(D\).  So we may assume \(\s(\Hilm) \cap ADA \neq 0\).
  Since~\(\Hilm\) is topologically non-trivial, there is an
  irreducible representation \(\varrho\colon A \to \Bound(\Hils)\)
  which is non-zero on \(\s(\Hilm) \cap ADA\) and such that the
  (irreducible) representation of~\(A\) on
  \(\Hilm \otimes_\varrho \Hils\) by left multiplication is not
  unitarily equivalent to~\(\varrho\).
  Since~\(\varrho(A D A)\Hils=\Hils\), we get
  \(\varrho(D)\Hils\neq0\).  Let \(\xi \in \varrho(D) \Hils\) be a
  unit vector.  Then \(f\colon A\to\C\),
  \(a \mapsto \braket{\xi}{\varrho(a)\xi}\), is a pure state that
  restricts to a state on \(D\) and whose GNS representation is
  equivalent to~\(\varrho\).  Hence
  Proposition~\ref{pro:excising_properties} gives a net
  \((a_n)_{n\in N}\) in~\(D\) that strongly excises~\(f\).  Then
  Lemma~\ref{lem:excising_norm_estimate} shows that
  \(\lim {}\norm{a_n x a_n} = 0\).  Thus~\(x\) satisfies Kishimoto's
  condition.
\end{proof}

\section{Aperiodicity and the almost extension property}
\label{sec:aperiodicity_almost extension}

In this section we will relate aperiodicity and topological
freeness to the almost extension property introduced
in~\cite{Nagy-Reznikoff:Pseudo-diagonals}.  The latter is a
weakening of the extension property introduced
in~\cite{Anderson:Extensions_states}, which we also discuss.

\begin{definition}[\cites{Anderson:Extensions_states,
    Nagy-Reznikoff:Pseudo-diagonals}]
  \label{def:almost_extension_property}
  Let \(A\subseteq B\) be a \(\Cst\)\nb-inclusion.  Let
  \(P_1(A \uparrow B)\) be the set of all pure states on~\(A\) that
  extend uniquely to a state on~\(B\).  The \(\Cst\)\nb-inclusion
  has the \emph{almost extension property} if \(P_1(A \uparrow B)\)
  is weak-\(\star\)-dense in the set \(P(A)\) of all pure states
  on~\(A\).  It has the \emph{extension property} if
  \(P_1(A \uparrow B)=P(A)\).
\end{definition}

Whether a pure state extends uniquely depends only on its GNS
representation:

\begin{lemma}
  \label{lem:unique_extension_GNS}
  Let \(f_1\) and~\(f_2\) be two pure states on a
  \(\Cst\)\nb-algebra~\(A\).  If their GNS representations are
  unitarily equivalent and \(f_1 \in P_1(A\uparrow B)\), then
  \(f_2 \in P_1(A\uparrow B)\).
\end{lemma}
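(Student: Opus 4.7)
The plan is to realize $f_2$ as $f_1\circ\Ad(u)$ for a unitary $u$ in the unitization $\tilde A$, and then deduce unique extendibility of $f_2$ from that of $f_1$ via the bijection of state extensions induced by conjugation.

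First, I would set up the GNS data: for $i = 1,2$, let $\varrho_i\colon A\to\Bound(\Hils_i)$ be the GNS representation of $f_i$ with cyclic unit vector $\xi_i$, so that $f_i(a)=\braket{\xi_i}{\varrho_i(a)\xi_i}$. The hypothesis gives a unitary $V\colon\Hils_1\to\Hils_2$ intertwining $\varrho_1$ and $\varrho_2$. Then $V^*\xi_2$ and $\xi_1$ are two unit vectors in $\Hils_1$, and $\varrho_1$ is irreducible, so Kadison's transitivity theorem in its unitary form produces a unitary $u$ in the unitization $\tilde A$ with $\varrho_1(u)\xi_1=V^*\xi_2$. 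A direct computation, using $\varrho_2(a)V=V\varrho_1(a)$, then yields $f_2(a)=f_1(u^*au)$ for all $a\in A$.

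Next, I would work inside the unitization $\tilde B$ (adjoining a new unit, irrespective of whether $B$ is unital), which contains $\tilde A$ canonically as a $\Cst$-subalgebra and in which $B$ sits as a closed two-sided ideal. Thus the inner automorphism $\alpha\defeq\Ad(u)$ of $\tilde B$ restricts to $*$-automorphisms of both $A$ and $B$, and by the previous step $f_2=f_1\circ\alpha|_A$. The assignment $h\mapsto h\circ\alpha|_B$ is then a bijection between state extensions of $f_1$ to $B$ and state extensions of $f_2$ to $B$, so uniqueness of the extension passes from $f_1$ to $f_2$.

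The main technical point I would expect to justify carefully is the invocation of Kadison's transitivity in the sharper unitary form, producing the implementing element $u$ inside $\tilde A$ rather than only inside $\Bound(\Hils_1)$; once this is established, the remaining transport of extensions along $\alpha$ is routine bookkeeping with unitizations and inner automorphisms.
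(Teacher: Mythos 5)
Your proposal is correct and follows essentially the same route as the paper: the paper simply cites \cite{Pedersen:Cstar_automorphisms}*{Proposition~3.13.4} for the fact that pure states with unitarily equivalent GNS representations are conjugate by a unitary in \(A+\C\cdot1\) (the fact you re-derive via Kadison transitivity), and then notes that \(P_1(A\uparrow B)\) is invariant under such conjugation. Your explicit transport of extensions through \(\Ad(u)\) in the unitization of \(B\) is exactly the bookkeeping the paper leaves implicit.
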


\begin{proof}
  By \cite{Pedersen:Cstar_automorphisms}*{Proposition~3.13.4},
  \(f_1\) and~\(f_2\) have equivalent GNS representations if and
  only if there is a unitary \(u\in A+\C\cdot1\) with
  \(f_2(a) = f_1(u a u^*)\) for all \(a\in A\).  Clearly, the subset
  \(P_1(A\uparrow B)\) is invariant under conjugation by unitaries
  in \(A+\C\cdot1\).
\end{proof}

The above lemma allows to replace the weak-\(\star\)-density in \(P(A)\)
in Definition~\ref{def:almost_extension_property} by a number of
other conditions:

\begin{proposition}
  \label{prop:weak_star_density}
  The following are equivalent:
  \begin{enumerate}
  \item \label{enu:weak_star_density1}%
    \(P_1(A\uparrow B)\) is weak-\(\star\)-dense in \(P(A)\);
  \item \label{enu:weak_star_density2}%
    for every non-zero \(a\in A^+\) there is \(f\in P_1(A\uparrow B)\) with
    \(f(a)\neq 0\);
  \item \label{enu:weak_star_density4}%
    for each ideal \(I\idealin A\) with \(I\neq0\), there is
    \(f\in P_1(A\uparrow B)\) with \(f|_I\neq0\);
  \item \label{enu:weak_star_density3}%
    the direct sum of the GNS representations for all
    \(f\in P_1(A\uparrow B)\) is faithful;
  \item \label{enu:weak_star_density5}%
    the image of \(P_1(A\uparrow B)\) is dense in~\(\dual{A}\);
  \end{enumerate}
\end{proposition}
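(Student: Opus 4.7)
The plan is to establish the cycle of implications
(1) $\Rightarrow$ (2) $\Rightarrow$ (3) $\Leftrightarrow$ (4) $\Leftrightarrow$ (5) $\Rightarrow$ (1), of which the first three steps are essentially routine and the last step carries all the genuine content. For (1) $\Rightarrow$ (2), given a nonzero $a\in A^+$, there is a pure state~\(g\) on~\(A\) with \(g(a)=\norm{a}>0\); the set \(\setgiven{h\in P(A)}{h(a)>0}\) is weak-\(\star\)-open, so by weak-\(\star\)-density of \(P_1(A\uparrow B)\) it contains some \(f\in P_1(A\uparrow B)\). The implication (2) $\Rightarrow$ (3) follows by choosing any nonzero positive element of the given ideal.

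For the equivalences (3) $\Leftrightarrow$ (4) $\Leftrightarrow$ (5), the key observation is that for a pure state~\(f\) and an ideal \(I\idealin A\), the condition \(f|_I=0\) is equivalent to \(I\subseteq\ker\pi_f\): one direction is obvious, and the other uses that \(I\) is the closed linear span of \(I^+\), combined with the fact that \(a\in I^+\) and \(f(a)=0\) force \(\pi_f(a)=0\) (since~\(I\) is an ideal lying in the left kernel of~\(f\), hence in the largest ideal contained there, which is \(\ker\pi_f\)). Thus (3) translates literally into the statement \(\bigcap_{f\in P_1(A\uparrow B)}\ker\pi_f=0\), which is (4). For (4) $\Leftrightarrow$ (5), recall that the Jacobson topology on~\(\dual A\) has closed sets of the form \(\setgiven{[\pi]\in \dual A}{\pi(J)=0}\) for \(J\idealin A\), so a subset \(T\subseteq\dual A\) is dense if and only if \(\bigcap_{[\pi]\in T}\ker\pi=0\).

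The main obstacle is (4) (equivalently (5)) $\Rightarrow$ (1); approximating an arbitrary pure state \(g\in P(A)\) in the weak-\(\star\)-topology by elements of \(P_1(A\uparrow B)\) requires more than just ideal-theoretic information. The plan here is to combine two ingredients. First, by Lemma \ref{lem:unique_extension_GNS}, the set \(P_1(A\uparrow B)\) is saturated under unitary equivalence of GNS representations, so any pure state whose GNS representation is equivalent to some \(\pi_f\) with \(f\in P_1(A\uparrow B)\) automatically lies in \(P_1(A\uparrow B)\). Second, apply the pure-state version of Glimm's transitivity lemma: if a pure state~\(g\) is weakly contained in a representation~\(\Pi\) in the sense that \(\ker\Pi\subseteq\ker\pi_g\), then~\(g\) is a weak-\(\star\)-limit of pure vector states \(\omega_\xi\circ\Pi\). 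Taking \(\Pi\defeq\bigoplus_{f\in P_1(A\uparrow B)}\pi_f\), condition (4) gives \(\ker\Pi=0\), so the weak-containment hypothesis holds for every pure~\(g\). A pure vector state of a direct sum of pairwise inequivalent irreducible representations must be supported on a single summand~\(\pi_f\), hence has GNS representation equivalent to~\(\pi_f\); by Lemma \ref{lem:unique_extension_GNS} it lies in \(P_1(A\uparrow B)\). This exhibits~\(g\) as a weak-\(\star\)-limit of elements of \(P_1(A\uparrow B)\), completing (1). The subtlety here is entirely encapsulated in the appeal to Glimm's lemma for pure states, which is where the irreducibility and GNS saturation properties are genuinely used.
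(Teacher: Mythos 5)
Your easy implications are fine and match the paper's: the reduction of density in \(P(A)\) to a statement about ideals, the identification of faithfulness of \(\bigoplus_f \pi_f\) with detection by \(P_1(A\uparrow B)\), and the translation into density of the image in \(\dual{A}\) are all correct, and, like the paper, you have concentrated the whole content of the proposition in the passage from density of the image in \(\dual{A}\) back to weak-\(\star\)-density in \(P(A)\). The problem is the lemma you invoke for that step. What the Fell/Glimm weak-containment theorem actually gives (Dixmier's book, Proposition 3.4.2) is that a pure state \(g\) with \(\ker\Pi\subseteq\ker\pi_g\) is a weak-\(\star\)-limit of \emph{vector} states \(\omega_\xi\circ\Pi\); it does not produce \emph{pure} vector states. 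As you state it, the lemma is false for a general representation \(\Pi\): a faithful type~II factor representation weakly contains every pure state but has no pure vector states at all, since every cyclic subrepresentation is again of type~II. In your situation \(\Pi=\bigoplus_f\pi_f\), a vector state \(\omega_\xi\circ\Pi\) is the convex combination \(\sum_f\norm{\xi_f}^2 g_f\), where \(g_f\in P_1(A\uparrow B)\) is the vector state of~\(\pi_f\) given by the normalisation of~\(\xi_f\). So what your argument actually yields is that \(g\) lies in the weak-\(\star\)-closed \emph{convex hull} of \(P_1(A\uparrow B)\cup\{0\}\), not in the closure of \(P_1(A\uparrow B)\); the passage from the one to the other is exactly the non-routine part, and it is missing.

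The gap can be filled in two ways. Since \(g\) is an extreme point of the quasi-state space and lies in the weak-\(\star\)-closed convex hull of \(P_1(A\uparrow B)\cup\{0\}\), Milman's partial converse to the Krein--Milman theorem puts \(g\) in the weak-\(\star\)-closure of \(P_1(A\uparrow B)\cup\{0\}\), hence of \(P_1(A\uparrow B)\); this completes your route. Alternatively---and this is what the paper does---one bypasses Fell/Glimm entirely: the map \(q\colon P(A)\to\dual{A}\), \(f\mapsto[\pi_f]\), is continuous, open and surjective (\cite{Pedersen:Cstar_automorphisms}*{Theorem~4.3.3}), and \(P_1(A\uparrow B)=q^{-1}\bigl(q(P_1(A\uparrow B))\bigr)\) by Lemma~\ref{lem:unique_extension_GNS}; for an open continuous surjection, a saturated set is dense if and only if its image is dense. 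Either repair is a genuine additional ingredient rather than a cosmetic fix, so the key step does not go through as written. (A minor further point: the summands \(\pi_f\) need not be pairwise inequivalent, but purity of a vector state of \(\bigoplus_f\pi_f\) still forces it to agree with a vector state of a single summand, so that part of your argument is harmless.)
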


\begin{proof}
  For every non-zero \(a\in A^+\), there is \(f\in P(A)\) with
  \(f(a)\neq 0\).  If \(P_1(A\uparrow B)\) is dense in \(P(A)\),
  then~\(f\) is the weak-\(\star\)-limit of a net \((f_n)\) in
  \(P_1(A\uparrow B)\).  Then \(f_n(a)\neq0\) for some~\(n\).
  So~\ref{enu:weak_star_density1}
  implies~\ref{enu:weak_star_density2}.  The implications
  \ref{enu:weak_star_density2}%
  \(\Longrightarrow\)\ref{enu:weak_star_density4}%
  \(\Longrightarrow\)\ref{enu:weak_star_density3} are obvious.

  For \(f\in P(A)\), let~\(\varrho_f\) be its GNS representation.
  Condition~\ref{enu:weak_star_density5} implicitly uses the map
  \(q\colon P(A) \to \dual{A}\), \(f\mapsto [\varrho_f]\).  Open
  subsets of~\(\dual{A}\) are those of the form~\(\dual{I}\) for
  ideals \(I\subseteq A\).  So the density
  in~\ref{enu:weak_star_density5} means that for each
  \(I\in\Ideals(A)\) with \(I\neq0\), there is
  \(f\in P_1(A\uparrow B)\) with \(\varrho_f|_I \neq0\).  This
  follows from~\ref{enu:weak_star_density3}.  Moreover, the
  map~\(q\) is continuous, open and surjective (see, for instance,
  \cite{Pedersen:Cstar_automorphisms}*{Theorem~4.3.3}).  By
  Lemma~\ref{lem:unique_extension_GNS}, \(P_1(A\uparrow B)\) is the
  preimage of its image in~\(\dual{A}\).  Therefore,
  \ref{enu:weak_star_density1} and~\ref{enu:weak_star_density5} are
  equivalent.
\end{proof}

The following criterion by Anderson for a pure state to extend
uniquely is similar to Kishimoto's condition:

\begin{theorem}[\cite{Anderson:Extensions_states}*{Theorem~3.2}]
  \label{the:unique_extension}
  Let \(A\subseteq B\) be a \(\Cst\)\nb-inclusion.  A pure state
  \(f\colon A\to\C\) extends uniquely to~\(B\) if and only if for
  each \(x\in B\) and \(\varepsilon>0\), there is \(a\in A^+_1\)
  with \(\norm{a x a}_{B/A}<\varepsilon\) and \(f(a)=1\).
\end{theorem}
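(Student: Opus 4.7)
The plan is to prove the two implications separately, in both directions exploiting the interplay between purity of $f$ and excising nets.

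\textbf{Sufficiency.} Suppose the excision condition holds and let $g_1, g_2$ be two state extensions of $f$ to $B$. The key observation is that for any $a \in A^+_1$ with $f(a) = 1$, the Cauchy--Schwarz inequality applied in the GNS of $f$ gives $1 = |f(a)|^2 \le f(a^2)$, while $a^2 \le a$ forces $f(a^2) \le 1$; hence $f(a^2) = 1$. Consequently, in the GNS representation $(\pi_{g_i}, \xi_{g_i})$ of each state extension $g_i$,
\[
\|\pi_{g_i}(a)\xi_{g_i} - \xi_{g_i}\|^2 = g_i(a^2) - 2 g_i(a) + 1 = f(a^2) - 2 f(a) + 1 = 0,
\]
so $\pi_{g_i}(a)\xi_{g_i} = \xi_{g_i}$ and therefore $g_i(a x a) = g_i(x)$ for every $x \in B$. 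Given $x \in B$ and $\varepsilon > 0$, the hypothesis yields $a \in A^+_1$ with $f(a) = 1$ and a decomposition $a x a = y + z$ with $y \in A$ and $\|z\| < \varepsilon$. Then $g_i(x) = g_i(a x a) = f(y) + g_i(z)$, whence $|g_1(x) - g_2(x)| < 2 \varepsilon$; letting $\varepsilon \to 0$ gives $g_1 = g_2$.

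\textbf{Necessity.} Assume $f$ has a unique state extension $\tilde f$ to $B$. First observe that $\tilde f$ is a pure state on $B$: any convex decomposition $\tilde f = \lambda g_1 + (1-\lambda) g_2$ restricts to a convex decomposition of $f$ on $A$, so purity of $f$ forces $g_i|_A = f$, and uniqueness of the extension then forces $g_1 = g_2 = \tilde f$. Next, pick $d \in A^+_1$ with $f(d) = 1$ and apply the Akemann--Anderson--Pedersen construction (which underlies Proposition~\ref{pro:excising_properties}) to the pure state $\tilde f$ on $B$ with starting element $d$. This produces a decreasing net $(a_n)$ of positive contractions built by continuous functional calculus on $d$, so $a_n \in \Cst(d) \subseteq A$, satisfying $\tilde f(a_n) = f(a_n) = 1$ and
\[
\lim_n \|a_n x a_n - \tilde f(x) a_n^2\| = 0 \qquad \text{for every } x \in B.
\]
Since $\tilde f(x) a_n^2 \in A$, this forces $\|a_n x a_n\|_{B/A} \to 0$, which for any prescribed $\varepsilon > 0$ produces the required element $a$.

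The delicate point in the plan is this last step: Proposition~\ref{pro:excising_properties} is formulated with the excising net living in a hereditary subalgebra of the ambient \(\Cst\)-algebra, whereas here we need the excising net for $\tilde f$, a pure state on $B$, to lie inside the subalgebra $A$, which need not be hereditary in $B$. This is resolved by inspecting the Akemann--Anderson--Pedersen construction itself, whose output is obtained by continuous functional calculus from the single starting element $d$; choosing $d \in A$ therefore automatically places the whole net inside $\Cst(d) \subseteq A$, and the rest of necessity is routine bookkeeping.
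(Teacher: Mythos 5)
The paper does not prove this statement: it is quoted from Anderson's Theorem~3.2, so there is no in\nobreakdash-paper argument to compare yours with. Your sufficiency direction is correct and standard: from \(f(a)=f(a^2)=1\) you get \(\pi_{g_i}(a)\xi_{g_i}=\xi_{g_i}\), hence \(g_i(axa)=g_i(x)\), and the approximate decomposition \(axa=y+z\) pins every extension down to within \(2\varepsilon\).

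The necessity direction has a genuine gap, located exactly at the point you flag as delicate. Your argument rests on the claim that the Akemann--Anderson--Pedersen net excising the pure state \(\tilde f\) of \(B\) is obtained by continuous functional calculus from the starting element \(d\), and therefore lies in \(\Cst(d)\subseteq A\). That is not what the construction does, and no construction of that form can work: take \(C=\Mat_2(\C)\), let \(g\) be the vector state of \(e_1\) and \(d=1\); then \(\Cst(d)=\C\cdot 1\), and \(\norm{a x a - g(x)a^2}=\abs{\lambda}^2\norm{x-g(x)1}\) for \(a=\lambda\cdot 1\) is bounded away from \(0\), so no net in \(\Cst(d)\) excises \(g\). (Already for \(\Cont(X)\) with a point that is not a \(G_\delta\), no single \(d\) peaks exactly at the support of the state, so functional calculus on \(d\) cannot concentrate there.) What the AAP construction actually uses is an approximate unit \((e_\lambda)\) of the hereditary subalgebra \(L_{\tilde f}\cap L_{\tilde f}^*\) of the \emph{ambient} algebra determined by the left kernel \(L_{\tilde f}\) of \(\tilde f\), compressed by \(d\); the resulting elements lie in the hereditary subalgebra \(\overline{dBd}\) of \(B\), not in \(A\). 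This is precisely why Proposition~\ref{pro:excising_properties} assumes \(D\in\Her(A)\). Producing an excising net for \(\tilde f\) that lies inside \(A\) is essentially equivalent to the condition you are trying to establish, so as written the necessity direction is circular. Nor can you substitute the excising net for \(f\) inside \(A\): uniqueness of the extension does identify the support projection \(P_f\in A''\) with \(P_{\tilde f}\in B''\), so that net converges to \(P_{\tilde f}\) \emph{strongly} and \(a_nxa_n-\tilde f(x)a_n^2\to 0\) strongly for \(x\in B\), but strong convergence does not give the norm estimate \(\norm{a_nxa_n}_{B/A}<\varepsilon\), because \(\norm{(1-e_\lambda)h(1-e_\lambda)}\to 0\) is only guaranteed for \(h\) in the hereditary subalgebra of \(A\), not of \(B\). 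This direction needs Anderson's actual argument.
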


By the definition of the quotient norm,
\(\norm{a x a}_{B/A}<\varepsilon\) means that there is \(y\in A\)
with \(\norm{a x a-y}_B<\varepsilon\).

Both Kishimoto's condition and Anderson's criterion for the almost
extension property ask that for a given \(x\in B\) and
\(\varepsilon>0\) there should be \(a\in A^+_1\) with
\(\norm{a x a}_{B/A}<\varepsilon\).  In addition, Kishimoto's
condition asks that~\(a\) may be taken from a specific non-zero
hereditary subalgebra, whereas the almost extension property asks
that \(f(a)=1\) for a given state~\(f\), belonging to a
weak-\(\star\)-dense set of states.

The following theorem is the first main result of this section:

\begin{theorem}
  \label{the:almost_extension_aperiodic}
  \(\Cst\)\nb-inclusions with the almost extension property are
  aperiodic.  If~\(B\) is separable, then \(A\subseteq B\) is an
  aperiodic inclusion if and only if \(A\subseteq B\) has the almost
  extension property.
\end{theorem}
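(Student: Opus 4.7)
The plan is to prove both directions using Anderson's criterion (Theorem \ref{the:unique_extension}), strongly excising nets (Proposition \ref{pro:excising_properties}), and Lemma \ref{lem:unique_extension_GNS}.

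For \emph{AEP implies aperiodicity}, given $x\in B$, $D\in\Her(A)$ and $\varepsilon>0$, I would first combine Proposition \ref{prop:weak_star_density}(5) with Lemma \ref{lem:unique_extension_GNS} (passing to a unit vector in $\overline{\pi_f(D)\mathcal{H}_f}$) to produce $f\in P_1(A\uparrow B)$ with $\|f|_D\|=1$. Proposition \ref{pro:excising_properties} then yields a net $(a_n)\subseteq D^+_1$ strongly excising $f$, while Anderson's criterion furnishes $a^*\in A^+_1$ with $f(a^*)=1$ together with $z\in A$ satisfying $\|a^*xa^*-z\|<\varepsilon/2$. The central computation is $\|(1-a^*)a_n\|\to 0$: since $f(a^*)=f(a^{*2})=1$, strong excising applied to $c=a^*$ and $c=a^{*2}$ gives $\|a_n a^* a_n-a_n^2\|,\|a_n a^{*2} a_n-a_n^2\|\to 0$, and expanding $a_n(1-a^*)^2 a_n=a_n^2-2a_n a^* a_n+a_n a^{*2} a_n$ yields $\|(1-a^*)a_n\|^2=\|a_n(1-a^*)^2 a_n\|\to 0$. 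Writing $x-a^*xa^*=(1-a^*)x+a^*x(1-a^*)$ then forces $\|a_n x a_n-a_n a^* x a^* a_n\|\to 0$; combined with $\|a_n a^* x a^* a_n-a_n z a_n\|<\varepsilon/2$ and $a_n z a_n\in A$, we get $\|a_n x a_n\|_{B/A}<\varepsilon$ for large $n$, verifying Kishimoto's condition.

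For the converse when $B$ is separable, the plan is to show that for every non-zero ideal $J\idealin A$ there is $f\in P_1(A\uparrow B)$ with $f|_J\ne 0$, which suffices by Proposition \ref{prop:weak_star_density}. Fix a non-zero $D_0\in\Her(J)$ and enumerate a dense sequence $(x_n)_{n\ge 1}$ in $B$. Inductively, using aperiodicity, choose $a_n\in D_{n-1}^+_1$ with $\|a_n x_n a_n\|_{B/A}<1/n$; let $c_n\defeq h_n(a_n)$ for a continuous $h_n\colon[0,1]\to[0,1]$ with $h_n(1)=1$ and $\supp(h_n)\subseteq[1-1/n,1]$, and set $D_n\defeq \overline{c_n A c_n}\subseteq D_{n-1}$. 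Since the support projection of $c_n$ in $A^{**}$ is the spectral projection of $a_n$ on $[1-1/n,1]$, any state $f$ on $A$ with $\|f|_{D_n}\|=1$ satisfies $f(a_n)\ge 1-1/n$. The set $K\defeq\bigcap_n\{f\in S(A):\|f|_{D_n}\|=1\}$ is non-empty and weak-$\star$ closed (by compactness and lower semi-continuity of $f\mapsto\|f|_{D_n}\|$) and is a face of $S(A)$ (since $tg(d)+(1-t)h(d)=1$ with $g(d),h(d)\le 1$ and $t\in(0,1)$ forces $g(d)=h(d)=1$), so Krein--Milman produces a pure state $f\in K$.

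The main obstacle is deducing $f\in P_1(A\uparrow B)$ from the approximate information $f(a_n)\ge 1-1/n$. My plan is to first establish a weak form of Anderson's criterion: $f\in P_1(A\uparrow B)$ if and only if for every $y\in B$ and $\varepsilon>0$ there exists $a\in A^+_1$ with $f(a)>1-\varepsilon$ and $\|aya\|_{B/A}<\varepsilon$. For the nontrivial direction, if $\tilde{f}_1,\tilde{f}_2$ are two state extensions of $f$ to $B$ with GNS data $(\pi_i,\xi_i)$, then $\|\pi_i(a)\xi_i-\xi_i\|^2=f(a^2)-2f(a)+1\le 2(1-f(a))$, so $|\tilde{f}_i(aya)-\tilde{f}_i(y)|\le 2\|y\|\sqrt{2(1-f(a))}$; combined with $|\tilde{f}_1(aya)-\tilde{f}_2(aya)|\le 2\varepsilon$ (from $\|aya-z\|<\varepsilon$ for some $z\in A$ and $\tilde{f}_1(z)=f(z)=\tilde{f}_2(z)$), letting $\varepsilon\to 0$ yields $\tilde{f}_1=\tilde{f}_2$. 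With this in hand, for $y\in B$ and $\varepsilon>0$ I choose $n$ with $\|y-x_n\|<\varepsilon/2$ and $1/n<\varepsilon/2$; then $a_n$ satisfies $f(a_n)>1-\varepsilon$ and $\|a_n y a_n\|_{B/A}\le\|a_n x_n a_n\|_{B/A}+\|y-x_n\|<\varepsilon$, giving $f\in P_1(A\uparrow B)$. Since $\|f|_{D_0}\|=1$ and $D_0\subseteq J$, we conclude $f|_J\ne 0$.
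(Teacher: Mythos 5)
Your first half (the almost extension property implies aperiodicity) is correct, but it takes a genuinely different route from the paper. You keep Anderson's criterion for the inclusion \(A\subseteq B\) and then transport the resulting element of \(A^+_1\) into the prescribed hereditary subalgebra \(D\) by means of a strongly excising net, via the estimate \(\norm{(1-a^*)a_n}\to 0\); this computation is valid (note that \(f(a^*)=1\) does force \(f(a^{*2})=1\) through the GNS vector, as you need). The paper instead observes that \(f|_D\) is a pure state of \(D\) that still extends uniquely to \(B\) (states on hereditary subalgebras extend uniquely to \(A\), and \(f\) extends uniquely to \(B\)), and applies Anderson's criterion directly to the inclusion \(D\subseteq B\), which produces \(a\in D^+_1\) in one stroke and needs no excision. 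Your ``approximate'' form of Anderson's criterion, requiring only \(f(a)>1-\varepsilon\), is also correct and is a reasonable observation.

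The converse direction, however, has a genuine gap at the point where you produce the pure state \(f\). The sets \(\setgiven{f}{\norm{f|_{D_n}}=1}\) are in general \emph{not} weak-\(\star\) closed: \(f\mapsto\norm{f|_{D_n}}\) is a supremum of weak-\(\star\) continuous functions, hence only lower semicontinuous, so its superlevel set \(\setgiven{f}{\norm{f|_{D_n}}\ge 1}\) is a \(G_\delta\) rather than a closed set (and for non-unital \(A\) even the state space fails to be weak-\(\star\) compact). For instance, in \(A=\Cont[0,1]\) with \(D=\Cont_0((0,1])\) the states \(\delta_{1/k}\) satisfy \(\norm{\delta_{1/k}|_D}=1\) but converge to \(\delta_0\), which restricts to \(0\) on \(D\). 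Consequently you cannot invoke compactness to conclude that \(K\neq\emptyset\); and for a general decreasing chain of non-zero hereditary subalgebras the intersection really can be empty (take \(D_n=\Cont_0((0,1/n))\) in \(\Cont[0,1]\): a probability measure with \(\mu((0,1/n))=1\) for all \(n\) would be supported on the empty set). Nothing in your construction rules this out, because you impose no exact algebraic relation between consecutive \(c_n\). This is precisely what the paper's recursion is engineered to supply: it produces \(e_n\in D^+_1\) with \(e_ne_{n+1}=e_{n+1}\), so that \(K_n\defeq\setgiven{f\ge 0}{\norm{f}\le 1,\ f(e_n)=1}\) is a weak-\(\star\) compact face cut out by a single continuous evaluation, the relation \(e_ne_{n+1}=e_{n+1}\) forces \(K_{n+1}\subseteq K_n\), and the finite intersection property then yields a pure state \(\varphi\) with \(\varphi(e_n)=1\) exactly. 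Replacing your condition \(\norm{f|_{D_n}}=1\) by the closed conditions \(f(a_m)\ge 1-1/m\) would restore compactness but destroys the face property, so the extreme points of the resulting set need not be pure. To close the gap you essentially have to build a relation of the form \(e_ne_{n+1}=e_{n+1}\) into the recursion, as the paper does; your approximate Anderson criterion would then no longer be needed, since one gets \(\varphi(e_n)=1\) on the nose.
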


\begin{proof}
  Assume first that the inclusion \(A\subseteq B\) has the almost
  extension property.  Let \(x\in B\), \(\varepsilon>0\), and
  \(D\in \Her(A)\).  We are going to check Kishimoto's condition for
  this data.  Let \(A D A\) denote the two-sided ideal generated
  by~\(D\).  Since we assumed the almost extension property, there
  is a pure state in \(P_1(A\uparrow B)\) whose GNS representation
  \(\varrho\colon A \to \Bound(\Hils)\) belongs to~\(\dual{A D A}\).
  Equivalently, \(\varrho|_D\neq0\).  Choose a unit vector
  \(\xi\in \varrho(D)\Hils\) and let~\(f\) be its vector state.  It
  belongs to \(P_1(A\uparrow B)\) by
  Lemma~\ref{lem:unique_extension_GNS}.  The restriction~\(f|_D\) is
  a state whose associated cyclic representation is the restriction
  of~\(\varrho\) to~\(D\) acting on \(\varrho(D)\Hils\) with cyclic
  vector~\(\xi\).  This representation is the image
  of~\(\varrho|_{A D A}\) under the Rieffel correspondence for the
  Morita--Rieffel equivalence between \(A D A\) and~\(D\).  Thus it
  is again irreducible.  The state~\(f|_D\) extends uniquely
  to~\(A\) by
  \cite{Pedersen:Cstar_automorphisms}*{Proposition~3.1.6}.
  Since~\(f\) extends uniquely to~\(B\), the state~\(f|_D\) belongs
  to \(P_1(D\uparrow B)\).  Anderson's criterion in
  Theorem~\ref{the:unique_extension} gives \(a\in D^+_1\) with
  \(\norm{a x a}_{B/D}<\varepsilon\) (and \(f(a)=1\)).  This implies
  Kishimoto's condition for~\(x\).

  Conversely, let \(A\subseteq B\) be an aperiodic inclusion and
  let~\(B\) be separable.  We follow the proof of
  \cite{Olesen-Pedersen:Applications_Connes_3}*{Proposition~6.5} to
  show that the inclusion has the almost extension property.
  Let~\((x_n)_{n\in\N}\) be a dense sequence in the unit ball
  of~\(B\).  Let \(D\in \Her(A)\).  We recursively construct a
  sequence \((e_n)_{n\in \N} \in D^+_1\) such that
  \[
    e_n e_{n+1} = e_{n+1}\quad \text{ and } \quad
    \norm{e_n x_n e_n}_{B/A} \le 2^{-n}
  \]
  for all \(n\in\N\).  To this end, we simultaneously construct
  auxiliary elements \(d_n, y_n \in D^+_1\) with \( e_n y_n = y_n\)
  and \(e_n, y_n\in \Cst(d_n)\).  Pick any \(d_0\in D^+_1\).  The
  functional calculus for~\(d_0\) gives elements
  \(e_0,y_0 \in \Cst(d_0) \subseteq D^+_1\) with \(e_0 y_0 = y_0\)
  as in the proof of
  \cite{Kwasniewski-Meyer:Aperiodicity}*{Lemma~2.9}.  The estimate
  \(\norm{e_0 x_0 e_0}_{B/A} \le 2^{-0}\) is trivial.  Assume
  \(e_n,y_n\) have been constructed as above.  Let
  \(D_{n+1} \defeq \setgiven{z\in D}{z e_n = e_n z = z}\).  This is
  a non-zero hereditary \(\Cst\)\nb-subalgebra because it contains
  \(y_n\neq0\).  Since \(B/A\) is aperiodic, there is
  \(d_{n+1}\in (D_{n+1})^+_1\) with
  \(\norm{d_{n+1} x_{n+1} d_{n+1}}_{B/A} < 2^{-(n+1)}\).  As above,
  we use \cite{Kwasniewski-Meyer:Aperiodicity}*{Lemma~2.9} to choose
  elements
  \(e_{n+1},y_{n+1} \in \Cst(d_{n+1})^+_1 \subseteq (D_{n+1})^+_1\)
  with \(e_{n+1} y_{n+1} = y_{n+1}\).  Given \(\varepsilon>0\), we
  may choose \(e_{n+1} = f(d_{n+1})\cdot d_{n+1}\) with
  \(\norm{f}_\infty < 1 + \varepsilon\).  Thus we may
  choose~\(e_{n+1}\) to also satisfy
  \(\norm{e_{n+1} x_{n+1} e_{n+1}}_{B/A} \le 2^{-(n+1)}\).  This
  completes the recursion step.

  Let
  \(K_n \defeq \setgiven{f\in A^*}{\norm{f}\le 1,\ f\ge 0,\
    f(e_n)=1}\).  Any element of~\(K_n\) is a state of~\(A\), and
  the definition implies that~\(K_n\) is closed in the
  weak-\(\star\)-topology and contained in the unit ball.  Therefore,
  \(K_n\) is compact.  If a convex combination of two states \(f,g\)
  belongs to~\(K_n\), then \(f,g\in K_n\) because \(f(e_n) \le 1\)
  and \(g(e_n) \le 1\) for all states.  Thus~\(K_n\) is a compact
  facet of the set of states.  If \(f\in K_n\), then~\(e_n\) fixes
  the cyclic vector in the GNS representation of~\(f\).  This
  implies \(f(y) = f(e_n y) = f(y e_n)\) for all \(y\in A\).  Then
  \(f(e_{n-1}) = f(e_{n-1} e_n) = f(e_n) = 1\).  This shows that
  \(K_n \subseteq K_{n-1}\).  Then the intersection \(\bigcap K_n\)
  of the decreasing chain of compact facets~\(K_n\) of the state
  space of~\(A\) must be a non-empty facet.  Thus it contains a pure
  state~\(\varphi\).  By construction, \(\varphi(e_n) = 1\) for all
  \(n\in\N\).

  Let \(x\in B\) and \(\varepsilon>0\).  We claim that there is
  \(n\in\N\) with \(\norm{e_n x e_n}_{B/A} < \varepsilon\).
  Rescaling~\(x\), we may assume without loss of generality that
  \(\norm{x}\le1\).  Since~\(\{x_n\}\) is dense in the unit ball
  of~\(B\), there is a subsequence~\((n(k))_{k\in\N}\) with
  \(\lim x_{n(k)} = x\) and \(\lim n(k) = \infty\).  There is
  \(k_0\in\N\) with \(\norm{x-x_{n(k)}}<\varepsilon/2\) for
  \(k\ge k_0\).  There is \(k\ge k_0\) with
  \(2^{-n(k)} < \varepsilon/2\).  Then
  \[
    \norm{e_{n(k)} x e_{n(k)}}
    \le \norm{e_{n(k)} (x-x_{n(k)}) e_{n(k)}}
    + \norm{e_{n(k)} x_{n(k)} e_{n(k)}}
    < \varepsilon/2 + 2^{-n(k)}
    < \varepsilon.
  \]
  By Anderson's criterion in Theorem~\ref{the:unique_extension}, the
  claim that we have just shown implies that~\(\varphi\) extends
  uniquely to a state on~\(B\).  Since \(\norm{\varphi|_{D}}=1\) by
  construction and~\(D\) was an arbitrary hereditary
  \(\Cst\)\nb-subalgebra, this shows that the inclusion
  \(A\subseteq B\) has the almost extension property (see
  Proposition~\ref{prop:weak_star_density}).
\end{proof}

\begin{remark}
 The
  separability assumption in the second part of
  Theorem~\ref{the:almost_extension_aperiodic} is needed, see  Example~\ref{exa:top_free_vs_principal} below.
\end{remark}

Next we study when pure states extend uniquely to crossed products.
We first work in the generality of inverse semigroup actions by
Hilbert bimodules.  Then we specialise to Fell bundles over groups
and étale groupoids.  The following proofs are inspired by the proof
of \cite{Akemann-Weaver:Naimark_problem}*{Theorem~2}.

\begin{proposition}
  \label{pro:trivial_isotropy_to_unique_extension}
  Let~\(\Hilm\) be an action of a unital inverse semigroup~\(S\) on
  a \(\Cst\)\nb-algebra~\(A\) by Hilbert bimodules.  Let
  \(A\subseteq B\) be a \(\Cst\)\nb-inclusion with a surjective
  \Star{}homomorphism \(A\rtimes S \onto B\) which restricts to the
  identity on~\(A\).  Let~\(f\) be a pure state on~\(A\) and let
  \(\varrho\colon A\to\Bound(\Hils)\) be its GNS representation.
  If\/ \([\varrho]\) has trivial isotropy in the dual groupoid of
  the action~\(\Hilm\), then \(f\in P_1(A\uparrow B)\).
\end{proposition}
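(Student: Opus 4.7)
The plan is to verify Anderson's criterion (Theorem~\ref{the:unique_extension}): given $b\in B$ and $\varepsilon>0$, we must exhibit $a\in A^+_1$ with $f(a)=1$ and $\norm{aba}_{B/A}<\varepsilon$. Since the image of $\bigoplus_{t\in S}\Hilm_t$ is dense in $A\rtimes S$ and hence in $B$, and since elements of $\Hilm_1=A$ contribute nothing modulo $A$, we may assume $b=\sum_{t\in F}b_t$ with $F\subseteq S\setminus\{1\}$ finite and $b_t\in X_t\defeq\Hilm_t\cdot I_{1,t}^\bot$. The refinement from $\Hilm_t$ to $X_t$ rests on the containment $\Hilm_t\cdot I_{1,t}\subseteq A$ (for each generator $\s(\Hilm_v)$ of $I_{1,t}$ with $v\le t,1$, we have $\Hilm_t\cdot\s(\Hilm_v)\subseteq\Hilm_v\subseteq A$) combined with the density argument underlying Proposition~\ref{pro:aperiodic_isg_action}, which shows that the image of $X_t$ in $B/A$ already exhausts the image of $\Hilm_t$.

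Next I would translate the triviality of the isotropy of $[\varrho]$ into a hypothesis suited to Lemma~\ref{lem:excising_norm_estimate}. An isotropy arrow at $[\varrho]$ is represented by some $t\in S$ with $[\varrho]$ in the domain of $\widehat{\Hilm_t}$ and $\widehat{\Hilm_t}[\varrho]=[\varrho]$; it is equivalent to the unit arrow precisely when some idempotent $v\le t,1$ satisfies $\varrho|_{\s(\Hilm_v)}\neq 0$, equivalently $\varrho|_{I_{1,t}}\neq 0$. On the other hand, if $[\varrho]$ lies in the domain of $\widehat{X_t}$, then $\varrho|_{I_{1,t}^\bot}$ is non-degenerate, and by irreducibility of $\varrho$ together with the orthogonality of the ideals $I_{1,t}$ and $I_{1,t}^\bot$ we conclude $\varrho|_{I_{1,t}}=0$; moreover the canonical map $X_t\otimes_A\Hils\to\Hilm_t\otimes_A\Hils$, $xp\otimes\xi\mapsto x\otimes p\xi$, is an isomorphism of representations, so $\widehat{X_t}[\varrho]=\widehat{\Hilm_t}[\varrho]$. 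Putting these together, trivial isotropy at $[\varrho]$ forces, for every $t\in S\setminus\{1\}$, that either $X_t\otimes_A\Hils=0$ or $\widehat{X_t}[\varrho]\neq[\varrho]$; in both cases $X_t\otimes_A\Hils$ fails to be unitarily equivalent to $\Hils$, which is exactly the hypothesis required by Lemma~\ref{lem:excising_norm_estimate}.

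The remainder is routine. Choose $D\in\Her(A)$ with $f|_D$ a state (for instance $D=\overline{dAd}$ for some $d\in A^+_1$ with $f(d)=1$), and use Proposition~\ref{pro:excising_properties} to produce a net $(a_n)_{n\in N}$ in $D^+_1\subseteq A^+_1$ strongly excising $f$. Then $f(a_n)=1$ for all $n$, and for each $t\in F$ Lemma~\ref{lem:excising_norm_estimate} applied to $X_t$ yields $\lim_n\norm{a_n b_t a_n}=0$. Since the quotient maps $A\rtimes S\onto B$ and $B\to B/A$ are both contractive, $\norm{a_n b a_n}_{B/A}\le\sum_{t\in F}\norm{a_n b_t a_n}\to 0$, so for $n$ large enough Anderson's inequality holds and $f\in P_1(A\uparrow B)$.

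The main technical hurdle is Step~2, the translation of the trivial-isotropy hypothesis from the dual groupoid into a statement about the Hilbert bimodules $X_t$. The key subtlety is the interplay between the idempotent-$v$ flexibility built into the equivalence relation defining the dual groupoid and the fact that the bimodules $\Hilm_t\cdot I_{1,t}^\bot$, rather than $\Hilm_t$ themselves, are what realize $\Hilm_t$ modulo $A$. The decomposition in Step~1 is then a matter of density already present in the proof of Proposition~\ref{pro:aperiodic_isg_action}, and the final estimate in Step~3 is a direct application of the excising-net tools developed in Section~\ref{sec:top_free_aperiodic}.
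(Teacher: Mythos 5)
Your Steps 2 and 3 are sound, but Step 1 contains a genuine gap that the rest of the argument cannot absorb. You reduce to elements $b_t\in X_t=\Hilm_t\cdot I_{1,t}^\bot$ on the grounds that ``the image of $X_t$ in $B/A$ already exhausts the image of $\Hilm_t$.'' This is false in general: $I_{1,t}\oplus I_{1,t}^\bot$ is an essential ideal of $\s(\Hilm_t)$ but need not equal it, so $\Hilm_t\cdot I_{1,t}+\Hilm_t\cdot I_{1,t}^\bot$ need not be dense in $\Hilm_t$, and the leftover elements can have non-zero image in $B/A$. Concretely, take the non-Hausdorff group bundle $G$ over $[0,1]$ with fibre $\Z/2$ at $1/2$ and trivial fibre elsewhere (the one-point version of Example~\ref{exa:extension_property_not_top_principal}), and let $U$ be the bisection through the non-trivial element $g$ over $1/2$. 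Then $I_{1,U}=\Cont_0([0,1]\setminus\{1/2\})$ is essential, so $X_U=0$; yet $h\in\Cont_0(U)$ with $h(g)\neq0$ satisfies $\norm{h}_{B/A}\ge\abs{h(g)}>0$ (compare the regular representation at $1/2$). Your argument would declare such $h$ negligible modulo $A$ and never test Anderson's criterion against it, even though the proposition does apply to every $[\varrho]=x_0$ with $x_0\neq 1/2$. What makes this reduction work for \emph{aperiodicity} (Proposition~\ref{pro:aperiodic_isg_action}) is that Kishimoto's condition lets you \emph{choose} $a$ deep inside any given hereditary subalgebra, so you can push $a h a$ into $\Hilm_t\cdot I_{1,t}\subseteq A$; Anderson's criterion instead \emph{pins} $a$ down by $f(a)=1$, so this freedom is lost and the two reductions are not interchangeable.

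The missing ingredient is in the construction of the excising net. You must keep $b_t\in\Hilm_t$ and deal with the case $\Hilm_t\otimes_A\varrho\cong\varrho$ with $[\varrho]\in\dual{I_{1,t}}$ (the case of a \emph{trivial} isotropy arrow represented by $t\neq 1$), where Lemma~\ref{lem:excising_norm_estimate} gives nothing. The paper handles this by splicing together excising nets taken inside every ideal $J$ with $\varrho|_J\neq0$, producing a single net $(a_n)$ that strongly excises $f$ \emph{and} eventually lies in each such $J$ --- in particular eventually in $I_{1,t}$, whence $a_n b_t a_n\in I_{1,t}\cdot\Hilm_t\cdot I_{1,t}\subseteq\Hilm_t\cdot I_{1,t}\subseteq A$ and $\norm{a_n b_t a_n}_{B/A}=0$ eventually. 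Your net, drawn from one fixed $D\in\Her(A)$, does not have this property. Note that for group actions one has $I_{1,t}=0$ for all $t\neq1$, so $X_t=\Hilm_t$ and your argument is complete in that case; the gap only opens up for genuine inverse semigroup actions, especially those coming from non-Hausdorff groupoids.
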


\begin{proof}
  Let~\(N_{[\varrho]}\) be the directed set of open neighbourhoods
  of~\([\varrho]\).  Each element of~\(N_{[\varrho]}\) has the
  form~\(\dual{J}\) for an ideal~\(J\) in~\(A\) with
  \(\varrho|_J\neq0\); equivalently, \(f|_J\) is a pure state
  on~\(J\).  Proposition~\ref{pro:excising_properties} gives a net
  \((a_n)_{n\in N_J}\) in~\(J\) that strongly excises~\(f\).  We
  combine all these nets, indexing them by the disjoint union
  \(\bigsqcup_{J\in N_{[\varrho]}} N_J\) with a suitable partial
  order.  The result is a net \((a_n)_{n\in N}\) in~\(A\) that
  strongly excises~\(f\) and such that for each
  \(J\in N_{[\varrho]}\) there is \(n_0\in N\) with \(a_n \in J\)
  for all \(n \ge n_0\).  We claim that
  \(\lim {}\norm{a_n x a_n}_{B/A} = 0\) for all \(x\in B\).

  The subset of elements \(x\in B\) with
  \(\lim {}\norm{a_n x a_n}_{B/A} = 0\) is a norm closed vector
  subspace.  Since the images of~\(\Hilm_t\) in~\(B\) for \(t\in S\)
  are linearly dense, it suffices to check the claim for \(t\in S\)
  and \(x\in \Hilm_t\).  If~\(t\) is such that
  \(\Hilm_t \otimes_A \varrho\) is not unitarily equivalent
  to~\(\varrho\), then the claim follows from
  Lemma~\ref{lem:excising_norm_estimate}.  So assume
  \(\Hilm_t \otimes_A \varrho \cong \varrho\).  Since we
  assumed~\([\varrho]\) to have trivial isotropy in the dual
  groupoid, it follows that \([\varrho] \in \dual{I_{1,t}}\).  (For
  a group action, this only happens for \(t=1\).)  Then there is
  \(n_0\in N\) with \(a_n \in I_{1,t}\) for \(n\ge n_0\).  Thus
  \(a_n x a_n \in \Hilm_t\cdot I_{1,t}\).  This is identified
  in~\(A\rtimes S\) and hence in~\(B\) with \(I_{1,t} \subseteq A\).
  So \(\norm{a_n x a_n}_{B/A} = 0\) for all \(n \ge n_0\).  This
  proves the claim.  Then~\(f\) extends uniquely to a state
  on~\(A\rtimes S\) by Anderson's criterion in
  Theorem~\ref{the:unique_extension}.
\end{proof}

\begin{proposition}
  \label{pro:unique_extension_to_trivial_isotropy}
  Let~\(\Hilm\) be an action of a unital inverse semigroup~\(S\) on
  a \(\Cst\)\nb-algebra~\(A\) by Hilbert bimodules.  Let
  \(A\subseteq B\) be a \(\Cst\)\nb-inclusion with a surjective
  \Star{}homomorphism \(B \onto A\rtimes_\red S\) that restricts to
  the canonical inclusion \(A\hookrightarrow A\rtimes_\red S\).
  Let~\(f\) be a pure state on~\(A\) and let
  \(\varrho\colon A\to\Bound(\Hils)\) be its GNS representation.  If\/
  \([\varrho]\in\dual{A}\) has non-trivial isotropy in the dual
  groupoid of the action~\(\Hilm\), then
  \(f\notin P_1(A\uparrow B)\).
\end{proposition}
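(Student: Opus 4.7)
The plan is to exhibit two distinct state extensions of $f$ to $A\rtimes_\red S$; pulling back along the quotient map $q\colon B \onto A\rtimes_\red S$ (which restricts to the identity on~$A$ and hence is injective on states) then yields two distinct extensions of~$f$ to~$B$, so $f\notin P_1(A\uparrow B)$. The hypothesis of non-trivial isotropy at $[\varrho]$ furnishes $t\in S$ with $\dual{\Hilm_t}([\varrho])=[\varrho]$ and $[\varrho]\notin\dual{I_{1,t}}$. The first condition gives a unitary $U\colon\Hilm_t\otimes_\varrho\Hils\to\Hils$ intertwining the left $A$-actions; set $T(x)\xi\defeq U(x\otimes\xi)$ for $x\in\Hilm_t$, $\xi\in\Hils$. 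The second condition yields $\varrho|_{I_{1,t}}=0$, and in particular $f|_{I_{1,t}}=0$.

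For the first extension, the covariant pair $(\varrho,T)$ defines a $*$-representation of the $*$-subalgebra of $A\rtimes_\red S$ generated by $A$ and the image of~$\Hilm_t$. Composing with the vector state $a\mapsto\braket{\xi_f}{a\xi_f}$ on $\Bound(\Hils)$ gives a completely positive unital functional on the operator system $X\subseteq A\rtimes_\red S$ spanned by $A$, $\Hilm_t$, and~$\Hilm_t^*$, with value $\braket{\xi_f}{U(x\otimes\xi_f)}$ on $x\in\Hilm_t$. Hahn--Banach then produces a state $\phi_1$ on $A\rtimes_\red S$ extending this functional. For the second extension, take $\phi_2\defeq\tilde f\circ E$, where $E\colon A\rtimes_\red S\to I(A)$ is a pseudo-expectation and $\tilde f$ is any state extension of~$f$ to~$I(A)$; the pseudo-expectation into $\Locmult(A)$ constructed explicitly in \cite{Kwasniewski-Meyer:Essential} satisfies $E(\Hilm_t)\subseteq I_{1,t}$, so $\phi_2(x)=f(E(x))=0$ for all $x\in\Hilm_t$.

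To conclude $\phi_1\neq\phi_2$, it suffices to find $x\in\Hilm_t$ with $\phi_1(x)\neq0$. Since $\varrho$ is irreducible, $\varrho(A)\xi_f$ is dense in~$\Hils$, so any $y\otimes\eta\in\Hilm_t\otimes_\varrho\Hils$ is approximated by $(y\cdot a)\otimes\xi_f$ for suitable $a\in A$; hence $\{x\otimes\xi_f:x\in\Hilm_t\}$ is dense in $\Hilm_t\otimes_\varrho\Hils$, and unitarity of~$U$ yields $x\in\Hilm_t$ with $U(x\otimes\xi_f)$ arbitrarily close to~$\xi_f$, so that $\phi_1(x)\approx 1 \neq 0$. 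The main technical point is the property $E(\Hilm_t)\subseteq I_{1,t}$ of the pseudo-expectation, reflecting that $E$ projects each Hilbert bimodule summand of the reduced crossed product onto its intersection with the unit fiber; this is built into the explicit construction for inverse semigroup crossed products in \cite{Kwasniewski-Meyer:Essential}.
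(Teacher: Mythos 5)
Your overall strategy differs from the paper's: you try to exhibit two explicit state extensions of~\(f\), whereas the paper shows that Anderson's criterion (Theorem~\ref{the:unique_extension}) fails by proving the single estimate \(\norm{a\tilde{x}a-y}\ge1\) in the representation of~\(B\) induced from~\(\varrho\) on \(\ell^2(S,A'')\otimes_{A''}\Hils\). The idea is reasonable, but both halves of your construction have genuine gaps. For~\(\phi_1\), the assertion that \((\varrho,T)\) ``defines a \(^*\)\nb-representation of the \(^*\)\nb-subalgebra of \(A\rtimes_\red S\) generated by \(A\) and \(\Hilm_t\)'' is exactly what needs proof, and it is where the full-versus-reduced difficulty lives. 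A covariant pair only gives a representation of the full crossed product by the inverse subsemigroup generated by~\(t\); to obtain a positive functional on the operator system inside \(A\rtimes_\red S\) (which is what the Hahn--Banach/Arveson extension step requires) you must show that this representation is dominated by the norm of \(A\rtimes_\red S\). That requires (i) compatibility of the iterated maps on \(\Hilm_{t^k}\cong\Hilm_t^{\otimes_A k}\) with the identifications \(\Hilm_v\subseteq\Hilm_{t^k}\) for \(v\le t^k\) that hold inside the crossed product --- the hypothesis only gives \(\varrho|_{I_{1,t}}=0\), while for \(k\ge2\) the ideal \(I_{1,t^k}\) need not be annihilated by~\(\varrho\), so one must normalise the phase of~\(U\) to make the iterated intertwiner agree with~\(\varrho\) there (compare the \(p\)th-root adjustment in the proof of Proposition~\ref{pro:intermediate_detection_to_purely_outer}) --- and (ii) a weak-containment/isometric-embedding statement for the subalgebra generated by \(A\) and~\(\Hilm_t\). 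None of this is addressed.

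For~\(\phi_2\), taking ``any state extension \(\tilde f\) of~\(f\) to \(I(A)\)'' does not give \(\phi_2|_{\Hilm_t}=0\). You correctly identify \(E(\Hilm_t)\subseteq\Mult(I_{1,t})\), but \(f|_{I_{1,t}}=0\) only forces \(\tilde f\) to vanish on \(I_{1,t}\) itself, not on its multiplier algebra: a non-normal extension can give positive mass to the support projection of~\(I_{1,t}\). For instance, with \(A=\Cont[0,1]\) and \(I=\Cont_0((0,1])\), the unit of \(\Mult(I)\) is the identity of \(I(A)\), so \emph{every} state extension of \(\mathrm{ev}_0\) assigns it the value~\(1\), even though \(\mathrm{ev}_0|_I=0\). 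To make this step work you must use the normal extension of~\(f\) to~\(A''\) composed with the weak conditional expectation \(A\rtimes_\red S\to A''\), that is, the vector state of \(1\otimes\xi\) in the induced representation --- which is precisely the representation on which the paper's proof is built. Your final non-vanishing argument (density of \(\setgiven{x\otimes\xi}{x\in\Hilm_t}\) and unitarity of~\(U\)) is correct and plays the role of the paper's appeal to Kadison's Transitivity Theorem.
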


\begin{proof}
  By assumption, there is \(t\in S\) with
  \(\dual{\Hilm_t}[\varrho] = [\varrho]\) and
  \([\varrho] \notin \dual{I_{1,t}}\) for the ideal~\(I_{1,t}\)
  defined in~\eqref{eq:Itu}; if~\(S\) is a group, this simply means
  \(t\neq 1\).  Then \(\Hilm_t \otimes_\varrho \Hils\neq0\), so that
  there is \(x\in \Hilm_t\) with \(x\otimes \xi\neq0\).  The
  left multiplication representation of~\(A\) on
  \(\Hilm_t \otimes_\varrho \Hils\) is unitarily equivalent
  to~\(\varrho\).  Since~\(\varrho\) is irreducible, Kadison's
  Transitivity Theorem allows us to choose an element \(a\in A\) so
  that the unitary intertwiner between these representations maps
  \(a\cdot (x\otimes \xi)\) to the canonical cyclic vector~\(\xi\).
  We could have picked \(a\cdot x\) instead of~\(x\) from the
  beginning, and we assume this to simplify notation.  Then the
  unitary intertwiner maps \(x\otimes \xi\) to~\(\xi\).  So both
  vectors define the same vector state on~\(A\).  That is,
  \begin{equation}
    \label{eq:tensor_with_f_equivalent}
    f(a)
    = \braket[\big]{\xi}{\varrho(a)\xi}
    = \braket[\big]{x\otimes \xi}{a\cdot x\otimes\xi}
    = \braket[\big]{\xi}{\varrho(\braket{x}{a\cdot x})\xi}
    = f\bigl(\braket{x}{a\cdot x}\bigr)
  \end{equation}
  for all \(a\in A\).  Let \(\tilde{x} \in B\) be a pre-image
  for~\(x\) under the surjective map \(B \to A\rtimes_\red S\).  We
  claim that \(\norm{a \tilde{x} a}_{B/A} \ge 1\) holds for all
  \(a\in A^+_1\) with \(f(a) = 1\).

  The GNS representation~\(\varrho\) of~\(f\) induces a
  representation of~\(A\rtimes_\red S\) and thus a
  representation~\(\omega\) of~\(B\).  We are going to verify
  \(\norm{\omega(a x a -y)} \ge 1\) for all \(y\in A\) and
  \(a\in A^+_1\) with \(f(a)=1\); this will finish the proof of the
  theorem.  To build the representation~\(\omega\), we first extend
  the \(S\)\nb-action on~\(A\) to the bidual~\(A''\).  (This step is
  not needed in for group actions because then~\(E\) takes values in~\(A\).)
  We use the canonical conditional expectation
  \(E\colon A''\rtimes_\red S \to A''\) to form a Hilbert
  \(A''\)\nb-module \(\ell^2(S,A'')\).  Then~\(\omega\) is the left
  multiplication action on the tensor product
  \(\Hils[K] \defeq \ell^2(S,A'') \otimes_{A''} (\Hils,\varrho)\).
  Since~\(A''\) is unital, the Hilbert space~\(\Hils[K]\) contains a
  copy of~\(\Hils\) of the form \(1\otimes \Hils\).  Let
  \(a,y\in A\).  Then \(a x a - y \in A\rtimes_\alg S\) maps the
  unit vector \(1\otimes \xi\) to the vector
  \(a x a \otimes \xi - y \otimes \xi\) in~\(\Hils[K]\).  We claim
  that the summands \(a x a \otimes \xi\) and \(y \otimes \xi\) are
  orthogonal.  Indeed, their inner product is defined to be
  \[
    \braket[\big]{\xi}{\varrho''\circ E((a x a)^*\cdot y)\xi}
    = f\circ E((a x a)^*\cdot y).
  \]
  This vanishes because the expectation~\(E\) multiplies with the
  support projection~\([I_{1,t}]\) of the ideal~\(I_{1,t}\), which
  is killed by~\(f\) because \([\varrho] \notin \dual{I_{1,t}}\).
  So
  \begin{align*}
    \norm{\omega(a x a - y)}
    &\ge \norm{(a x a - y)(1\otimes \xi)}_{\ell^2(S,A'')
      \otimes_{\varrho''} \Hils}
    \\
    &\ge \norm{a x a \otimes \xi}_{\ell^2(S,A'') \otimes_{\varrho''} \Hils}
      = f\bigl(\braket{a x a}{a x a}\bigr)^{1/2}.
  \end{align*}
  Recall that we assume \(f(a)=1\).  Then
  \(\norm{\varrho(a) \xi} = 1\) and
  \(\braket{\varrho(a) \xi}{\xi}=1\), and this implies
  \(\varrho(a) \xi = \xi\).  So \(f(y a) = f(a) = f(a y)\) for all
  \(y\in A\).  Using this and~\eqref{eq:tensor_with_f_equivalent},
  we compute
  \[
    f\bigl(\braket{a x a}{a x a}\bigr)
    = f\bigl(a \braket{a x}{a x} a\bigr)
    = f\bigl(\braket{a x}{a x}\bigr)
    = f\bigl(\braket{x}{a^2 x}\bigr)
    = f(a^2) = 1.
  \]
  This finishes the proof of the claim.  Then~\(f\) has more than
  one extension to a state on~\(B\) by Anderson's criterion in
  Theorem~\ref{the:unique_extension}.
\end{proof}

\begin{definition}
  A groupoid is called \emph{principal} if all points in \(G^0\)
  have trivial isotropy.  A topological groupoid~\(G\) is called
  \emph{topologically principal} if the set of \(x\in G^0\) with
  trivial isotropy group is dense in~\(G^0\).
\end{definition}

\begin{remark}
  \label{rem:about_Baire_and_Hausdorff}
  If an étale groupoid~\(G\) is topologically principal, then it is
  topologically free.  The converse holds when \(G\) has a countable
  cover by bisections and the unit space \(X\) contains a dense
  Hausdorff Baire space (see
  \cite{Kwasniewski-Meyer:Essential}*{Corollary~2.26}).  The case
  when~\(X\) is not Hausdorff is more subtle (see the comment before
  \cite{Kwasniewski-Meyer:Essential}*{Proposition~2.24}).  In
  particular, the proof of
  \cite{Renault:Cartan.Subalgebras}*{Proposition~3.6.(ii)} has a
  gap.
\end{remark}

\begin{theorem}
  \label{the:almost_extension_top_principal}
  Let~\(S\) be a unital inverse semigroup that acts on a \(\Cst\)\nb-algebra~\(A\) by Hilbert
  bimodules.  Let~\(B\) be a \(\Cst\)\nb-algebra with surjective
  \Star{}homomorphisms \(A\rtimes S \onto B \onto A\rtimes_\red S\)
  that compose to the quotient map
  \(A\rtimes S\onto A\rtimes_\red S\).
  \begin{enumerate}
  \item \label{item:almost_extension_top_principal1}%
    A pure state \(f\in P(A)\) belongs to \(P_1(A \uparrow B)\) if
    and only if the GNS representation of~\(f\) has trivial isotropy
    in the dual groupoid \(\dual{A}\rtimes S\).
  \item \label{item:almost_extension_top_principal2}%
    The inclusion \(A \hookrightarrow B\) has the almost extension
    property if and only if the dual groupoid \(\dual{A}\rtimes S\)
    is topologically principal.
  \item \label{item:almost_extension_top_principal3}%
    The inclusion \(A \hookrightarrow B\) has the extension property
    if and only if the dual groupoid is principal.
  \end{enumerate}
\end{theorem}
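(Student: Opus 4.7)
The plan is to derive all three parts directly from Propositions \ref{pro:trivial_isotropy_to_unique_extension} and \ref{pro:unique_extension_to_trivial_isotropy}, which together already settle the statement at the level of individual pure states, together with the translation between properties of $P_1(A\uparrow B)$ and properties of $\dual{A}$ provided by Proposition \ref{prop:weak_star_density} and Lemma \ref{lem:unique_extension_GNS}.

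For part~\ref{item:almost_extension_top_principal1}, the hypothesis gives both a surjection $A\rtimes S \onto B$ restricting to the identity on $A$ (needed for Proposition \ref{pro:trivial_isotropy_to_unique_extension}) and a surjection $B \onto A\rtimes_\red S$ restricting to the canonical inclusion (needed for Proposition \ref{pro:unique_extension_to_trivial_isotropy}). Thus, if $[\varrho_f]$ has trivial isotropy in $\dual{A}\rtimes S$ then $f\in P_1(A\uparrow B)$ by the first proposition, while if the isotropy is non-trivial then $f\notin P_1(A\uparrow B)$ by the second proposition. The two implications together yield the equivalence.

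For part~\ref{item:almost_extension_top_principal2}, I would use the equivalence of conditions~\ref{enu:weak_star_density1} and~\ref{enu:weak_star_density5} in Proposition \ref{prop:weak_star_density}: the almost extension property is equivalent to the image of $P_1(A\uparrow B)$ in $\dual{A}$ being dense. By part~\ref{item:almost_extension_top_principal1}, this image is exactly the set of points in the unit space $\dual{A}$ of the dual groupoid that have trivial isotropy. Density of this set in $\dual{A}$ is the definition of topological principality, so the equivalence follows. Part~\ref{item:almost_extension_top_principal3} is analogous but simpler: by Lemma \ref{lem:unique_extension_GNS} unique extendibility is a property of the class $[\varrho_f]\in\dual{A}$, so $P_1(A\uparrow B)=P(A)$ if and only if every class $[\varrho]\in\dual{A}$ has trivial isotropy, which is the definition of the dual groupoid being principal.

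There is essentially no genuine obstacle here: the work has been done in the two preceding propositions, and the remaining step is the purely formal translation from pure states that extend uniquely to points of $\dual{A}$ with trivial isotropy. The only point that requires a moment of care is ensuring that the hypotheses in parts~\ref{item:almost_extension_top_principal2} and~\ref{item:almost_extension_top_principal3} correctly invoke the full chain $A\rtimes S \onto B \onto A\rtimes_\red S$ so that both halves of part~\ref{item:almost_extension_top_principal1} are available simultaneously.
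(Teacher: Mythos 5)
Your proposal is correct and follows exactly the paper's own argument: part (1) is obtained by combining Propositions \ref{pro:trivial_isotropy_to_unique_extension} and \ref{pro:unique_extension_to_trivial_isotropy} (both halves of the hypothesis on $B$ being needed for the two directions), and parts (2) and (3) then follow from part (1) via the equivalence of conditions \ref{enu:weak_star_density1} and \ref{enu:weak_star_density5} in Proposition \ref{prop:weak_star_density}. No gaps.
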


\begin{proof}
  Propositions \ref{pro:trivial_isotropy_to_unique_extension}
  and~\ref{pro:unique_extension_to_trivial_isotropy} combined
  give~\ref{item:almost_extension_top_principal1}.
  Statement~\ref{item:almost_extension_top_principal1}
  implies~\ref{item:almost_extension_top_principal3}.  It also
  implies~\ref{item:almost_extension_top_principal2} because
  \(P_1(A\uparrow B)\) is dense in~\(P(A)\) if and only if its image
  is dense in~\(\dual{A}\) by
  Proposition~\ref{prop:weak_star_density}.
\end{proof}

Theorem~\ref{the:almost_extension_top_principal} generalises a
result of Zarikian for group actions by automorphisms on unital
\(\Cst\)\nb-algebras (see \cite{Zarikian:Pure_extension}*{Theorem
  2.4}).

\begin{corollary}
  \label{cor:extension_prop_Fell_groupoid}
  Let~\(\A\) be a Fell bundle over an étale groupoid~\(G\) with a
  locally compact Hausdorff unit space~\(X\).  Put
  \(A\defeq \Cont_0(X,\A|_X)\) and let~\(B\) be a
  \(\Cst\)\nb-algebra with surjective maps
  \(\Cst(G, \A) \onto B \onto \Cst_\red(G, \A)\) that compose to the
  quotient map \(\Cst(G, \A) \onto \Cst_\red(G, \A)\).
  \begin{enumerate}
  \item A pure state \(f\in P(A)\) belongs to \(P_1(A \uparrow B)\)
    if and only if the GNS representation of~\(f\) has trivial
    isotropy in the dual groupoid \(\dual{A}\rtimes G\).
  \item The inclusion \(A \hookrightarrow B\) has the extension
    property if and only if the dual groupoid~\(\dual{A}\rtimes G\)
    is principal.
  \item The inclusion \(A \hookrightarrow B\) has the almost
    extension property if and only if the dual
    groupoid~\(\dual{A}\rtimes G\) is topologically principal.
  \end{enumerate}
\end{corollary}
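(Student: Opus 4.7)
The plan is to deduce this corollary directly from Theorem~\ref{the:almost_extension_top_principal} by translating the groupoid action into an inverse semigroup action, exactly as was done for aperiodicity in Definition~\ref{def:groupoid_action_aperiodic} and Corollary~\ref{cor:top_free_vs_aperiodic}. Concretely, to the Fell bundle~\(\A\) over~\(G\) I attach the saturated Fell bundle over the inverse semigroup~\(S\) of Hilbert sub-bimodules of sections on bisections, as described before the isomorphisms~\eqref{eq:groupoid_isomorphisms}. This gives an action of~\(S\) on \(A = \Cont_0(X,\A|_X)\) by Hilbert bimodules.

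Next I would use \eqref{eq:groupoid_isomorphisms} to rewrite the hypothesis on~\(B\): the surjective \Star{}homomorphisms \(\Cst(G,\A) \onto B \onto \Cst_\red(G,\A)\) composing to the canonical quotient become surjective \Star{}homomorphisms \(A\rtimes S \onto B \onto A\rtimes_\red S\) composing to the canonical quotient. So~\(B\) is an admissible intermediate algebra in the sense of Theorem~\ref{the:almost_extension_top_principal}. Similarly, the natural isomorphism of dual groupoids \(\dual{A}\rtimes G \cong \dual{A}\rtimes S\) recalled in Example~\ref{exa:transformation_groupoid_Fell} preserves isotropy groups pointwise: the isotropy of \([\varrho]\in\dual{A}\) in \(\dual{A}\rtimes G\) is trivial if and only if it is trivial in \(\dual{A}\rtimes S\). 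In particular, \(\dual{A}\rtimes G\) is (topologically) principal if and only if \(\dual{A}\rtimes S\) is (topologically) principal.

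With these identifications in place, the three statements of the corollary become direct restatements of parts~\ref{item:almost_extension_top_principal1}, \ref{item:almost_extension_top_principal3} and~\ref{item:almost_extension_top_principal2} of Theorem~\ref{the:almost_extension_top_principal}, respectively, and I would simply invoke them. I do not expect any genuine obstacle: the verification that the isomorphism of dual groupoids preserves isotropy is essentially built into the construction, and the compatibility of the surjections with the quotient maps is part of~\eqref{eq:groupoid_isomorphisms}. The only point that deserves a line of care is observing that the unit of~\(S\) corresponds to the unit bisection~\(X\) of~\(G\), so that ``trivial isotropy in the dual groupoid'' has the same meaning on both sides.
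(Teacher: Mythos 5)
Your proposal is correct and follows essentially the same route as the paper: reduce to the associated inverse semigroup action, identify the crossed products via~\eqref{eq:groupoid_isomorphisms} and the dual groupoids via Example~\ref{exa:transformation_groupoid_Fell}, and then invoke Theorem~\ref{the:almost_extension_top_principal}. The paper's own proof is just a terser version of exactly this argument.
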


\begin{proof}
  A Fell bundle over an étale, locally compact groupoid gives rise
  to an action of an inverse semigroup such that the full, reduced,
  and essential crossed products and the dual groupoids are the same
  (see \cite{Kwasniewski-Meyer:Essential}*{Section~7}).  The inverse
  semigroup is generated by bisections of~\(G\).  So we may choose
  it to be countable if~\(G\) is covered by countably many
  bisections.  Now all claims follow from
  Theorem~\ref{the:almost_extension_top_principal}.
\end{proof}

\begin{remark}
  Assume the dual groupoid \(\dual{A}\rtimes G\) of a Fell bundle
  over an étale, locally compact groupoid~\(G\) to be principal.
  Then \cite{Kwasniewski-Meyer:Essential}*{Lemma~7.15 and
    Proposition~7.18} imply that
  \(\Cst_\red(G,\A) = \Cst_\ess(G,\A)\).
\end{remark}

%\begin{remark}
  In Proposition~\ref{pro:trivial_isotropy_to_unique_extension},
  \(B\) may be \(A\rtimes S\), \(A\rtimes_\red S\), or
  \(A\rtimes_\ess S\).  In fact, it may be any \emph{\(S\)\nb-graded
  \(\Cst\)\nb-algebra}~\(B\) (\cite{Kwasniewski-Meyer:Stone_duality}*{Definition~6.15}).
	The \(S\)\nb-grading is a family of
  closed subspaces \((B_t)_{t\in S}\) with \(B_t^* = B_{t^*}\) and
  \(B_t \cdot B_u = B_{t u}\) for all \(t,u\in S\) and
  \(\sum B_t = B\).
	Then the Banach spaces~\(B_t\) with the
  multiplication and involution from~\(B\) define an action of~\(S\)
  on~\(A\) by Hilbert bimodules.  The inclusion maps
  \(B_t \hookrightarrow B\) induce a canonical surjective
  \Star{}homomorphism from the crossed product for this action
  to~\(B\).
%\end{remark}

In contrast,
Proposition~\ref{pro:unique_extension_to_trivial_isotropy} may fail
for essential crossed products.  That is, a pure state~\(f\)
on~\(A\) may extend uniquely to the essential crossed product
\(A\rtimes_\ess S\) without extending uniquely to
\(A\rtimes_\red S\).  We do not know how to characterise which pure
states extend uniquely to \(A\rtimes_\ess S\).  Even the equivalence in
Theorem~\ref{the:almost_extension_top_principal}\ref{item:almost_extension_top_principal2} may fail, as the
following counterexample shows:

\begin{example}
  \label{exa:extension_property_not_top_principal}
  There is a non-Hausdorff groupoid~\(G\) such that
  \(\Cst_\ess(G) = \Cont_0(G^0)\) although~\(G\) is not topologically
  principal.  The construction starts with the uncountable group
  \(\Gamma \defeq \bigoplus_{t\in [0,1]} \Z/2\).  Equip the trivial
  group bundle \([0,1]\times \Gamma\) over~\([0,1]\) with the
  equivalence relation defined by
  \[
    \biggl(t,\sum_{s\in [0,1]} a_s\biggr) \sim
    \biggl(x,\sum_{s\in [0,1]} b_s\biggr) \quad\iff\quad
    x=t \text{ and } a_t=b_t.
  \]
  Let~\(G\) be the quotient of \([0,1]\times\Gamma\) by~\(\sim\),
  equipped with the quotient topology.  	This is a group bundle
  over~\([0,1]\) with fibres~\(\Z/2\) at all points.  Let \(q\colon [0,1]\times\Gamma \onto G\) be the quotient map.
	The sets
  \(q([0,1]\times \{\gamma\})\) for \(\gamma\in\Gamma\) are open
  bisections of~\(G\) that cover \(G\).  So~\(G\) is an étale, non-Hausdorff
  groupoid.  It is not topologically principal because each isotropy
  group is isomorphic to~\(\Z/2\).  The unit bisection~\(G^0\) is
  dense in~\(G\) because it  intersects \(q([0,1]\times \{\gamma\})\)
  for each \(\gamma\in\Gamma\).  It follows that the essentially defined
  conditional expectation \(\Cst(G) \to \Locmult(\Cont[0,1])\) is a
  \Star{}homomorphism to \(\Cont[0,1]\).  Thus the inclusion map
  \(\Cont[0,1] \hookrightarrow \Cst_\ess(G)\) becomes an
  isomorphism.  As a result, any (pure) state on \(\Cont[0,1]\)
  extends uniquely to \(\Cst_\ess(G)\), although~\(G\) is not
  topologically principal.
\end{example}

It is crucial that the groupoid in
Example~\ref{exa:extension_property_not_top_principal} is not
covered by a countable family of bisections.  Indeed,
Theorem~\ref{the:cycle_of_equivalences} below implies the
equivalence in
Theorem~\ref{the:almost_extension_top_principal}.\ref{item:almost_extension_top_principal2}
for all exotic crossed products provided~\(S\) is countable
and~\(A\) contains an essential ideal that is separable or of
Type~I.  In fact, under these assumptions the almost extension
property is equivalent to a number of conditions including
topological freeness and aperiodicity.

\begin{example}
  \label{exa:top_free_vs_principal}
  Let~\(G\) be the group of affine isometries of~\(\R\).  It is
  generated by translations and one reflection and therefore
  isomorphic to \(\R\rtimes \Z/2\).  Give~\(G\) the discrete
  topology.  The transformation groupoid for the action of~\(G\)
  on~\(\R\) is topologically free because each element of~\(G\setminus\{1\}\)
  fixes at most one point in~\(\R\).  It is not topologically
  principal because each point in~\(\R\) is fixed by some element
  of~\(G\setminus\{1\}\).  Our results show that the induced action of~\(G\) on
  \(\Cont_0(\R)\) is topologically free and aperiodic.  There is,
  however, no pure state on \(\Cont_0(\R)\) that extends uniquely to
  \(\Cont_0(\R)\rtimes G\).
\end{example}

\section{Detection of ideals in intermediate algebras}
\label{sec:detection_in_intermediate}

The work of Olesen--Pedersen
\cites{Olesen-Pedersen:Applications_Connes,
  Olesen-Pedersen:Applications_Connes_2,
  Olesen-Pedersen:Applications_Connes_3} shows that for actions of
the group~\(\Z\) or~\(\Z/p\) for a square-free number~\(p\) on a
separable \(\Cst\)\nb-algebra~\(A\), aperiodicity and topological
freeness are not only sufficient, but also necessary for~\(A\) to
detect ideals in the crossed product (see
also~\cite{Kwasniewski-Meyer:Aperiodicity}).  In this section, we
use this to prove that a stronger condition is always sufficient for an action
to be topologically
free.  Namely, we require~\(A\) to detect ideals in all intermediate
\(\Cst\)\nb-algebras between~\(A\) and the essential crossed
product.  It is natural to strengthen ideal detection in this way
because the uniqueness of pseudo-expectations and aperiodicity are
hereditary for such intermediate inclusions.

\begin{proposition}
  \label{pro:intermediate_detection_to_top-free_isg}
  Let~\(\Hilm\) be an action of a unital inverse semigroup ~\(S\) on
  a \(\Cst\)\nb-algebra~\(A\).  Assume that \(A\) contains an
  essential ideal that is separable or of Type~I.  If~\(A\) detects
  ideals in~\(C\) for any
  \(A\subseteq C \subseteq A\rtimes_\ess S\), then the
  action~\(\Hilm\) is topologically free.  In fact, it suffices to
  assume that~\(A\) detects ideals in \(A\rtimes_\ess T\) for any
  inverse subsemigroup \(T\subseteq S\) that is generated by the
  idempotents in~\(S\) together with a single \(t\in S\).
\end{proposition}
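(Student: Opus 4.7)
The plan is to prove the contrapositive: assuming the action $\Hilm = (\Hilm_t)_{t\in S}$ is not topologically free, I will find a single $t\in S$ and show that $A$ fails to detect ideals in $A\rtimes_\ess T$, where $T$ is the inverse subsemigroup of $S$ generated by $t$ together with the idempotents of $S$. First, Lemma~\ref{lem:topological_free_non_trivial} locates some $t\in S$ for which the Hilbert $A$-bimodule $M \defeq \Hilm_t\cdot I_{1,t}^\perp$ fails to be topologically non-trivial; unpacking Definition~\ref{def:top_non-trivial} then yields a non-zero ideal $J\idealin A$, which I may assume to sit inside $s(M)$, on whose spectrum $\widehat{M}$ acts as the identity.

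Next, I would analyse $A\rtimes_\ess T$ using that every element of $T$ is a product of an idempotent of $S$ with a power of $t$ or $t^*$, so the non-idempotent fibres of the restricted Fell bundle are quotients of tensor powers $\Hilm_t^{\otimes n}$; the dual groupoid $\widehat{A}\rtimes T$ is accordingly generated by iterates of $\widehat{M}$ and continues to have non-trivial isotropy over $\widehat{J}$. The crux is to convert this dual-side degeneracy into a concrete ``inner implementation'' of the bimodule: under the hypothesis that~$A$ contains an essential ideal which is separable or of Type~I, the bimodule version of the Olesen--Pedersen--Elliott--Kishimoto circle of results, developed in \cite{Kwasniewski-Meyer:Aperiodicity} (see also Remark~\ref{rem:aperiodic_vs_top_free}), should produce a partial unitary $u$ in $\Locmult(J)\subseteq I(A)$ such that $\Hilm_t\cdot J \cong J$ as Hilbert $A$-bimodules via left multiplication by $u$.

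Given this implementation, I would compare the canonical image of $\Hilm_t\cdot J$ in $A\rtimes_\ess T$ with the image of $J$ translated by $u$, viewed inside the multiplier algebra of $A\rtimes_\ess T$; their difference produces a non-zero element whose two-sided ideal meets $A$ trivially, contradicting the detection hypothesis and completing the contrapositive. The main obstacle, and the step that genuinely needs the essential separability or Type~I hypothesis, is this inner-implementation step: without it one obtains at most failure of aperiodicity via Theorem~\ref{the:top_free_vs_aperiodic}, which is not enough to build an explicit ideal. A secondary technical point is checking that the constructed element survives the quotient $A\rtimes_\red T \onto A\rtimes_\ess T$; since $J\subseteq I_{1,t}^\perp$, the bimodule $\Hilm_t\cdot J$ is orthogonal to $A$ under the canonical expectation to $\Locmult(A)$, so the difference does not lie in the ``null'' ideal $\Null$ cut out by the essential quotient.
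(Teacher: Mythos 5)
Your opening reduction is the same as the paper's: use Lemma~\ref{lem:topological_free_non_trivial} to locate a single \(t\in S\) and a non-zero ideal \(J\subseteq I_{1,t}^\bot\) on whose spectrum the dual partial homeomorphism of \(\Hilm_t\cdot J\) is the identity. The central step of your argument, however, does not work. You propose to upgrade this \emph{topological} triviality to an \emph{exact} inner implementation, i.e.\ to an isomorphism \(\Hilm_t\cdot J\cong J\) of Hilbert \(A\)\nb-bimodules given by a (partial) unitary. That is precisely the assertion that failure of topological freeness forces failure of pure outerness, and it is false in the separable case covered by the proposition. Indeed, for a separable \(\Cst\)\nb-algebra a Hilbert bimodule is aperiodic if and only if it is topologically non-trivial (Theorem~\ref{the:top_free_vs_aperiodic} combined with \cite{Kwasniewski-Meyer:Aperiodicity}*{Theorem~8.1}; see Remark~\ref{rem:aperiodic_vs_top_free}), so the purely outer but not properly outer automorphism of a separable \(\Cst\)\nb-algebra in \cite{Kwasniewski-Meyer:Aperiodicity}*{Example~2.14}, quoted in Example~\ref{exa:Kishimoto_automorphism}, is topologically trivial on some non-zero ideal and yet admits no inner implementation on any non-zero ideal. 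What the Olesen--Pedersen machinery yields when proper outerness fails is only an estimate \(\norm{\alpha|_J-\Ad(u)}<2\), and such an approximate implementation does not produce an element of the crossed product whose generated ideal misses \(A\); your ``difference'' construction genuinely needs equality. (This is also why Theorem~\ref{the:cycle_of_equivalences} deliberately omits pure outerness from the list of equivalences in the separable case; your route would be fine under the hypotheses of Theorem~\ref{the:cycle_of_equivalences_2}, and the exact-implementation argument you sketch is essentially the proof of Proposition~\ref{pro:intermediate_detection_to_purely_outer}, which only yields pure outerness.)

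The paper's proof never passes through an inner implementation. It assembles the tensor powers \(\Hilm[F]_k=(\Hilm_t\cdot J)^{\otimes_A k}\) into a Fell bundle over \(\Z\) inside \(A\rtimes_\ess S\) and invokes \cite{Kwasniewski-Meyer:Aperiodicity}*{Theorem~9.12}, which asserts directly --- via the Connes-spectrum analysis of Olesen--Pedersen --- that for Fell bundles over \(\Z\) or \(\Z/p\) with square-free \(p\), whose unit fibre contains a separable or Type~I essential ideal, topological freeness is equivalent to detection of ideals. That citation is where the separability/Type~I hypothesis does its work, and it cannot be replaced by the unitary you ask for. A second issue you pass over is periodicity: if some power \(\Hilm[F]_k\) with \(k\ge2\) meets \(A\) non-trivially, the \(\Z\)\nb-bundle representation is not injective, and one must shrink \(J\), pass to a Fell bundle over \(\Z/p\) with \(p\) prime, and replace \(t\) by a power \(t^{k_1}\) before quoting the theorem; this is exactly the case analysis that determines which inverse subsemigroup \(T\) one ends up using, and your description of \(A\rtimes_\ess T\) as ``generated by iterates of \(\dual{M}\)'' does not address it.
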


Before we prove this, we explain why \(A\rtimes_\ess T\) is
contained in \(A\rtimes_\ess S\):

\begin{lemma}
  \label{lem:embed_crossed_for_isg}
  Let \(T\subseteq S\) be an inverse subsemigroup.  If~\(T\)
  contains all idempotents in~\(S\), then
  \(A\rtimes_\ess T \subseteq A\rtimes_\ess S\).
\end{lemma}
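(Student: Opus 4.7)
The plan is to descend the canonical \Star{}homomorphism \(\iota\colon A\rtimes T \to A\rtimes S\) induced by the inclusion \(T\subseteq S\) at the level of Fell bundles to an injective \Star{}homomorphism \(\phi\colon A\rtimes_\ess T \hookrightarrow A\rtimes_\ess S\).

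First I would compare the canonical generalised expectations \(E_T\colon A\rtimes T \to \Locmult(A)\) and \(E_S\colon A\rtimes S \to \Locmult(A)\) used in~\cite{Kwasniewski-Meyer:Essential} to define \(A\rtimes_\ess T\) and \(A\rtimes_\ess S\). On the dense \Star{}subalgebras spanned by the fibres~\(A_t\), these expectations are given by explicit fibrewise formulas involving only the ideals \(I_{1,t}\subseteq A\); and by~\eqref{eq:Itu}, each \(I_{1,t}\) depends only on the idempotents \(v\in S\) with \(v\le t,1\). Since~\(T\) contains every idempotent of~\(S\), for each \(t\in T\) the ideal~\(I_{1,t}\) is the same whether computed in~\(T\) or in~\(S\); hence \(E_S\circ \iota = E_T\) on the algebraic crossed product, and then on all of \(A\rtimes T\) by continuity.

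By Remark~\ref{rem:ess_defined_vs_pseudo-expectation}, \(A\rtimes_\ess T=(A\rtimes T)/\Null_T\) and \(A\rtimes_\ess S=(A\rtimes S)/\Null_S\), where \(\Null_T\) and \(\Null_S\) are the largest two-sided ideals contained in \(\ker E_T\) and \(\ker E_S\), respectively. Let \(\pi_S\colon A\rtimes S\onto A\rtimes_\ess S\) be the quotient map. The kernel of \(\pi_S\circ\iota\) is an ideal of \(A\rtimes T\) contained in \(\ker(E_S\circ\iota)=\ker E_T\), so by maximality of~\(\Null_T\) we have \(\iota^{-1}(\Null_S)\subseteq \Null_T\), which already yields an injective \Star{}homomorphism \((A\rtimes T)/\iota^{-1}(\Null_S)\hookrightarrow A\rtimes_\ess S\).

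The main obstacle is the reverse inclusion \(\iota(\Null_T)\subseteq \Null_S\), which is required to factor through the quotient \(A\rtimes_\ess T\). This is delicate because \(\iota(\Null_T)\) is a priori only an ideal of \(\iota(A\rtimes T)\) and could generate in \(A\rtimes S\) an ideal that escapes \(\ker E_S\). I would prove it by showing that the two-sided ideal \(J\idealin A\rtimes S\) generated by \(\iota(\Null_T)\) is still contained in \(\ker E_S\); maximality of \(\Null_S\) then forces \(J\subseteq \Null_S\). The hypothesis that every idempotent of~\(S\) lies in~\(T\) enters crucially here: using the \(A\)\nb-bimodule property of generalised expectations from \cite{Kwasniewski-Meyer:Essential}*{Lemma~3.2} together with the fibrewise formula for~\(E_S\), the verification reduces to checking, fibre by fibre, that products of the form \(y_1\cdot\iota(x)\cdot y_2\) with \(y_1\in A_s\), \(y_2\in A_r\), and \(x\in \Null_T\) are annihilated by \(E_S\); since the ideals \(I_{1,u}\) controlling \(E_S\) on each fibre \(A_u\) are determined entirely by idempotents of~\(S\) (all of which lie in~\(T\)), this reduces to the ideal property of~\(\Null_T\) inside \(A\rtimes T\) together with the first step of the argument. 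Granting this, \(\iota^{-1}(\Null_S)=\Null_T\), and \(\iota\) descends to the desired injective \Star{}homomorphism \(\phi\colon A\rtimes_\ess T\hookrightarrow A\rtimes_\ess S\).
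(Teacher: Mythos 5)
Your first step coincides with the paper's: both arguments establish \(E^S\circ\iota=E^T\) from the observation that the ideals \(I_{1,t}\) for \(t\in T\) are the same whether computed in \(T\) or in \(S\), because \(T\) contains all idempotents of~\(S\). The gap is in the second half. You correctly isolate the hard point, namely \(\iota(\Null_T)\subseteq\Null_S\), but the argument you sketch for it does not go through as stated: to show that the two-sided ideal of \(A\rtimes S\) generated by \(\iota(\Null_T)\) lies in \(\ker E^S\), you must control \(E^S(y_1\,\iota(x)\,y_2)\) for \(y_1,y_2\) in fibres indexed by elements of \(S\setminus T\). Such products are not in the image of~\(\iota\), so ``the ideal property of \(\Null_T\) inside \(A\rtimes T\)'' gives no information about them, and the fibrewise description of \(E^S\) on the fibre containing \(y_1\,\iota(x)\,y_2\) involves ideals \(I_{1,u}\) for \(u\notin T\); the reduction you describe is not actually available, and you do not carry it out.

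The paper sidesteps this entirely by invoking \cite{Kwasniewski-Meyer:Essential}*{Theorem~4.11}: the canonical generalised expectations are symmetric, that is, the expectations induced on \(A\rtimes_\ess T\) and \(A\rtimes_\ess S\) are faithful, so that \(\Null_T=\setgiven{x}{E^T(x^*x)=0}\) and \(\Null_S=\setgiven{y}{E^S(y^*y)=0}\). Combined with \(E^S\circ\iota=E^T\), this gives \(\iota^{-1}(\Null_S)=\Null_T\) in one line: \(\iota(x)\) becomes \(0\) in \(A\rtimes_\ess S\) if and only if \(E^S(\iota(x^*x))=E^T(x^*x)=0\), if and only if \(x\) becomes \(0\) in \(A\rtimes_\ess T\). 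There is then no need to analyse the ideal of \(A\rtimes S\) generated by \(\iota(\Null_T)\). This symmetry (or faithfulness on the essential quotient) is the missing ingredient in your proposal; without it the descent argument is incomplete.
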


\begin{proof}
  The inclusion \(T\subseteq S\) induces a canonical
  \Star{}homomorphism \(j\colon A\rtimes T \to A\rtimes S\).  The
  issue is to prove that it descends to an injective
  \Star{}homomorphism between the essential crossed products.
  If~\(T\) contains all idempotents in~\(S\), then it follows that
  the ideals~\(I_{1,t}\) for \(t\in T\) defined in~\eqref{eq:Itu}
  are the same when computed in \(S\) or~\(T\).  The generalised
  expectations \(E^T\colon A\rtimes T \to \Locmult(A)\) and
  \(E^S\colon A\rtimes S \to \Locmult(A)\) are defined in
  \cite{Kwasniewski-Meyer:Essential}*{Proposition~4.3}.  Since the
  ideals~\(I_{1,t}\) are the same in both cases, it follows easily
  that \(E^S\circ j = E^T\).  The generalised expectations on
  \(A\rtimes_\ess S\) and \(A\rtimes_\ess T\) induced by \(E^S\)
  and~\(E^T\) are faithful by
  \cite{Kwasniewski-Meyer:Essential}*{Theorem~4.11}.  That is,
  \(j(x)\) becomes~\(0\) in \(A\rtimes_\ess S\) if and only if
  \(E^S(j(x^* x))=0\), if and only if \(E^T(x^* x)=0\), if and only
  if~\(x\) becomes~\(0\) in \(A\rtimes_\ess T\).  This means
  that~\(j\) descends to an injective map
  \(A\rtimes_\ess T \hookrightarrow A\rtimes_\ess S\).
\end{proof}

\begin{remark}
  If an intermediate \(\Cst\)\nb-algebra
  \(A\subseteq C \subseteq A\rtimes_\ess S\) is equal to
  \(A\rtimes_\ess T\) for some~\(T\) as in
  Lemma~\ref{lem:embed_crossed_for_isg}, then it is \(S\)\nb-graded,
  that is, \(C\) is the closed linear span of \(C\cap \Hilm_t\) for
  \(t\in S\).
\end{remark}

\begin{proof}[Proof of
  Proposition~\textup{\ref{pro:intermediate_detection_to_top-free_isg}}]
  We proceed by contradiction and assume that the action~\(\Hilm\)
  is not topologically free.  Then, by Lemma \ref{lem:topological_free_non_trivial},
	there are \(t\in S\) and a
  non-zero ideal \(J\subseteq I_{1,t}^\bot\) such that the Hilbert
  \(A\)\nb-bimodule \(\Hilm[F]\defeq \Hilm_t\cdot J\) is
  topologically trivial.  That is,
  \(\Hilm[F]\otimes_A \varrho \cong \varrho\) for all irreducible
  representations~\(\varrho\) of~\(J\).  The key step in the proof
  is based on \cite{Kwasniewski-Meyer:Aperiodicity}*{Theorem~9.12}
  which, in turn, is based on the work of Olesen--Pedersen in
  \cites{Olesen-Pedersen:Applications_Connes,
    Olesen-Pedersen:Applications_Connes_2,
    Olesen-Pedersen:Applications_Connes_3}.  That theorem is about a
  section \(\Cst\)\nb-algebra~\(C\) for a Fell bundle over \(\Z\)
  or~\(\Z/p\) for a square-free number~\(p\) whose unit fibre~\(A\)
  contains an essential ideal that is separable or of Type~I.  Then
  the inclusion \(A\subseteq C\) is topologically free if and only
  if~\(A\) detects ideals in~\(C\).  We are going to build a Fell
  bundle over \(\Z\) or~\(\Z/p\) from~\(\Hilm[F]\) in such a way
  that its section \(\Cst\)\nb-algebra~\(C\) is an intermediate
  \(\Cst\)\nb-algebra between \(A\) and \(A\rtimes_\ess S\).

  Let \(\Hilm[F]_0 \defeq J\),
  \(\Hilm[F]_k \defeq \Hilm[F]^{\otimes_A k}\) and
  \(\Hilm[F]_{-k} \defeq (\Hilm[F]^*)^{\otimes_A k}\) for \(k>0\).
  These are slices for the inclusion \(A\subseteq A\rtimes_\ess S\),
  and they form a Fell bundle over~\(\Z\) with the multiplication
  and involution in~\(A\rtimes_\ess S\).  The maps
  \(\Hilm[F]_k \hookrightarrow A\rtimes_\ess S\) form a Fell bundle
  representation.  They induce a \Star{}homomorphism
  \(\varphi_0\colon \Cst(\Z, (\Hilm[F]_k)) \to A\rtimes_\ess S\).
  Assume first that~\(\varphi_0\) is injective.  By assumption, the
  Hilbert bimodule~\(\Hilm[F]_1\) is topologically trivial.  Hence
  the inclusion \(J\hookrightarrow \Cst(\Z, (\Hilm[F]_k))\) cannot
  detect ideals by
  \cite{Kwasniewski-Meyer:Aperiodicity}*{Theorem~9.12}.  Next, we
  replace \(\Cst(\Z, (\Hilm[F]_k))\) by an intermediate
  \(\Cst\)\nb-algebra of the form \(A\rtimes_\ess T\).  Namely, we
  let~\(T\) be the inverse subsemigroup generated by~\(t\) and the
  idempotent elements of~\(S\).  So all elements of~\(T\) are of the
  form \(t^k\cdot e\) for some \(k\in\Z\) and an idempotent
  element~\(e\) of~\(S\).  Now \(J\subseteq A\) is an invariant
  ideal for the action of~\(T\) on~\(A\) and
  \(J\cdot (A\rtimes_\ess T) = (A\rtimes_\ess T) \cdot J = \Cst(\Z,
  (\Hilm[F]_k))\) (see
  \cite{Kwasniewski-Meyer:Stone_duality}*{Proposition~6.19}).
  Therefore, since \(J\hookrightarrow \Cst(\Z, (\Hilm[F]_k))\) does
  not detect ideals, neither does \(A\subseteq A\rtimes_\ess T\).
  This finishes the proof in the case where~\(\varphi_0\) is
  injective.

  It remains to study the case when~\(\varphi_0\) is not injective.
  Let \(E\colon A\rtimes_\ess S \to \Locmult(A)\) be the canonical
  essentially defined expectation.  By definition, if \(u\in S\),
  then~\(E\) is the identity map on \(\Hilm_u \cap I_{u,1}\) and
  vanishes on \(\Hilm_{u} \cap I_{u,1}^\bot\).  By construction,
  \(\Hilm[F]_k \subseteq \Hilm_{t^k}\) for \(k\ge0\) and
  \(\Hilm[F]_1\subseteq  \Hilm_{t} I_{1,t}^\bot\).  We claim that
  there must be \(k\ge2\) for which
  \(\Hilm[F]_k \not\subseteq \Hilm_{t^k} \cdot I_{t^k,1}^\bot\).
  Otherwise,
  \(\Hilm[F]_k \subseteq \Hilm_{t^k} \cdot I_{t^k,1}^\bot\) holds
  for all \(k\ge1\).  Then
  \(E|_{\Hilm[F]_k} = 0\) for all \(k>0\).  Then
  \(E \circ \varphi_0\) is equal to the canonical conditional
  expectation \(\Cst(\Z,(\Hilm[F]_k)) \to A\).  The latter is
  faithful because~\(\Z\) is amenable, and so~\(\varphi_0\) is
  injective.  %But we assumed~\(\varphi_0\) is not injective.
  Hence there are \(k\ge1\) for which~\(\Hilm[F]_k\) is not
  contained in \(\Hilm_{t^k} \cdot I_{t^k,1}^\bot\).  We pick the
  minimal such~\(k\).  So \(E|_{\Hilm[F]_n}=0\) for
  \(n=1,\dotsc,k-1\) and
  \(\Hilm[F]_k \cap \Hilm_{t^k} \cdot I_{t^k,1} \neq 0\).  This
  intersection is equal to \(\Hilm_{t^k} \cdot K\) for some non-zero
  ideal \(K\subseteq I_{t^k,1} \cap J\).  We replace~\(J\) by~\(K\).
  This improves matters in such a way that
  \(\Hilm[F]_k \subseteq \Hilm_{t^k} \cdot I_{t^k,1}\).  This is
  contained in \(A \subseteq A\rtimes_\ess S\), and it is, in fact,
  equal to the chosen ideal~\(K\).  It follows that
  \(\Hilm[F]_n \cdot \Hilm[F]_k = \Hilm[F]_n\) for all \(n\ge0\).
  Then \((\Hilm[F]_n)_{n=1,\dotsc,k}\) is a Fell bundle
  over~\(\Z/k\) and the inclusions
  \(\Hilm[F]_n \hookrightarrow A\rtimes_\ess S\) form a Fell bundle
  representation.  This induces an injective \Star{}homomorphism
  \(\Cst(\Z/k,(\Hilm[F]_n)) \hookrightarrow A\rtimes_\ess S\).
  If~\(k\) is square-free, then
  \cite{Kwasniewski-Meyer:Aperiodicity}*{Theorem~9.12} shows
  that~\(K\) does not detect ideals in \(\Cst(\Z/k,(\Hilm[F]_n))\).
  In general, we write \(k=p\cdot k_1\) with a prime number~\(p\).
  An argument as above shows that~\(K\) does not detect ideals in
  \(\Cst(\Z/p,(\Hilm[F]_{k_1\cdot n})_{n=1,\dotsc,p})\).  Let~\(T\)
  be the inverse subsemigroup generated by~\(t^{k_1}\) and all
  idempotent elements of~\(S\).  The same argument as in the case
  where~\(\varphi_0\) is injective shows that~\(A\) does not detect
  ideals in~\(A\rtimes_\ess T\).
\end{proof}

\begin{proposition}
  \label{pro:intermediate_detection_to_top-free_groupoid}
 Let~\(\A\) be
  a Fell bundle over an étale groupoid~\(G\)  with a locally compact Hausdorff unit space~\(X\).  Assume
  that \(A\defeq \Cont_0(X,\A|_X)\) contains an essential ideal that
  is separable or of Type~I.  Assume that~\(A\) detects ideals in
  \(\Cst_\ess(H,\A|_H)\) for any open subgroupoid
  \(X \subseteq H\subseteq G\) that is generated by a single open
  bisection \(U\subseteq G\).  Then the dual groupoid of the Fell
  bundle~\(\A\) is topologically free.
\end{proposition}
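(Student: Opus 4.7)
The plan is to reduce to Proposition~\ref{pro:intermediate_detection_to_top-free_isg}. First I would fix a unital inverse semigroup~\(S\) acting on~\(A\) by Hilbert bimodules that is built from the Fell bundle~\(\A\) as in the discussion preceding~\eqref{eq:groupoid_isomorphisms}, so that the dual groupoids \(\dual A \rtimes S\) and \(\dual A \rtimes G\) coincide and the isomorphisms in~\eqref{eq:groupoid_isomorphisms} hold. In particular, topological freeness of the dual groupoid of~\(\A\) is equivalent to topological freeness of the \(S\)\nb-action on~\(A\), so it suffices to verify the latter. The essential-ideal hypothesis on~\(A\) is the same on both sides.

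Next I would check the ideal-detection hypothesis of Proposition~\ref{pro:intermediate_detection_to_top-free_isg}: for every inverse subsemigroup \(T\subseteq S\) generated by the idempotents of~\(S\) together with a single element \(t\in S\), I need~\(A\) to detect ideals in \(A\rtimes_\ess T\). The plan is to match such a~\(T\) with an open subgroupoid \(H \subseteq G\) generated by a single bisection and the unit space~\(X\): the element~\(t\) is realised by a Hilbert subbimodule of sections of~\(\A\) over some bisection \(U \subseteq G\), and the open subgroupoid \(H \subseteq G\) generated by~\(U\) and~\(X\) then contains (the supports of) all products of copies of~\(t\), \(t^*\) and idempotents of~\(S\). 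Combining Lemma~\ref{lem:embed_crossed_for_isg} with the groupoid-to-inverse-semigroup translation applied separately to~\(H\), I would identify \(A\rtimes_\ess T\) with \(\Cst_\ess(H,\A|_H)\) as a \(\Cst\)\nb-subalgebra of \(A\rtimes_\ess S \cong \Cst_\ess(G,\A)\). The assumption of the proposition then directly yields that~\(A\) detects ideals in \(A\rtimes_\ess T\) for every such~\(T\).

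With both hypotheses verified, Proposition~\ref{pro:intermediate_detection_to_top-free_isg} gives that the \(S\)\nb-action on~\(A\) is topologically free, which is the desired conclusion. The main technical obstacle is the identification \(A\rtimes_\ess T \cong \Cst_\ess(H,\A|_H)\). Two points need care: first, the idempotents of the modified inverse semigroup~\(S\) used in the groupoid translation correspond to ideals in~\(A\), not merely to open subsets of~\(X\), and one must check that adjoining them to the semigroup generated by~\(t\) does not enlarge the essential crossed product beyond the sections supported in~\(H\); second, one must check that the canonical pseudo-expectations defining the essential crossed products on both sides agree under the translation. Both should follow by specialising the arguments of \cite{Kwasniewski-Meyer:Essential}*{Section~7} (in particular the proof of the third isomorphism in~\eqref{eq:groupoid_isomorphisms}) to the subsystem indexed by~\(T\), respectively to the open subgroupoid~\(H\).
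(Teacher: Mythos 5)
Your overall strategy --- translate to the inverse semigroup picture and feed the hypothesis into Proposition~\ref{pro:intermediate_detection_to_top-free_isg} --- is how the paper's proof also begins, but the step where you ``identify \(A\rtimes_\ess T\) with \(\Cst_\ess(H,\A|_H)\)'' for every inverse subsemigroup \(T\) generated by the idempotents and a single \(t\in S\) is a genuine gap, for two reasons. First, the inverse semigroup \(S\) attached to the Fell bundle consists of the Hilbert subbimodules \(\A_U\cdot J\) for \emph{all} ideals \(J\) in \(A\) (this is forced if one wants a saturated Fell bundle over \(S\)); for \(t=\A_U\cdot J\) with \(J\) a proper ideal, and likewise for non-saturated \(\A\), where \(\A_U\cdot \A_U\subsetneq \A_{U^2}\), the crossed product \(A\rtimes_\ess T\) is only a proper intermediate subalgebra of \(\Cst_\ess(H,\A|_H)\), not equal to it. Second, and decisively, detection of ideals does not pass from a \(\Cst\)\nb-algebra to an intermediate subalgebra (this is precisely why these propositions must quantify over all intermediate algebras), so knowing that \(A\) detects ideals in \(\Cst_\ess(H,\A|_H)\) tells you nothing about detection in the smaller algebra \(A\rtimes_\ess T\). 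Hence the hypothesis of Proposition~\ref{pro:intermediate_detection_to_top-free_isg} cannot be verified in the way you propose.

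The paper avoids this by arguing contrapositively rather than verifying the inverse semigroup hypothesis wholesale: if the dual groupoid is not topologically free, one extracts a single bisection \(V\subseteq G\) and an ideal \(J\) witnessing the failure, forms \(H\defeq X\cup\bigcup_{k\in\Z}V^k\), and observes that \(\Cst_\ess(H,\A|_H)\) is the essential section algebra of the Fell bundle \((\A_{V^k})_k\) over \(\Z\) or over \(\Z/p\) after the same reduction (shrinking \(J\), replacing \(V\) by a suitable power) as in the proof of Proposition~\ref{pro:intermediate_detection_to_top-free_isg}. Since the dual groupoid of the \(H\)\nb-action is not topologically free, \cite{Kwasniewski-Meyer:Aperiodicity}*{Theorem~9.12} shows that \(A\) fails to detect ideals in \(\Cst_\ess(H,\A|_H)\) itself, contradicting the hypothesis. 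If you want to keep a positive formulation, you would have to restrict attention to the subsemigroups \(T\) that actually arise from a witness of non-topological-freeness --- which is exactly the contrapositive route.
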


\begin{proof}
  We may rewrite the essential section \(\Cst\)\nb-algebra
  \(B\defeq \Cst_\ess(G,\A)\) as the essential crossed product
  \(A\rtimes_\ess S\) for an inverse semigroup action on~\(A\) (see
  Definition~\ref{def:groupoid_action_aperiodic}).  The dual groupoids
  for both actions are the same, so that topological freeness is
  preserved.
  Proposition~\ref{pro:intermediate_detection_to_top-free_isg} shows
  that the dual groupoid \(A\rtimes G\) is topologically free
  if~\(A\) detects ideals in all intermediate
  \(\Cst\)\nb-algebras \(A\subseteq C \subseteq B\).  It remains to
  show that the special intermediate algebras of the form
  \(\Cst_\ess(H,\A|_H)\) for open subgroupoids
  \(X \subseteq H\subseteq G\) suffice to carry through the proof of
  Proposition~\ref{pro:intermediate_detection_to_top-free_isg}.

  For this, we first describe the inverse semigroup~\(S\).  Each
  bisection \(U\subseteq G\) defines a subspace
  \(\A_U \subseteq \Cst_\ess(G,\A)\) of sections supported in~\(U\).
  This subspace belongs to the inverse semigroup
  \(\mathcal{S}(A,B)\) in
  \cite{Kwasniewski-Meyer:Essential}*{Proposition~2.12}.  We
  let~\(S\) be the inverse subsemigroup of \(\mathcal{S}(A,B)\)
  generated by \(\A_U\cdot J\) for all ideals~\(J\)
  in~\(A\).  By definition, \(B\) is graded by the inverse
  semigroup~\(S\).  This gives a surjective \Star{}homomorphism
  \(\varphi\colon A\rtimes S \to B\).  Composing the canonical
  generalised expectation \(B = \Cst_\ess(G,\A) \to \Locmult(A)\)
  with~\(\varphi\) gives the canonical generalised expectation
  \(A\rtimes S \to \Locmult(A)\).  Therefore, \(\varphi\) induces an
  isomorphism \(A\rtimes_\ess S \cong B\).  Now assume that the dual
  groupoid \(\dual{A}\rtimes G\) is not topologically free.  Then
  there is a non-zero open bisection
  \(U\subseteq \dual{A}\rtimes G \setminus \dual{A}\) with
  \(r|_U= s|_U\).  We may assume that the bisection~\(U\) ``lives''
  on a bisection \(V\subseteq G\) on~\(\dual{A}\).  More precisely,
  the partial homeomorphism corresponding to~\(U\) is the
  homeomorphism dual to the Hilbert \(A\)\nb-bimodule
  \(\A_V \cdot J\) for an ideal \(J\subseteq s(\A_V)\).  The union
  \(H\defeq X \cup \bigcup_{k\in\Z} V^k\) is an open subgroupoid
  of~\(G\) containing the units.  As in the proof of
  Proposition~\ref{pro:intermediate_detection_to_top-free_isg}, we
  may arrange that either \(V^p \subseteq X\) for some square-free
  number~\(p\) or \(V^k \cap X = \emptyset\) for all
  \(k\in\N_{\ge1}\).  Then \(\A_k\defeq \A_{V^k}\) for
  \(k=1,\dotsc,p\) or for \(k\in\Z\) defines a Fell bundle over the
  group~\(\Z/p\) or over~\(\Z\).  The section \(\Cst\)\nb-algebra of
  this Fell bundle is isomorphic to \(\Cst_\ess(H,\A|_H)\).  Now we
  can argue as in the proof of
  Proposition~\ref{pro:intermediate_detection_to_top-free_isg}
  that~\(A\) cannot detect ideals in \(\Cst_\ess(H,\A|_H)\) because
  the dual groupoid for the action of~\(H\) on~\(A\) is not
  topologically free.
\end{proof}

\begin{corollary}
  \label{cor:intermediate_detection_to_top-free_group}
  Let~\(\A\) be a Fell bundle over a discrete group~\(G\).  Assume
  that \(A\defeq A_1\) contains an essential ideal that is separable
  or of Type~I.  If~\(A\) detects ideals in \(\Cst_\red(H,\A|_H)\)
  for any cyclic subgroup \(H\subseteq G\), then the dual groupoid
  of the Fell bundle~\(\A\) is topologically free.
\end{corollary}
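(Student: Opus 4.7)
The plan is to deduce this as a special case of Proposition~\ref{pro:intermediate_detection_to_top-free_groupoid} by viewing a discrete group~\(G\) as an étale groupoid whose unit space \(X=\{*\}\) is a single point (which is trivially locally compact and Hausdorff). A Fell bundle over~\(G\) in the group sense is then precisely a Fell bundle over this one-object groupoid, with \(A = \Cont_0(X,\A|_X) = A_1\), so the essential-ideal hypothesis on~\(A\) matches. Since \(G\) is discrete and Hausdorff as a groupoid, the reduced and essential section \(\Cst\)\nb-algebras coincide both for~\(G\) and for any subgroup, so the detection hypothesis ``\(A\) detects ideals in \(\Cst_\red(H,\A|_H)\)'' is the same as ``\(A\) detects ideals in \(\Cst_\ess(H,\A|_H)\)''.

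Next I would identify which subgroupoids appear in the statement of Proposition~\ref{pro:intermediate_detection_to_top-free_groupoid}. Because~\(r\) and~\(s\) both collapse every element of~\(G\) to the single unit, a subset of~\(G\) is an open bisection if and only if it is a singleton \(\{g\}\) for some \(g\in G\) (or empty). An open subgroupoid \(X\subseteq H\subseteq G\) containing the unit space is exactly a subgroup of~\(G\), and the subgroupoid generated by a single open bisection \(\{g\}\) is the cyclic subgroup \(\langle g\rangle\). Therefore the hypothesis of the corollary is precisely the hypothesis needed to invoke Proposition~\ref{pro:intermediate_detection_to_top-free_groupoid}, and the conclusion, that the dual groupoid \(\dual{A}\rtimes G\) of the Fell bundle is topologically free, is exactly what the proposition delivers. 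There is no essential obstacle beyond checking these identifications; the substantive work was done in Propositions \ref{pro:intermediate_detection_to_top-free_isg} and~\ref{pro:intermediate_detection_to_top-free_groupoid}.
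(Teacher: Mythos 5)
Your proposal is correct and matches the paper's (implicit) derivation: the corollary is stated without proof precisely because it is the specialization of Proposition~\ref{pro:intermediate_detection_to_top-free_groupoid} to the one-point unit space, where open bisections are singletons, the subgroupoids they generate are the cyclic subgroups, and $\Cst_\ess=\Cst_\red$ because the groupoid is Hausdorff. All the identifications you check are the right ones.
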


If~\(A\) is arbitrary, then it is unclear whether detection of
ideals in intermediate \(\Cst\)\nb-algebras implies that the action
is topologically free.  Only a weaker statement follows.  Namely,
the action is purely outer in the following sense:

\begin{definition}
  A Hilbert \(A\)\nb-bimodule~\(\Hilm[H]\) over a
  \(\Cst\)\nb-algebra~\(A\) is \emph{purely
    outer}~\cite{Kwasniewski-Meyer:Aperiodicity} if there is no
  non-zero ideal \(J\in\Ideals(A)\) with \(\Hilm[H]\cdot J \cong J\)
  as a Hilbert \(A\)\nb-bimodule.  An action~\(\Hilm\) of an inverse
  semigroup on a \(\Cst\)\nb-algebra~\(A\) is \emph{purely
    outer}~\cite{Kwasniewski-Meyer:Essential} if the Hilbert
  \(A\)\nb-bimodules \(\Hilm_t\cdot I_{1,t}^\bot\) are purely outer
  for all \(t\in S\).
\end{definition}

\begin{proposition}
  \label{pro:intermediate_detection_to_purely_outer}
  Let~\(\Hilm\) be an action of a unital inverse semigroup~\(S\) on
  a \(\Cst\)\nb-algebra~\(A\).  Assume that~\(A\) detects ideals in
  \(A\rtimes_\ess T\) for any inverse subsemigroup \(T\subseteq S\)
  that is generated by the idempotents in~\(S\) together with a
  single \(t\in S\).  Then the action~\(\Hilm\) is purely outer.
\end{proposition}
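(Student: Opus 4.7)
The plan is to adapt the proof of Proposition~\ref{pro:intermediate_detection_to_top-free_isg} to the weaker setting of pure outerness, exploiting the fact that a failure of pure outerness immediately produces a literally \emph{trivial} \(\Z\)\nobreakdash-graded Fell bundle and hence bypasses the Olesen--Pedersen machinery that forced the separability or Type~I hypothesis there.

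I argue by contradiction. Suppose the action~\(\Hilm\) is not purely outer. Then there exist \(t\in S\) and a non-zero ideal \(J\subseteq I_{1,t}^\bot\) together with a Hilbert \(A\)\nobreakdash-bimodule isomorphism \(\Hilm[F] \defeq \Hilm_t\cdot J \cong J\), where \(J\) carries its natural bimodule structure as an ideal of~\(A\). Following the previous proof, I form the \(\Z\)\nobreakdash-graded Fell bundle \((\Hilm[F]_k)_{k\in\Z}\) with \(\Hilm[F]_0\defeq J\), \(\Hilm[F]_k \defeq \Hilm[F]^{\otimes_A k}\) for \(k>0\), and \(\Hilm[F]_{-k} \defeq (\Hilm[F]^*)^{\otimes_A k}\) for \(k>0\), and let \(\varphi_0\colon \Cst(\Z,(\Hilm[F]_k)) \to A\rtimes_\ess S\) denote the induced \Star{}homomorphism. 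Because \(\Hilm[F]\cong J\) and \(J\otimes_A J\cong J\) for the ideal~\(J\), each tensor power \(\Hilm[F]_k\) is isomorphic to~\(J\), so the entire abstract Fell bundle is isomorphic to the trivial \(\Z\)\nobreakdash-Fell bundle with fibre~\(J\). In particular, \(\Cst(\Z,(\Hilm[F]_k)) \cong J\otimes\Cst(\Z) \cong J\otimes\Cont(\T)\), and the degree-zero copy of \(J\) corresponds to \(J\otimes 1\).

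If \(\varphi_0\) is injective, then as in Case~1 of the previous proof, \cite{Kwasniewski-Meyer:Stone_duality}*{Proposition~6.19} identifies \(\Cst(\Z,(\Hilm[F]_k))\cong J\otimes\Cont(\T)\) with the ideal \(J\cdot(A\rtimes_\ess T) = (A\rtimes_\ess T)\cdot J\) of \(A\rtimes_\ess T\), where \(T\subseteq S\) is the inverse subsemigroup generated by~\(t\) and the idempotents. The non-zero ideal \(J\otimes\Cont_0(\T\setminus\{1\})\) meets \(J\otimes 1\cong J\) trivially; since ideals of an ideal of a \(\Cst\)\nobreakdash-algebra are themselves ideals, and since \(J\cdot(A\rtimes_\ess T)\cap A = J\) (each piece \(\Hilm_{t^k}\cdot J\) meets~\(A\) only in \(J\cdot I_{1,t^k}\subseteq J\)), this produces a non-zero ideal of \(A\rtimes_\ess T\) intersecting~\(A\) trivially, contradicting the hypothesis.

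If \(\varphi_0\) is not injective, the dichotomy argument from Case~2 of the previous proof applies: using the canonical generalised expectation \(E\colon A\rtimes_\ess S\to\Locmult(A)\) and amenability of~\(\Z\), one produces a minimal \(k\ge 2\) with \(\Hilm[F]_k \not\subseteq \Hilm_{t^k}\cdot I_{t^k,1}^\bot\), and then replaces~\(J\) by a suitable non-zero subideal \(K\subseteq I_{t^k,1}\cap J\) so that \(\Hilm[F]_k\cdot K = K\) inside~\(A\). Restricting the given bimodule isomorphism to~\(K\) yields \(\Hilm_t\cdot K \cong K\) by \(A\)\nobreakdash-bimodule linearity, so the collapsed Fell bundle over \(\Z/k\) is again trivial with fibre~\(K\), and its section \(\Cst\)\nobreakdash-algebra is \(K\otimes\Cst(\Z/k) \cong K^k\); any coordinate ideal meets the diagonal copy of~\(K\) trivially, and the same reduction to \(A\rtimes_\ess T\) (with~\(T\) now generated by \(t^{k_1}\), for \(k=p\cdot k_1\) with~\(p\) prime, and the idempotents) contradicts the hypothesis. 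The main technical point is verifying that the restriction of the bimodule isomorphism \(\Hilm[F]\cong J\) to the subideal~\(K\) produces the required isomorphism \(\Hilm_t\cdot K\cong K\) and that the collapsed Fell bundle over~\(\Z/k\) retains its trivial form; beyond this, the proof is a direct transcription of the framework already developed for Proposition~\ref{pro:intermediate_detection_to_top-free_isg}, with the trivial-bundle analysis taking the place of the Olesen--Pedersen input.
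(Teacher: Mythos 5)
Your overall strategy --- trivialise the Fell bundle generated by \(\Hilm[F]=\Hilm_t\cdot J\cong J\) and exhibit an undetected ideal in the resulting trivial section \(\Cst\)\nb-algebra, thereby bypassing the Olesen--Pedersen input and with it any separability or Type~I hypothesis --- is exactly the paper's, and your treatment of the case where \(\varphi_0\) is injective is essentially the paper's argument in different clothing: after reducing to \(A=J\) and \(S=T\), the paper builds the \Star{}homomorphism \(A\rtimes_\ess T\to A\) induced by the trivialisation and takes its kernel, which is precisely your ideal \(J\otimes\Cont_0(\T\setminus\{1\})\).

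The gap is in the \(\Z/k\) case. There the unit fibre of the collapsed bundle is \(\Hilm[F]_k=K\) \emph{with the identification coming from the inclusion into \(A\rtimes_\ess S\)}, and the composite isomorphism \(K\cong\Hilm[F]^{\otimes_A k}\congto\Hilm[F]_k=K\) obtained by iterating your trivialisation \(\varphi\) need not be the identity: it is multiplication by a central unitary multiplier \(u\) of~\(K\). So the bundle over \(\Z/k\) is only trivial up to this holonomy, its section \(\Cst\)\nb-algebra is a twisted crossed product rather than \(K\otimes\Cst(\Z/k)\cong K^{k}\), and the coordinate ideals you invoke need not exist. (Concretely, for \(K=\Cont(\T)\), \(k=2\) and \(u(z)=z\), the section algebra is a copy of \(\Cont(\T)\) containing \(K\) via a double cover, not \(\Cont(\T)\oplus\Cont(\T)\).) The obstruction is a degree-two cohomology class of \(\Z/k\) with coefficients in the central unitary multipliers of~\(K\), and this is non-trivial in general. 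This is precisely where the paper's proof does its real work: it shrinks \(K\) so that the spectrum of \(u\) omits a point of the circle, extracts a \(p\)th root of \(u\) in the central multiplier algebra, and rescales \(\varphi\) by its inverse so that the induced map \(K=\Hilm[F]_p\congto K\) becomes the identity; only then does one obtain a genuine Fell bundle representation into \(K\) and hence the undetected kernel ideal. You flag this as ``the main technical point'' but then assert rather than prove it, and the assertion that the collapsed bundle ``retains its trivial form'' is false without the root-extraction step. Note also that this step is \emph{not} present in the proof of Proposition~\ref{pro:intermediate_detection_to_top-free_isg}, which instead feeds the \(\Z/p\)\nb-bundle into the Olesen--Pedersen theorem, so it cannot be obtained by a direct transcription of that framework.
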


\begin{proof}
  Assume that the action is not purely outer.  Then there are
  \(t\in S\) and a non-zero ideal \(J\subseteq I_{1,t}^\bot\) such
  that there is an isomorphism  of Hilbert \(A\)\nb-bimodules
  \[
    \varphi\colon \Hilm_t \cdot J \congto J.
  \]
    As in the proof of
  Proposition~\ref{pro:intermediate_detection_to_top-free_isg}, we
  may shrink~\(J\) and replace~\(t\) by~\(t^{k_1}\) for some
  \(k_1\in\N\) to arrange that either \(J \subseteq I_{1,t^k}^\bot\)
  for all \(k\ge1\) or \(J \subseteq I_{1,t^k}^\bot\) for
  \(k=1,\dotsc,p\) and \(J\subseteq I_{1,t^p}\) for some square-free
  number~\(p\); we put \(p=0\) in the first case.  Let
  \(T\subseteq S\) be generated by the idempotent elements of~\(S\)
  and~\(t\).  Then \(A\rtimes_\ess T\subseteq A\rtimes_\ess S\).
  The ideal~\(J\) is \(T\)\nb-invariant or, equivalently,
  \(J\cdot (A\rtimes_\ess T) = (A\rtimes_\ess T)\cdot J\).  The
  latter is canonically isomorphic to the section
  \(\Cst\)\nb-algebra of a Fell bundle over~\(\Z/p\).

  We may, without loss of generality, assume that \(S=T\), \(A=J\)
  and \(\Hilm_t = \Hilm_t\cdot J\) for all \(t\in S\) to simplify
  notation.  After this change, \(A\rtimes_\ess S\) becomes the
  section \(\Cst\)\nb-algebra of a Fell bundle over~\(\Z/p\).  Since
  we replaced~\(A\) by~\(J\), the isomorphism~\(\varphi\) becomes an
  isomorphism \(\varphi\colon \Hilm_t \congto A\).  This generates
  isomorphisms
  \[
    \Hilm_{t^k} \cong \Hilm_t^{\otimes_A k}
    \cong A^{\otimes_A k} \cong A
  \]
  for all \(k\in\N\).  If \(p=0\), then these maps together form a
  Fell bundle representation of our Fell bundle into~\(A\).  They
  induce a \Star{}homomorphism
  \(A\rtimes_\ess S = A\rtimes S \to A\).  Since it is the identity
  map on~\(A\), its kernel is an ideal~\(J\) in \(A\rtimes_\ess S\)
  with \(J\cap A = 0\).  If \(p\neq0\), then \(\Hilm_{t^p} = A\)
  inside \(A\rtimes_\ess S\).  So~\(\varphi\) above generates a
  Hilbert bimodule isomorphism \(A \congto A\).  This is the same as
  a central unitary multiplier of~\(A\).  Shrinking the ideal~\(J\)
  further, we may arrange that the spectrum of this central
  multiplier is not the entire unit circle.  We assume this for
  simplicity, and without loss of generality.  Then it has a \(p\)th
  root in the central multiplier algebra of~\(A\).
  Multiplying~\(\varphi\) with the inverse of this \(p\)th root
  gives another isomorphism \(\Hilm_t \cong A\), such that the
  resulting map \(A = \Hilm_{t^p} \congto A\) is the identity.  So
  we get a Fell bundle representation once again.  As above, this
  induces a \Star{}homomorpism \(A\rtimes_\ess S \to A\) that is the
  identity map on~\(A\), and its kernel is a non-zero ideal~\(J\) in
  \(A\rtimes_\ess S\) with \(J\cap A = 0\).
\end{proof}

A version of
Proposition~\ref{pro:intermediate_detection_to_top-free_isg} for
Fell bundles over étale, locally compact groupoids is also true.
The details are left to the reader.

\section{Equivalence of various conditions for crossed products}
\label{sec:various_conditions}

We have proven several implications among properties of a
\(\Cst\)\nb-inclusion \(A\subseteq B\).  We summarised them in the
diagram in Figure~\ref{fig:diagram} (in the introduction) and added a few implications that
are known from previous work.

The left column in Figure~\ref{fig:diagram} is valid for any
\(\Cst\)\nb-inclusion \(A\subseteq B\).  Here we pick a
pseudo-expectation \(E\colon B\to I(A)\) and let~\(\Null\) be the
largest ideal contained in \(\ker E\).  The whole diagram is valid
if~\(B\) is an exotic crossed product for an action on~\(A\) of an
inverse semigroup, or an étale groupoid with locally compact
Hausdorff unit space.  In both cases, the dual groupoid is defined
and there is a natural pseudo-expectation~\(E\) for
\(A\subseteq B\), namely, the canonical generalised expectation
\(B\to \Locmult(A)\) from~\cite{Kwasniewski-Meyer:Essential}
composed with the inclusion \(\Locmult(A) \hookrightarrow I(A)\)
from \cite{Frank:Injective_local_multiplier}*{Theorem~1} (see also
Remark~\ref{rem:ess_defined_vs_pseudo-expectation}).  We use this
pseudo-expectation to define~\(\Null\).  Then~\(B/\Null\) in
Figure~\ref{fig:diagram} is the essential crossed product for the
action, as introduced in~\cite{Kwasniewski-Meyer:Essential}.

Let~\(G\) be an étale groupoid with a locally compact Hausdorff unit
space \(X\).  An action of~\(G\) on the \(\Cst\)\nb-algebra~\(A\)
means a possibly non-saturated Fell bundle~\(\A\) over~\(G\) with
\(A\cong \Cont_0(X,\A|_X)\) (see
Definition~\ref{def:groupoid_action_aperiodic}).  We will call an
action of an étale groupoid purely outer if the corresponding
inverse semigroup action is purely outer.  The claims in
Figure~\ref{fig:diagram} for such actions of~\(G\) follow from the
corresponding claims for inverse semigroup actions.  Fell bundles
over groups are also covered by this.  For Fell bundles over groups
and, more generally, for Fell bundles over \emph{Hausdorff}, étale,
locally compact groupoids, the essential and reduced crossed
products are equal.  More generally, this happens for inverse
semigroup actions if the canonical pseudo-expectation is a genuine
expectation, that is, its values are contained in~\(A\).

Some of the implications in Figure~\ref{fig:diagram} only work if
the pseudo-expectation~\(E\) that is used to define~\(\Null\) has
the following property:

\begin{definition}[\cite{Kwasniewski-Meyer:Essential}*{Definition 3.7}]
  A generalised expectation \(E\colon B\to\tilde{A}\) is
  \emph{symmetric} if \(E(b^* b)=0\) for some \(b\in B\) implies
  \(E(b b^*)=0\).
\end{definition}

A generalised expectation~\(E\) is symmetric if and only if the
largest two-sided ideal~\(\Null\) contained in \(\ker E\) is equal
to \(\setgiven{b\in B}{E(b^* b)=0}\), if and only if the induced
generalised expectation \(B/\Null \to \tilde{A}\) is faithful (see
\cite{Kwasniewski-Meyer:Essential}*{Proposition~3.6 and
  Corollary~3.8}).  The canonical pseudo-expectation on an inverse
semigroup crossed product is always symmetric by
\cite{Kwasniewski-Meyer:Essential}*{Theorem~4.11}.  This remains
true for actions of  étale groupoids
because these are treated by a reduction to inverse semigroup
actions.

We have now explained the meaning of Figure~\ref{fig:diagram}.
Next, we give references for the various implications that are
asserted there.

Proposition~\ref{pro:trivial_isotropy_to_unique_extension} implies
that \(A\subseteq B\) has the extension property if
\(\dual{A}\rtimes S\) is principal and that \(A\subseteq B\) has the
almost extension property if \(\dual{A}\rtimes S\) is topologically
principal.
Proposition~\ref{pro:unique_extension_to_trivial_isotropy} implies
the converse implications provided
\(A\rtimes S \onto B \onto A\rtimes_\red S\), that is, \(B\) sits
between the full and reduced crossed products.  Here we also use
Proposition~\ref{prop:weak_star_density} as in the proof of
Theorem~\ref{the:almost_extension_top_principal}.
Example~\ref{exa:extension_property_not_top_principal} shows that
both converse implications can fail if \(B=A\rtimes_\ess S\).

It is clear that the extension property implies the almost
extension property.  By
Theorem~\ref{the:almost_extension_aperiodic}, any inclusion with the
almost extension property is aperiodic, and the converse holds
if~\(B\) is separable.  It is clear that a principal groupoid is
topologically principal.  Topologically principal groupoids are
topologically free by
\cite{Kwasniewski-Meyer:Essential}*{Lemma~2.23}.  The converse
implication for countable~\(S\)  and \(A\) containing an essential ideal
which is separable or has a Hausdorff spectrum follows from
\cite{Kwasniewski-Meyer:Essential}*{Theorem 6.13 and Corollary 2.6}. 
%, see also the proof of \cite{Kwasniewski-Meyer:Essential}*{Theorem 6.14} and Remark \ref{rem:about_Baire_and_Hausdorff}.
Example~\ref{exa:top_free_vs_principal} shows that the converse
implication may fail for uncountable~\(S\).

Corollary~\ref{cor:top_free_vs_aperiodic} shows that topologically
free inverse semigroup actions are aperiodic.  The inclusion
\(A\subseteq B\) is aperiodic if and only if the action is aperiodic
(see \cite{Kwasniewski-Meyer:Essential}*{Proposition~6.3}, which is
copied here in Proposition~\ref{pro:aperiodic_isg_action}).
Theorem~\ref{thm:aperiodicity_implies_unique_expectation} shows that
an aperiodic inclusion has a unique pseudo-expectation.

If \(A\subseteq B\) is an aperiodic inclusion, then so is
\(A\subseteq C\) for any intermediate \(\Cst\)\nb-algebra
\(A\subseteq C\subseteq B/\Null\) because subbimodules of aperiodic
bimodules remain aperiodic (see
\cite{Kwasniewski-Meyer:Essential}*{Lemma~5.12}).  Therefore, we may
apply Theorem~\ref{thm:aperiodicity_implies_unique_expectation} to
the inclusion \(A\subseteq C\).  If the pseudo-expectation on~\(B\)
is symmetric, then the induced pseudo-expectation
\(B/\Null\to I(A)\) is faithful.  Therefore,
Theorem~\ref{thm:aperiodicity_implies_unique_expectation} implies
that~\(A\) supports~\(C\) for any \(A\subseteq C\subseteq B/\Null\).
It is easy to see that~\(A\) detects ideals in~\(C\) if it
supports~\(C\) (see
\cite{Kwasniewski-Meyer:Essential}*{Lemma~5.27}).  Assume that
\(A\subseteq B\) comes from an inverse semigroup (or étale groupoid)
action and that~\(A\) contains an essential ideal that is separable
or of Type~I.
Proposition~\ref{pro:intermediate_detection_to_top-free_isg} says
that the dual groupoid is topologically free if~\(A\) detects ideals
in all intermediate \(\Cst\)\nb-algebras
\(A\subseteq C\subseteq A\rtimes_\ess S\).

It is shown by Pitts and Zarikian that~\(A\) detects ideals in~\(C\)
for all \(A\subseteq C\subseteq B/\Null\) if and only if all
pseudo-expectations \(B/\Null \to I(A)\) are faithful (see
\cite{Pitts-Zarikian:Unique_pseudoexpectation}*{Theorem~3.5}).
If~\(E\) is symmetric, then the pseudo-expectation
\(B/\Null \to I(A)\) induced by~\(E\) is faithful.  If, in addition,
\(E\) is the only pseudo-expectation \(B\to I(A)\), then it follows
that~\(A\) detects ideals in~\(C\) for all
\(A\subseteq C\subseteq B/\Null\).

Restrict to actions of inverse semigroups once again.  If~\(A\)
detects ideals in all intermediate \(\Cst\)\nb-subalgebras
\(A\subseteq C\subseteq B/\Null\), then the action is purely outer
by Proposition~\ref{pro:intermediate_detection_to_purely_outer}.
By \cite{Kwasniewski-Meyer:Essential}*{Theorem~6.13}, actions that
are aperiodic or topologically free are purely outer, and the
converse holds if~\(A\) contains an essential ideal that is
separable or of Type~I.

This explains all the implications in Figure~\ref{fig:diagram}.

A very important fact is that there are some full cycles of
implications if~\(A\) contains an essential ideal that is separable
or simple or of Type~I.  Therefore, many of the conditions in
Figure~\ref{fig:diagram} become equivalent under suitable
assumptions.  We state two very similar theorems of this type.

\begin{theorem}
  \label{the:cycle_of_equivalences}
  Let~\(A\) be a \(\Cst\)\nb-algebra that contains an essential
  ideal that is separable or of Type~I.  Let \(A\subseteq B\) be the
  inclusion into an exotic crossed product for an action of an
  inverse semigroup~\(S\) or an étale groupoid~\(G\) on~\(A\).
  Assume that the inverse semigroup that acts is countable or that
  the étale groupoid that acts is covered by countably many
  bisections.  Let~\(B_\ess\) be the corresponding essential crossed
  product, which is a quotient of~\(B\).  The following are
  equivalent:
  \begin{itemize}
  \item the dual groupoid of the action is topologically principal;
  \item the dual groupoid of the action is topologically free;
  \item \(A\subseteq B\) has the almost extension property;
  \item \(A\subseteq B\) is aperiodic;
  \item \(A\subseteq B\) has a unique pseudo-expectation;
  \item \(A\) supports all intermediate \(\Cst\)\nb-algebras
    \(A\subseteq C\subseteq B_\ess\);
  \item \(A\) detects ideals in all intermediate
    \(\Cst\)\nb-algebras \(A\subseteq C\subseteq B_\ess\);
  \item \(A\) detects every essential crossed product
    \(A\rtimes_\ess T\) for an inverse subsemigroup \(T\subseteq S\)
    that contains all idempotent elements of~\(S\), or in every
    essential section \(\Cst\)\nb-algebra \(\Cst_\ess(H,\A)\) for an
    open subgroupoid \(H\subseteq G\) that contains the space of
    units of~\(G\).
  \end{itemize}
\end{theorem}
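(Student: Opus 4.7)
The plan is to close a cycle of implications among the eight listed conditions, using results already proved in the paper and a few from~\cite{Kwasniewski-Meyer:Essential}. I treat the inverse semigroup case; the étale groupoid case reduces to it by passing to the associated inverse semigroup action (Definition~\ref{def:groupoid_action_aperiodic}, Example~\ref{exa:transformation_groupoid_Fell}, and the isomorphisms~\eqref{eq:groupoid_isomorphisms}), which preserves both the dual groupoid and the essential crossed product.

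Write \(E\colon B\to I(A)\) for the canonical pseudo-expectation, i.e.\ the canonical generalised expectation into \(\Locmult(A)\) composed with \(\Locmult(A)\hookrightarrow I(A)\). I arrange the cycle: topologically principal \(\Rightarrow\) almost extension property \(\Rightarrow\) aperiodic \(\Rightarrow\) \emph{both} uniqueness of pseudo-expectation \emph{and} \(A\) supports every intermediate \(A\subseteq C\subseteq B_\ess\) \(\Rightarrow\) \(A\) detects ideals in every such~\(C\) \(\Rightarrow\) \(A\) detects ideals in every \(A\rtimes_\ess T\) of the stated form \(\Rightarrow\) topologically free \(\Rightarrow\) topologically principal. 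The first step is Theorem~\ref{the:almost_extension_top_principal}, the second is the easy half of Theorem~\ref{the:almost_extension_aperiodic} (separability is only needed for the converse), and aperiodic \(\Rightarrow\) unique pseudo-expectation is Theorem~\ref{thm:aperiodicity_implies_unique_expectation}. For the supporting property at each intermediate~\(C\), I would observe that aperiodicity of \(A\subseteq B\) descends to \(A\subseteq C\) because closed sub-bimodules of aperiodic bimodules remain aperiodic (\cite{Kwasniewski-Meyer:Essential}*{Lemma~5.12}); the unique pseudo-expectation provided by Theorem~\ref{thm:aperiodicity_implies_unique_expectation} must coincide with~\(E\), and~\(E\) is symmetric by \cite{Kwasniewski-Meyer:Essential}*{Theorem~4.11}, so~\(E\) induces a faithful pseudo-expectation on~\(B_\ess\), whose restriction to~\(C\) is therefore also faithful. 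This forces the largest ideal in~\(C\) on which the restricted pseudo-expectation vanishes to be zero, so Theorem~\ref{thm:aperiodicity_implies_unique_expectation}\ref{enu:aperiodic_consequences1} applied to \(A\subseteq C\) delivers that~\(A\) supports~\(C\). The step ``supports'' \(\Rightarrow\) ``detects ideals'' is \cite{Kwasniewski-Meyer:Essential}*{Lemma~5.27}, and weakening the class of intermediate algebras to crossed products is immediate via Lemma~\ref{lem:embed_crossed_for_isg}. Then Proposition~\ref{pro:intermediate_detection_to_top-free_isg} (or~\ref{pro:intermediate_detection_to_top-free_groupoid} in the groupoid case) yields topological freeness.

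The last step, topologically free \(\Rightarrow\) topologically principal, is the only one where the countability of~\(S\) and the essential ideal hypothesis on~\(A\) are truly used; here I would invoke \cite{Kwasniewski-Meyer:Essential}*{Theorem~6.13 and Corollary~2.6}, handling the Type~I case by first passing to an essential subideal with Hausdorff spectrum before applying that result. This is the main obstacle of the proof; every other implication is bookkeeping within the framework already set up. The one pitfall to watch for elsewhere is to fix, throughout the argument, that ``the'' pseudo-expectation used to define~\(\Null\) and~\(B_\ess\) is the canonical~\(E\); then symmetry of~\(E\), faithfulness of the induced expectation on~\(B_\ess\), and the identification of~\(\Null\) as \(\setgiven{b\in B}{E(b^*b)=0}\) are all consistent, and the cycle closes.
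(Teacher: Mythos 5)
Your route is essentially the paper's: you close the same cycle of implications from Figure~\ref{fig:diagram}, using the same ingredients (Theorem~\ref{the:almost_extension_top_principal} for topologically principal \(\Rightarrow\) almost extension property, Theorem~\ref{the:almost_extension_aperiodic}, Theorem~\ref{thm:aperiodicity_implies_unique_expectation} together with the symmetry of the canonical~\(E\) to get the supporting property for intermediate algebras, \cite{Kwasniewski-Meyer:Essential}*{Lemma~5.27}, Proposition~\ref{pro:intermediate_detection_to_top-free_isg}, and \cite{Kwasniewski-Meyer:Essential}*{Theorem~6.13 and Corollary~2.6} for the final step). Each implication you invoke is correct as stated, and your reduction of the groupoid case to the inverse semigroup case matches the paper.

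There is, however, one genuine gap: in your cycle the condition ``\(A\subseteq B\) has a unique pseudo-expectation'' appears only as the \emph{target} of an implication (aperiodicity implies it), never as a hypothesis. The arrow you draw out of the node ``uniqueness and supports'' is justified solely by ``supports \(\Rightarrow\) detects'' (\cite{Kwasniewski-Meyer:Essential}*{Lemma~5.27}), so uniqueness of the pseudo-expectation does no work there. Your argument therefore proves that all the other conditions imply uniqueness of the pseudo-expectation, but not the converse, and that item is not shown to be equivalent to the rest. The missing step is the one the paper takes from \cite{Pitts-Zarikian:Unique_pseudoexpectation}*{Theorem~3.5}: \(A\) detects ideals in all intermediate \(A\subseteq C\subseteq B_\ess\) if and only if every pseudo-expectation \(B_\ess\to I(A)\) is faithful. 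If~\(E\) is the \emph{only} pseudo-expectation \(B\to I(A)\), then every pseudo-expectation on~\(B_\ess\) pulls back along \(B\onto B_\ess\) to~\(E\), hence coincides with the map induced by~\(E\), which is faithful because~\(E\) is symmetric; so all pseudo-expectations on~\(B_\ess\) are faithful and detection in all intermediate algebras follows. Inserting the implication ``unique pseudo-expectation \(\Rightarrow\) \(A\) detects ideals in all \(A\subseteq C\subseteq B_\ess\)'' closes the cycle at that node, and the rest of your argument then goes through as written.
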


\begin{theorem}
  \label{the:cycle_of_equivalences_2}
  Let~\(A\) be a \(\Cst\)\nb-algebra that contains an essential
  ideal that is simple or of Type~I.  Let \(A\subseteq B\) be the
  inclusion into an exotic crossed product for an action of an
  inverse semigroup~\(S\) or an étale groupoid~\(G\) on~\(A\).
  Let~\(B_\ess\) be the corresponding essential crossed product,
  which is a quotient of~\(B\).  The following are equivalent:
  \begin{itemize}
  \item the action on~\(A\) is purely outer;
  \item \(A\subseteq B\) is aperiodic;
  \item \(A\subseteq B\) has a unique pseudo-expectation;
  \item \(A\) supports all intermediate \(\Cst\)\nb-algebras
    \(A\subseteq C\subseteq B_\ess\);
  \item \(A\) detects ideals in all intermediate
    \(\Cst\)\nb-algebras \(A\subseteq C\subseteq B_\ess\);
  \item \(A\) detects ideals in every essential crossed product
    \(A\rtimes_\ess T\) for an inverse subsemigroup \(T\subseteq S\)
    that contains all idempotent elements of~\(S\), or in every
    essential section \(\Cst\)\nb-algebra \(\Cst_\ess(H,\A)\) for an
    open subgroupoid \(H\subseteq G\) that contains the space of
    units of \(G\).
  \end{itemize}
\end{theorem}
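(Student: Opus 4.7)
The plan is to establish a cycle of implications among the listed conditions. The strategy closely mirrors that of Theorem~\ref{the:cycle_of_equivalences} with two differences: the topological and almost extension conditions are replaced by the single condition of pure outerness, and the hypothesis on~\(A\) is changed from ``separable or Type~I essential ideal'' to ``simple or Type~I essential ideal.'' Throughout, we use that the canonical pseudo-expectation \(E\colon B \to \Locmult(A) \hookrightarrow I(A)\) is symmetric by \cite{Kwasniewski-Meyer:Essential}*{Theorem~4.11}, so the induced map \(B_\ess = B/\Null \to I(A)\) is faithful.

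For the easy chain aperiodic \(\Rightarrow\) unique pseudo-expectation \(\Rightarrow\) \(A\) supports intermediate algebras \(\Rightarrow\) \(A\) detects ideals in intermediate algebras \(\Rightarrow\) \(A\) detects ideals in the special intermediate algebras, I would apply Theorem~\ref{thm:aperiodicity_implies_unique_expectation} to each intermediate inclusion \(A \subseteq C \subseteq B_\ess\). Aperiodicity is inherited by sub-bimodules by \cite{Kwasniewski-Meyer:Essential}*{Lemma~5.12}. Uniqueness forces the pseudo-expectation \(E_C\) on~\(C\) to equal \(E|_C\), which is faithful on \(C\subseteq B_\ess\); hence the largest ideal \(\Null_C\) contained in \(\ker E_C\) is zero, and Theorem~\ref{thm:aperiodicity_implies_unique_expectation}\ref{enu:aperiodic_consequences1} gives that~\(A\) supports~\(C\). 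Supporting implies detection of ideals by \cite{Kwasniewski-Meyer:Essential}*{Lemma~5.27}. Detection in all intermediate algebras trivially specialises to detection in the subalgebras \(A\rtimes_\ess T\) or \(\Cst_\ess(H,\A)\), and Proposition~\ref{pro:intermediate_detection_to_purely_outer} (together with the groupoid analogue mentioned at the end of Section~\ref{sec:detection_in_intermediate}) then yields pure outerness of the action.

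The main obstacle is closing the cycle via the implication purely outer \(\Rightarrow\) aperiodic under the hypothesis on~\(A\). For an essential Type~I ideal, this is \cite{Kwasniewski-Meyer:Essential}*{Theorem~6.13}. The essential simple case is new and uses Kishimoto's Theorem. By Proposition~\ref{pro:aperiodic_isg_action}, it suffices to prove that a purely outer Hilbert \(A\)\nb-bimodule~\(\Hilm\) of the form \(\Hilm_t\cdot I_{1,t}^\bot\) is aperiodic. Let~\(J_0\) be the essential simple ideal of~\(A\). The sub-bimodule \(J_0\cdot \Hilm\cdot J_0\) is a Hilbert \(J_0\)\nb-bimodule, and over the simple \(\Cst\)\nb-algebra~\(J_0\) such a bimodule is either zero or an equivalence bimodule isomorphic to~\((J_0)_\alpha\) for some \(\alpha\in\Aut(J_0)\). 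Pure outerness of~\(\Hilm\) forces~\(\alpha\) to be an outer automorphism of~\(J_0\), and Kishimoto's Theorem from~\cite{Kishimoto:Outer_crossed} (recalled in Example~\ref{exa:Kishimoto_automorphism}) then gives that~\(\alpha\) is properly outer, i.e.\ \((J_0)_\alpha\) is aperiodic. Essentiality of~\(J_0\) means every non-zero hereditary \(\Cst\)\nb-subalgebra of~\(A\) meets~\(J_0\) non-trivially, so Kishimoto's condition for~\(\Hilm\) is implied by Kishimoto's condition for \(J_0\cdot \Hilm\cdot J_0\). Actions of étale groupoids reduce to inverse semigroup actions via Definition~\ref{def:groupoid_action_aperiodic} and Example~\ref{exa:transformation_groupoid_Fell} without affecting pure outerness or aperiodicity, so the same argument applies in the groupoid setting.
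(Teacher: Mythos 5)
Your overall architecture is the same as the paper's: the cycle aperiodic \(\Rightarrow\) unique pseudo-expectation \(\Rightarrow\) \(A\) supports intermediate algebras \(\Rightarrow\) \(A\) detects ideals \(\Rightarrow\) detection in the special subalgebras \(A\rtimes_\ess T\) (resp.\ \(\Cst_\ess(H,\A)\)) \(\Rightarrow\) purely outer \(\Rightarrow\) aperiodic, with the forward part read off from Figure~\ref{fig:diagram} using that the canonical pseudo-expectation is symmetric, Theorem~\ref{thm:aperiodicity_implies_unique_expectation} applied to each intermediate inclusion, \cite{Kwasniewski-Meyer:Essential}*{Lemma~5.27}, and Proposition~\ref{pro:intermediate_detection_to_purely_outer} together with its groupoid analogue. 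That part of your proposal is correct and matches the paper.

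The gap is in your argument for the closing implication ``purely outer \(\Rightarrow\) aperiodic'' when the essential ideal \(J_0\) is simple. You assert that a non-zero Hilbert \(J_0\)\nb-bimodule over a simple \(\Cst\)\nb-algebra is an equivalence bimodule isomorphic to \((J_0)_\alpha\) for some \(\alpha\in\Aut(J_0)\). The first half is right (simplicity forces the source and range ideals to be all of \(J_0\)), but the second half is false: the Picard group of a simple \(\Cst\)\nb-algebra is in general strictly larger than its outer automorphism group. For example, if \(\theta\) is a quadratic irrational fixed by a hyperbolic element of \(SL_2(\Z)\), the Rieffel--Connes construction produces \(A_\theta\)-\(A_\theta\) imprimitivity bimodules that rescale the trace on \(K_0\) by a factor \(\neq 1\) and therefore cannot be of the form \((A_\theta)_\alpha\). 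Such a bimodule can perfectly well occur as \(\Hilm_t\cdot I_{1,t}^\bot\) (take the Fell bundle over \(\Z\) it generates), it is purely outer because \(A_\theta\) is simple and the bimodule is not isomorphic to \(A_\theta\), yet your reduction to Kishimoto's theorem for automorphisms says nothing about it. So your argument fails exactly on the bimodules for which aperiodicity is non-trivial. The paper does not attempt this reduction; it invokes the Hilbert-bimodule generalisation of Kishimoto's theorem from \cite{Kwasniewski-Meyer:Aperiodicity} (the equivalence of conditions (9.12.1)--(9.12.6) and (9.12.11) of Theorem~9.12 there, as recorded in the remark following Theorem~\ref{the:cycle_of_equivalences_2}), which is proved by redoing Kishimoto's estimates directly for bimodules. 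Your treatment of the Type~I case via \cite{Kwasniewski-Meyer:Essential}*{Theorem~6.13}, and the passage from hereditary subalgebras of \(A\) to those of the essential ideal \(J_0\), are fine.
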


\begin{proof}
  Both theorems follow mostly from the implications in
  Figure~\ref{fig:diagram}.  Under our assumptions, \(E\) is
  symmetric.  Propositions
  \ref{pro:intermediate_detection_to_top-free_isg} and
  \ref{pro:intermediate_detection_to_purely_outer} give topological
  freeness and pure outerness of an inverse semigroup action
  when~\(A\) detects ideals in intermediate \(\Cst\)\nb-algebras of
  the form \(A\rtimes_\ess T\).  The étale groupoid version of
  Proposition~\ref{pro:intermediate_detection_to_top-free_isg} is
  Proposition~\ref{pro:intermediate_detection_to_top-free_groupoid},
  and the étale groupoid version of
  Proposition~\ref{pro:intermediate_detection_to_purely_outer} is
  similar and left to the reader.
\end{proof}

\begin{remark}
  A mistake has crept into the hypotheses of
  \cite{Kwasniewski-Meyer:Aperiodicity}*{Theorem~9.12}.  Namely,
  if~\(A\) contains an essential ideal that is simple, then it is
  unclear whether aperiodicity implies topological freeness (see
  also Remark~\ref{rem:aperiodic_vs_top_free}).  We only know that
  conditions (9.12.1)--(9.12.6) and (9.12.11) in
  \cite{Kwasniewski-Meyer:Aperiodicity}*{Theorem~9.12} are
  equivalent.  Some of these equivalences are shown in
  Theorem~\ref{the:cycle_of_equivalences_2} in greater generality.
\end{remark}

\begin{remark}
  Zarikian proved in
  \cite{Zarikian:Unique_expectations}*{Theorem~3.5} that the
  inclusion \(A\subseteq A\rtimes_\red G\) for an action of a
  discrete group~\(G\) is aperiodic if and only if it has a unique
  pseudo-expectation.  One direction in this implication is proven
  in Theorem~\ref{thm:aperiodicity_implies_unique_expectation}.  We
  do not know whether, conversely, any inclusion with a unique
  pseudo-expectation is aperiodic.  It seems likely that this is
  true for inclusions into crossed products for inverse semigroup
  actions.  Under separability assumptions, this is contained in
  Theorem~\ref{the:cycle_of_equivalences} (see also
  Proposition~\ref{pro:nc_Cartan} below).
\end{remark}

\begin{remark}
  Kennedy and Schafhauser introduced
  in~\cite{Kennedy-Schafhauser:noncomm_boundaries} a cohomological
  invariant for discrete amenable group actions by automorphisms
  whose vanishing implies that aperiodicity (proper outerness) and
  detection of ideals are equivalent.  Using our results, we see
  that this invariant detects whether detection of ideals already
  implies detection of ideals in all intermediate
  \(\Cst\)\nb-subalgebras.
\end{remark}

Many results in the \(\Cst\)\nb-algebra literature have been proven
under one of the assumptions in Figure~\ref{fig:diagram}.  The
implications proven here often allow to strengthen the conclusions
of such results or weaken assumptions.  As an example, we discuss a
classical result of Archbold and
Spielberg~\cite{Archbold-Spielberg:Topologically_free}.  It says
that the inclusion \(A \subseteq A\rtimes_\red G\) for an action of
a discrete group~\(G\) detects ideals if the dual groupoid satisfies
a condition that is between topological freeness and topological
principality, called \emph{AS topologically free}
in~\cite{Kwasniewski-Meyer:Essential} (see also
Remark~\ref{rem:Archbold_Spielberg}).  According to
Figure~\ref{fig:diagram}, we now get the same conclusion whenever
the action is topologically free or aperiodic.  In fact, we get the
stronger statement that~\(A\) supports the reduced crossed product,
which may help to prove that the reduced crossed product is purely
infinite (see
Theorem~\ref{thm:aperiodicity_implies_unique_expectation}).

In our recent paper~\cite{Kwasniewski-Meyer:Essential}, we did not
yet know Theorem~\ref{the:top_free_vs_aperiodic} and therefore
proved results about detection of ideals both for aperiodic and AS
topologically free actions.  Now we see that there is no need for a
separate treatment for (AS) topologically free actions.  So
\cite{Kwasniewski-Meyer:Essential}*{Theorem~6.14} is no longer
needed.

Two implications in Figure~\ref{fig:diagram} only work if
\(E\colon B\to I(A)\) is symmetric.  We can still get similar
statements without this assumption.  These, however, depend
explicitly on the pseudo-expectation~\(E\) and not only on the
ideal~\(\Null\) defined by it:

\begin{lemma}
  Let \(A\subseteq C\subseteq B\) be an intermediate
  \(\Cst\)\nb-algebra.  Let~\(\Null_{E|_C}\) be the largest
  two-sided ideal in~\(C\) contained in \(\ker E\).  If
  \(A\subseteq B\) is aperiodic, then~\(A\) supports
  \(C/\Null_{E|_C}\).  If \(A\subseteq B\) has a unique
  pseudo-expectation, then~\(A\) detects ideals
  in~\(C/\Null_{E|_C}\).
\end{lemma}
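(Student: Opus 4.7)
The plan is to pass each hypothesis from the inclusion $A\subseteq B$ down to the intermediate inclusion $A\subseteq C$ and then apply results already established in this section. For the first statement, observe that if $B/A$ is aperiodic as an $A$\nb-bimodule, then so is the closed $A$\nb-subbimodule $C/A$ by \cite{Kwasniewski-Meyer:Essential}*{Lemma~5.12}, so $A\subseteq C$ is an aperiodic inclusion. Theorem~\ref{thm:aperiodicity_implies_unique_expectation} applied to $A\subseteq C$ then produces a unique pseudo-expectation $E'\colon C\to I(A)$. Since $E|_C$ is itself such a pseudo-expectation, $E|_C=E'$, and the distinguished ideal appearing in the conclusion of that theorem is precisely $\Null_{E|_C}$. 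Part~\ref{enu:aperiodic_consequences1} of the theorem then yields that $A$ supports $C/\Null_{E|_C}$.

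For the second statement, I first want to show that uniqueness of the pseudo-expectation is inherited by the intermediate inclusion. Given any pseudo-expectation $F\colon C\to I(A)$, injectivity of $I(A)$ lets one extend $F$ to a completely positive contractive map $\tilde F\colon B\to I(A)$, which is then a pseudo-expectation for $A\subseteq B$; by the hypothesis $\tilde F=E$, so $F=E|_C$. A parallel argument with the quotient map $C\onto C/\Null_{E|_C}$ in place of the inclusion $C\hookrightarrow B$ shows that the induced map $\bar E\colon C/\Null_{E|_C}\to I(A)$ is the unique pseudo-expectation for $A\subseteq C/\Null_{E|_C}$. By construction of $\Null_{E|_C}$ as the largest ideal in $\ker E|_C$, the largest two-sided ideal in $\ker\bar E$ is~$0$.

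The final step is the standard Pitts--Zarikian device: given a non-zero ideal $J\subseteq C/\Null_{E|_C}$ with $J\cap A=0$, the composite \(\Cst\)\nb-inclusion $A\hookrightarrow (C/\Null_{E|_C})/J$ extends via injectivity of $I(A)$ to a completely positive contractive map $\psi\colon (C/\Null_{E|_C})/J\to I(A)$, and composition with the quotient map produces a second pseudo-expectation on $C/\Null_{E|_C}$ that vanishes on~$J$. By the uniqueness just established this composite must equal $\bar E$, hence $J\subseteq\ker\bar E$; as $J$ is an ideal, it is contained in the largest ideal of $\ker\bar E$, which is zero, contradicting $J\neq 0$.

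I do not foresee a serious obstacle: the argument is a clean composition of Theorem~\ref{thm:aperiodicity_implies_unique_expectation} applied to the intermediate inclusion, propagation of uniqueness through restriction and quotient via injectivity of $I(A)$, and the standard trick of manufacturing a pseudo-expectation annihilating an ideal with zero intersection with~$A$. The only point to be watchful about is the order: one must first establish uniqueness for $A\subseteq C$ and then deduce it for $A\subseteq C/\Null_{E|_C}$ by a second lifting step, because the Pitts--Zarikian argument explicitly needs uniqueness after the quotient.
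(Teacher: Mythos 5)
Your proof is correct and follows essentially the same route as the paper: both arguments pass aperiodicity, respectively uniqueness of the pseudo-expectation, down to the intermediate inclusion \(A\subseteq C\) and then invoke Theorem~\ref{thm:aperiodicity_implies_unique_expectation} for the supporting statement. For the detection statement the paper simply cites \cite{Pitts-Zarikian:Unique_pseudoexpectation}*{Proposition~3.1}, whereas you reprove that result via the standard injectivity argument (manufacturing a second pseudo-expectation that kills the ideal); the content is the same.
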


\begin{proof}
  If \(A\subseteq B\) is aperiodic or has a unique
  pseudo-expectation, then the same is true for the inclusion
  \(A\subseteq C\).
  Theorem~\ref{thm:aperiodicity_implies_unique_expectation} applied
  to this inclusion shows that~\(A\) supports \(C/\Null_{E|_C}\) if
  \(A\subseteq C\) is aperiodic.  And
  \cite{Pitts-Zarikian:Unique_pseudoexpectation}*{Proposition~3.1}
  shows that~\(A\) detects ideals in~\(C/\Null_{E|_C}\) if
  \(A\subseteq C\) has a unique pseudo-expectation.
\end{proof}

\begin{lemma}
  If~\(E\) is not symmetric, then there is an intermediate
  \(\Cst\)\nb-algebra~\(C\) for which
  \(\Null_{E|_C}\neq\Null_E \cap C\).
\end{lemma}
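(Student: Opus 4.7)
The plan is to locate a positive element in \(\ker E\setminus \Null_E\) and wrap it in a carefully chosen intermediate \(\Cst\)\nb-algebra so that it generates a new ideal lying inside \(\ker E\).  Since~\(E\) is not symmetric, the left kernel
\(L\defeq \setgiven{b\in B}{E(b^*b)=0}\)
is a closed left ideal that strictly contains \(\Null_E\): otherwise \(L=L^*\) and~\(E\) would be symmetric.  We pick \(b\in L\setminus \Null_E\) and set \(M\defeq L\cap L^*\).  Using the Schwarz-type bound \(\norm{E(y^*zy)}\le \norm{z}\cdot\norm{E(y^*y)}\) for the c.p.\ contraction~\(E\) (obtained from a Stinespring dilation into a faithful representation of~\(I(A)\)), we see that \(E\) vanishes on~\(M\), that \(M\) is a \(\Cst\)\nb-subalgebra of~\(B\), and that \(b^*b\in M\) because \(E((b^*b)^2)=E(b^*(bb^*)b)=0\).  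Moreover, \(b^*b\notin \Null_E\): any closed two-sided ideal containing \(b^*b\) would contain \((b^*b)^{1/2}\) by functional calculus and hence~\(b\) itself via polar decomposition, contradicting our choice of~\(b\).

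The key algebraic step is the absorption property \(MAM\subseteq M\).  For \(m_1,m_2\in M\) and \(a\in A\), two applications of the Schwarz bound give
\[
  \norm{E\bigl(m_2^*(a^*m_1^*m_1a)m_2\bigr)}
    \le \norm{a^*m_1^*m_1a}\cdot \norm{E(m_2^*m_2)}=0
\]
and, symmetrically, \(\norm{E\bigl(m_1(am_2m_2^*a^*)m_1^*\bigr)}\le \norm{am_2m_2^*a^*}\cdot \norm{E(m_1m_1^*)}=0\), so \(m_1am_2\in L\cap L^*=M\).

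Now we take \(C\defeq \Cst(A\cup M)\subseteq B\) and consider the closed two-sided ideal \(J\defeq \overline{CMC}\) of~\(C\).  Every product of elements from \(A\cup M\) that contains at least one factor from~\(M\) can, after combining consecutive factors of the same type and iterating \(MAM\subseteq M\), be rewritten as a single element of \(AMA\cup AM\cup MA\cup M\).  Since~\(E\) is \(A\)\nb-bimodular by \cite{Kwasniewski-Meyer:Essential}*{Lemma~3.2} and \(E(M)=0\), it vanishes on each of these four spaces and, by linearity and continuity, on all of~\(J\).  Hence \(J\subseteq \Null_{E|_C}\), while \(b^*b\in M\subseteq J\) but \(b^*b\notin \Null_E\), so \(b^*b\) lies in \(\Null_{E|_C}\) yet not in \(\Null_E\cap C\), as required.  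The trickiest point is making the word-compression argument fully rigorous, which we would handle by a density argument using approximate identities of~\(A\) to treat words that begin or end with a factor from~\(M\); once that is in place, the conclusion follows cleanly from \(A\)\nb-bimodularity.
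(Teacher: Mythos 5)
Your proof is correct, and it rests on the same basic mechanism as the paper's: a positive element that lies in the left kernel of \(E\) but not in \(\Null_E\) generates, inside an intermediate \(\Cst\)\nb-algebra obtained by adjoining it to \(A\), a two-sided ideal annihilated by \(E\). The execution, however, is genuinely different. The paper first passes to \(B/\Null_E\), where non-symmetry says exactly that the induced pseudo-expectation is almost faithful but not faithful, picks a single \(x\neq0\) with \(E(x^*x)=0\), takes \(C=\Cst(A,x^*x)\), and outsources the key vanishing computation to Pitts--Zarikian. You stay upstairs in \(B\), replace the single element by the \(\Cst\)\nb-subalgebra \(M=L\cap L^*\), and prove the absorption \(MAM\subseteq M\) (your argument in fact gives \(MBM\subseteq M\)) directly from the Schwarz inequality; this makes the proof self-contained and the word-compression in \(\Cst(A\cup M)\) completely transparent, at the cost of a slightly heavier setup. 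Two points of hygiene: to get \(E(M)=0\) you need the Cauchy--Schwarz form \(E(m)^*E(m)\le E(m^*m)\) (via the unitisation of \(E\)), not the three-factor bound you state, which only controls \(E(y^*zy)\); and the closing worry about words beginning or ending with a factor from \(M\) is a non-issue, since \(E\) kills \(M\), \(AM\), \(MA\) and \(AMA\) outright by \(A\)\nb-bimodularity together with \(E(M)=0\), while the containment \(M\subseteq\overline{CMC}\) follows from an approximate identity of \(M\) (not of \(A\)), e.g.\ from \(\overline{M^3}=M\). With those touch-ups the argument is complete and gives exactly \(b^*b\in\Null_{E|_C}\setminus(\Null_E\cap C)\).
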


\begin{proof}
  We are going to construct~\(C\) as the pre-image of an
  intermediate \(\Cst\)\nb-algebra \(A\subseteq C\subseteq B/\Null\)
  such that the pseudo-expectation \(C \to I(A)\) induced by~\(E\)
  is not almost faithful.  The induced pseudo-expectation
  on~\(B/\Null\) is almost faithful, but not faithful.  Then there
  is \(x\in B/\Null\) with \(x\neq 0\), but \(E(x^* x)=0\).  Let
  \(C\) be the \(\Cst\)\nb-subalgebra generated by~\(A\)
  and~\(x^* x\).  It is easy to see that~\(E\) vanishes on the
  two-sided ideal in~\(C\) generated by~\(x^* x\) in~\(C\) (compare
  the proof of
  \cite{Pitts-Zarikian:Unique_pseudoexpectation}*{Theorem~3.5}).
  Therefore, \(\Null_{E|_C}\neq0\).
\end{proof}

\begin{example}
  Let \(B=\Mat_2(\C)\) and let
  \(A \defeq \C\cdot E_{11}\subseteq B\).  It is well known that
  states on hereditary \(\Cst\)\nb-subalgebras extend uniquely (see
  \cite{Pedersen:Cstar_automorphisms}*{Proposition~3.1.6}).
  Since~\(A\) is a hereditary subalgebra in~\(B\), the inclusion
  \(A\subseteq B\) has the extension property.  This implies that it
  is aperiodic and that it has a unique pseudo-expectation.  The
  latter is the obvious expectation \(E\colon B\to A\),
  \((T_{ij})_{1\le i,j\le 2}\mapsto T_{11}\).  This expectation is
  almost faithful because~\(B\) is simple, but not faithful.  So
  \(\Null=0\).  It follows that~\(A\) supports~\(B\).  Now let
  \(C \subseteq B\) be the \(\Cst\)\nb-subalgebra of diagonal
  matrices.  Then~\(E|_C\) is not faithful.  And \(A\subseteq C\)
  does not detect ideals, so it cannot support~\(C\) either.
\end{example}

\section{Applications to  Cartan subalgebras and
  Kumjian's diagonals}
\label{sec:Cartan}

We end this article with two applications of our results to Cartan
subalgebras of some kind.  Since this is the only place where we use
regular inclusions, normalisers, and twists of groupoids, we do not
define these concepts here.
Our regular inclusions are non-degenerate by definition.  First we apply our characterisation of
the extension property to Kumjian's \(\Cst\)\nb-diagonals.  Kumjian
noted that his \(\Cst\)\nb-diagonals have the extension property
(see~\cite{Kumjian:Diagonals}*{Proposition~1.4 and Theorem~3.1}).
The following proposition removes the separability assumption from
this result and shows that the extension property characterises
Kumjian's diagonals.

\begin{proposition}
  \label{pro:diagonal}
  Let \(\Cont_0(X)=A\subseteq B\) be a regular commutative
  \(\Cst\)\nb-subalgebra with a faithful conditional expectation
  \(E\colon B\to A\).  The following are equivalent:
  \begin{enumerate}
  \item \label{pro:diagonal1}%
    \(A\) is a \(\Cst\)\nb-diagonal in~\(B\), that is, normalisers
    \(b\in B\) of~\(A\) with \(b^2=0\) are linearly dense in
    \(\ker E\);
  \item \label{pro:diagonal2}%
    \(A\subseteq B\) has the extension property;
  \item \label{pro:diagonal4}%
    there is a twist~\(\Sigma\) of a principal, Hausdorff, étale
    groupoid~\(H\) with the unit space~\(X\) and an isomorphism
    \(B\cong \Cst_\red(H,\Sigma)\) which is the identity on
    \(A=\Cont_0(X)\).
  \end{enumerate}
  The twisted groupoid \((H,\Sigma)\) in~\ref{pro:diagonal4} is
  unique up to isomorphism.
\end{proposition}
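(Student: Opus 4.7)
The plan is to establish the equivalences (1) $\Leftrightarrow$ (3) and (3) $\Leftrightarrow$ (2); uniqueness of $(H,\Sigma)$ will then follow because both $H$ and $\Sigma$ are recovered as canonical invariants of the triple $(A,B,E)$.

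For (1) $\Rightarrow$ (3), reproduce Kumjian's Weyl twist construction from \cite{Kumjian:Diagonals}*{Theorem~3.1}; a careful reading shows that the separability of $B$ is not used in any essential way, so the construction associates to a $\Cst$\nb-diagonal a principal, Hausdorff, étale groupoid $H$ with unit space $X=\hat A$ and a twist $\Sigma$ such that $B \cong \Cst_\red(H,\Sigma)$. For (3) $\Rightarrow$ (1), note that in any principal Hausdorff étale groupoid $H$, every $g\in H\setminus X$ has $r(g)\neq s(g)$, so by Hausdorffness of $X$ we can find an open bisection $V\ni g$ with $s(V)\cap r(V)=\emptyset$. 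Sections of $\Sigma$ supported in such a $V$ are normalisers $n$ of $A$ with $n^2=0$ (no two elements of $V$ are composable in $H$), and a partition-of-unity argument for the cover of $H\setminus X$ by such bisections shows that these square-zero normalisers are linearly dense in $\Contc(H\setminus X,\Sigma)$, hence in $\ker E = \Cst_\red(H\setminus X,\Sigma)$.

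For (3) $\Rightarrow$ (2) apply Corollary~\ref{cor:extension_prop_Fell_groupoid}.(2): since $A=\Cont_0(X)$ is commutative with spectrum $X$, the dual groupoid associated to the twist $(H,\Sigma)$ is naturally isomorphic to $H$ itself, which is principal by hypothesis. For (2) $\Rightarrow$ (3), regularity of the inclusion yields an inverse semigroup $S$ of slices in $B$ acting on $A$ by Hilbert bimodules, such that $B$ becomes an exotic crossed product for this action. Faithfulness of the genuine conditional expectation $E$ identifies $B$ with $A\rtimes_\red S$ and forces the dual groupoid $\hat A\rtimes S$ to be Hausdorff étale with unit space $X$ (the unit space is clopen in the dual groupoid precisely because $E$ takes values in $A$ rather than merely in $\Locmult(A)$). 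Theorem~\ref{the:almost_extension_top_principal}.(3) then translates the extension property into principality of $\hat A\rtimes S$, so that setting $H\defeq \hat A\rtimes S$ and taking $\Sigma$ to be the canonical $\T$\nb-bundle over $H$ arising from the slice inverse semigroup yields the data in (3).

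The main obstacle will be reconciling the inverse-semigroup-of-slices picture underpinning Theorem~\ref{the:almost_extension_top_principal} with Renault's Weyl groupoid \cite{Renault:Cartan.Subalgebras} for a regular commutative inclusion with faithful conditional expectation, and then verifying that genuine (rather than generalised) conditional expectations translate into Hausdorffness of the resulting Weyl groupoid. Uniqueness of $(H,\Sigma)$ up to isomorphism is then a routine consequence of both constructions being canonically determined by $(A,B,E)$.
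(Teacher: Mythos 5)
Your cycle of implications is organised differently from the paper's ((1)\(\Rightarrow\)(2)\(\Rightarrow\)(3)\(\Rightarrow\)(1)), and two of your steps have genuine gaps. First, for (1)\(\Rightarrow\)(3) you propose to ``reproduce Kumjian's Weyl twist construction'' and assert that separability of \(B\) is not used in any essential way. That assertion is not a proof, and it is precisely the difficulty this proposition is designed to circumvent: the whole point, stated just before the proposition, is to remove the separability hypothesis from Kumjian's theorem, and Remark~\ref{rem:about_Baire_and_Hausdorff} records that a closely related argument of Renault has a gap. The paper instead uses only the elementary implication (1)\(\Rightarrow\)(2) (Kumjian's Proposition~1.4, which needs no separability) and then deduces (3) from (2).

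Second, your (2)\(\Rightarrow\)(3) leaves the real work unproven. You assert that faithfulness of the genuine conditional expectation identifies \(B\) with \(A\rtimes_\red S\) for the slice inverse semigroup and ``forces'' the dual groupoid to be Hausdorff \'etale, and only then invoke Theorem~\ref{the:almost_extension_top_principal} to get principality. Neither assertion is justified: dual groupoids of inverse semigroup actions are non-Hausdorff in general (cf.\ Example~\ref{exa:extension_property_not_top_principal}), and reconstructing \(B\) as \(\Cst_\red(H,\Sigma)\) for a twist over the Weyl groupoid is exactly the content of \cite{Kwasniewski-Meyer:Cartan}*{Corollary~7.6}, whose hypothesis is aperiodicity of the inclusion. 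The paper's route is: the extension property implies aperiodicity by Theorem~\ref{the:almost_extension_aperiodic} (a statement about arbitrary \(\Cst\)\nb-inclusions, requiring no crossed-product structure); then \cite{Kwasniewski-Meyer:Cartan}*{Corollary~7.6} delivers the Hausdorff \'etale groupoid \(H\), the twist \(\Sigma\), the isomorphism \(B\cong\Cst_\red(H,\Sigma)\), \emph{and} the uniqueness of \((H,\Sigma)\) in one stroke; finally Corollary~\ref{cor:extension_prop_Fell_groupoid} upgrades \(H\) to being principal. Your proposal never uses Theorem~\ref{the:almost_extension_aperiodic}, which is the key new input. Your (3)\(\Rightarrow\)(1) via square-zero normalisers supported on bisections \(U\) with \(s(U)\cap r(U)=\emptyset\) does match the paper's argument, and your direct (3)\(\Rightarrow\)(2) via Corollary~\ref{cor:extension_prop_Fell_groupoid} is a correct (if redundant) alternative to the paper's (3)\(\Rightarrow\)(1)\(\Rightarrow\)(2). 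The uniqueness claim should also be attributed to \cite{Kwasniewski-Meyer:Cartan}*{Corollary~7.6} rather than declared routine.
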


\begin{proof}
  By~\cite{Kumjian:Diagonals}*{Proposition~1.4}, \ref{pro:diagonal1}
  implies~\ref{pro:diagonal2}.  Assume \ref{pro:diagonal2}.  Then
  the inclusion \(A\subseteq B\) is aperiodic by
  Theorem~\ref{the:almost_extension_aperiodic}.  Then
  \cite{Kwasniewski-Meyer:Cartan}*{Corollary~7.6} gives an
  isomorphism \(B\cong \Cst_\red(H,\Sigma)\) for a unique twisted
  groupoid \((H,\Sigma)\), where~\(H\) is a Hausdorff, étale,
  locally compact groupoid with the unit space~\(X\).  Here~\(H\) is
  the dual groupoid of the inclusion.  Since
  \(\Cont_0(X)\subseteq \Cst_\red(H,\Sigma)\) has the extension
  property, \(H\) is also principal by
  Corollary~\ref{cor:extension_prop_Fell_groupoid}.
  Thus~\ref{pro:diagonal2} implies~\ref{pro:diagonal4}.  It remains
  to show that~\ref{pro:diagonal4} implies~\ref{pro:diagonal1} (as
  proven already in \cite{Kumjian:Diagonals}*{Lemma~2.12}).
  Since~\(H\) is principal, \(H \setminus X\) is covered by
  bisections \(U\subseteq H\) with the property that
  \(s(U) \cap r(U) = \emptyset\).  If \(b\in \Cst_\red(H,\Sigma)\)
  is supported in such a bisection, then it is a normaliser for
  \(\Cont_0(X)\) with \(b^2=0\).  Such elements are linearly dense
  in \(\ker E\).
\end{proof}

Next we apply our theory to Exel's noncommutative Cartan
subalgebras, which we have recently characterised
in~\cite{Kwasniewski-Meyer:Cartan}.  A noncommutative Cartan
inclusion is a regular \(\Cst\)\nb-inclusion \(A\subseteq B\) with a
faithful conditional expectation \(E\colon B\to A\) and an extra
property, for which many equivalent forms are given in
\cite{Kwasniewski-Meyer:Cartan}*{Theorem~4.2}.  One of them says
that \(B\cong A\rtimes_\red S\) for a closed and purely outer
action~\(\Hilm\) of an inverse semigroup~\(S\) on~\(A\), with an
isomorphism that restricts to the canonical embedding on~\(A\).  It
is already noted in \cite{Kwasniewski-Meyer:Cartan}*{Theorem~6.3}
that aperiodicity implies the equivalent conditions in
\cite{Kwasniewski-Meyer:Cartan}*{Theorem~4.2}.  Here we observe that
the unique pseudo-expectation property also does that.  This
characterises Renault's (commutative) Cartan inclusions in a new way
(see~\cite{Renault:Cartan.Subalgebras}).  It also characterises
Exel's Cartan subalgebras \(A\subseteq B\), provided~\(A\) has an
essential ideal that is simple or of Type~I.

\begin{proposition}
  \label{pro:nc_Cartan}
  Let \(A\subseteq B\) be a regular \(\Cst\)\nb-inclusion with a
  faithful conditional expectation \(E\colon B\to A\).  Consider the
  following conditions:
  \begin{enumerate}
  \item \label{enu:noncommutative_Cartans0}%
    \(A\subseteq B\) has the almost extension property;
  \item \label{enu:noncommutative_Cartans1}%
    \(A\subseteq B\) is aperiodic;
  \item \label{enu:noncommutative_Cartans2}%
    \(A\subseteq B\) has a unique pseudo-expectation;
  \item \label{enu:noncommutative_Cartans3}%
    \(A\subseteq B\) is a noncommutative Cartan subalgebra in the
    sense of Exel~\cite{Exel:noncomm.cartan};
  \end{enumerate}
  Then \ref{enu:noncommutative_Cartans0}\(\Rightarrow\)%
  \ref{enu:noncommutative_Cartans1}\(\Rightarrow\)%
  \ref{enu:noncommutative_Cartans2}\(\Rightarrow\)%
  \ref{enu:noncommutative_Cartans3}.  If~\(A\) contains an essential
  ideal that is simple or of Type~I, then
  \ref{enu:noncommutative_Cartans1}--\ref{enu:noncommutative_Cartans3}
  are equivalent.  If, in addition, \(B\) is separable, then all the
  conditions
  \ref{enu:noncommutative_Cartans0}--\ref{enu:noncommutative_Cartans3}
  are equivalent.
\end{proposition}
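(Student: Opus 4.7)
The plan is to cycle \(\ref{enu:noncommutative_Cartans0}\Rightarrow\ref{enu:noncommutative_Cartans1}\Rightarrow\ref{enu:noncommutative_Cartans2}\Rightarrow\ref{enu:noncommutative_Cartans3}\) unconditionally, then close via \(\ref{enu:noncommutative_Cartans3}\Rightarrow\ref{enu:noncommutative_Cartans1}\) under the simple-or-Type~I hypothesis on an essential ideal of~\(A\), and finally via \(\ref{enu:noncommutative_Cartans1}\Rightarrow\ref{enu:noncommutative_Cartans0}\) when~\(B\) is separable.

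The first two implications are direct citations: \ref{enu:noncommutative_Cartans0} \(\Rightarrow\) \ref{enu:noncommutative_Cartans1} is Theorem~\ref{the:almost_extension_aperiodic}, and \ref{enu:noncommutative_Cartans1} \(\Rightarrow\) \ref{enu:noncommutative_Cartans2} is the uniqueness statement in Theorem~\ref{thm:aperiodicity_implies_unique_expectation}; no further argument is needed.

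The main obstacle is \ref{enu:noncommutative_Cartans2} \(\Rightarrow\) \ref{enu:noncommutative_Cartans3}, which I plan to establish by combining the Pitts--Zarikian ideal-detection criterion with Proposition~\ref{pro:intermediate_detection_to_purely_outer}. Composing the given faithful conditional expectation \(E\colon B\to A\) with the inclusion \(A\hookrightarrow I(A)\) produces a faithful pseudo-expectation \(\tilde E\colon B\to I(A)\); the uniqueness hypothesis forces every pseudo-expectation to equal~\(\tilde E\), hence to be faithful. By \cite{Pitts-Zarikian:Unique_pseudoexpectation}*{Theorem~3.5}, this yields that~\(A\) detects ideals in every intermediate \(\Cst\)-subalgebra \(A\subseteq C\subseteq B\). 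On the other hand, since \(A\subseteq B\) is regular with a faithful conditional expectation, the theory of~\cite{Kwasniewski-Meyer:Cartan} supplies a canonical inverse semigroup~\(S\) of slices acting on~\(A\) by Hilbert bimodules, with a canonical surjection \(A\rtimes S\onto B\). The intermediate crossed products \(A\rtimes_\ess T\) required by Proposition~\ref{pro:intermediate_detection_to_purely_outer} sit as intermediate \(\Cst\)-subalgebras of~\(B\), so the detection property just established passes to them. Proposition~\ref{pro:intermediate_detection_to_purely_outer} then yields pure outerness of the \(S\)\nobreakdash-action, which is one of the equivalent forms of the noncommutative Cartan property listed in \cite{Kwasniewski-Meyer:Cartan}*{Theorem~4.2}.

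For the closing implications, assume first that~\(A\) contains an essential ideal that is simple or of Type~I. Given~\ref{enu:noncommutative_Cartans3}, we have \(B\cong A\rtimes_\red S\) for a purely outer \(S\)\nobreakdash-action, and Theorem~\ref{the:cycle_of_equivalences_2} identifies pure outerness with aperiodicity of the inclusion, delivering~\ref{enu:noncommutative_Cartans1}. If in addition~\(B\) is separable, the converse half of Theorem~\ref{the:almost_extension_aperiodic} upgrades~\ref{enu:noncommutative_Cartans1} to~\ref{enu:noncommutative_Cartans0}, closing the cycle.
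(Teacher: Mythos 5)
Your handling of the implications \ref{enu:noncommutative_Cartans0}\(\Rightarrow\)\ref{enu:noncommutative_Cartans1}\(\Rightarrow\)\ref{enu:noncommutative_Cartans2}, and of the closing implications \ref{enu:noncommutative_Cartans3}\(\Rightarrow\)\ref{enu:noncommutative_Cartans1} (via Theorem~\ref{the:cycle_of_equivalences_2}, using that condition~\ref{enu:noncommutative_Cartans3} gives \(B\cong A\rtimes_\red S=A\rtimes_\ess S\) for a closed purely outer action) and \ref{enu:noncommutative_Cartans1}\(\Rightarrow\)\ref{enu:noncommutative_Cartans0} in the separable case, agrees with the paper. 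The problem is your proof of \ref{enu:noncommutative_Cartans2}\(\Rightarrow\)\ref{enu:noncommutative_Cartans3}, where the paper takes an entirely different route: it verifies condition~(1) of \cite{Kwasniewski-Meyer:Cartan}*{Theorem~4.2} --- that for every ideal \(I\idealin A\) there is at most one conditional expectation \(IBI\to I\) --- by directly extending any such expectation to a pseudo-expectation \(B\to I(A)\) (Lemma~\ref{lem:unique_pseudo_ideals}), and then invokes uniqueness. Your route through Pitts--Zarikian and Proposition~\ref{pro:intermediate_detection_to_purely_outer} has two genuine gaps.

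First, Proposition~\ref{pro:intermediate_detection_to_purely_outer} needs \(A\) to detect ideals in \(A\rtimes_\ess T\), and you obtain detection only in intermediate subalgebras of~\(B\). The subalgebra of~\(B\) generated by~\(A\) and the slices indexed by~\(T\) is a quotient of \(A\rtimes T\) by an ideal meeting~\(A\) trivially, but there is no a priori surjection from it onto \(A\rtimes_\ess T\): that would require knowing that the given expectation~\(E\) agrees with the canonical generalised expectation of the \(T\)\nb-action (in particular that~\(E\) vanishes on \(B_t\cdot I_{1,t}^\bot\)), which is not available at this stage and is essentially part of what one is trying to prove. In other words, \(B\) is \(S\)\nb-graded, hence \(A\rtimes S\onto B\), but it is not yet known to be an \emph{exotic} crossed product (i.e.\ to surject onto \(A\rtimes_\ess S\)), so the implication ``detection in intermediate algebras \(\Rightarrow\) purely outer'' from Figure~\ref{fig:diagram} cannot be invoked. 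Second, even granting pure outerness of the canonical action, that alone is not an equivalent form of the noncommutative Cartan property: the relevant condition in \cite{Kwasniewski-Meyer:Cartan}*{Theorem~4.2} is that \(B\cong A\rtimes_\red S\) for a \emph{closed} and purely outer action, and you verify neither closedness nor the identification of~\(B\) with the reduced crossed product. Both difficulties are exactly what the paper's Lemma~\ref{lem:unique_pseudo_ideals} is designed to bypass.
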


\begin{proof}
  Recall that if~\ref{enu:noncommutative_Cartans3} holds, then
  \(B\cong A\rtimes_\red S=A\rtimes_\ess S\) for a closed purely
  outer action of an inverse semigroup~\(S\).  Hence all the
  implications except
  \ref{enu:noncommutative_Cartans2}\(\Rightarrow\)%
  \ref{enu:noncommutative_Cartans3} are included in
  Figure~\ref{fig:diagram}.  Implication
  \ref{enu:noncommutative_Cartans2}\(\Rightarrow\)%
  \ref{enu:noncommutative_Cartans3} follows from
  \cite{Kwasniewski-Meyer:Cartan}*{Theorem~4.2} and the following
  lemma, which checks condition~(1) of
  \cite{Kwasniewski-Meyer:Cartan}*{Theorem~4.2}.
\end{proof}

\begin{lemma}
  \label{lem:unique_pseudo_ideals}
  Let \(A\subseteq B\) be a \(\Cst\)\nb-inclusion with a unique
  pseudo-expectation.  For every ideal~\(I\) in~\(A\), there is at
  most one conditional expectation for the inclusion
  \(I\subseteq IBI\).
\end{lemma}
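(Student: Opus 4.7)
The plan is to lift each conditional expectation $E_j\colon IBI \to I$ to a pseudo-expectation $\hat{E}_j\colon B \to I(A)$ for the inclusion $A\subseteq B$, and then invoke the assumed uniqueness of pseudo-expectations to force $\hat{E}_1 = \hat{E}_2$, hence $E_1 = E_2$ after restriction to $IBI$.

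First I would show that any conditional expectation $E\colon IBI \to I$ is automatically an $A$-bimodule map and agrees with the identity on $A \cap IBI$. Let $(e_\lambda)$ be an approximate identity for~$I$. For $x \in A \cap IBI$, the products $e_\lambda x e_\lambda$ lie in~$I$ (since $I$ is an ideal in~$A$), so $E(e_\lambda x e_\lambda) = e_\lambda x e_\lambda$; as $x \in IBI$, one also has $e_\lambda x e_\lambda \to x$ in norm, and continuity of~$E$ yields $E(x) = x$. An analogous computation using the automatic $I$-bimodularity of a conditional expectation (Tomiyama) gives $E(ab) = aE(b)$ and $E(ba) = E(b)a$ for all $a \in A$ and $b \in IBI$. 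Consequently,
\begin{equation*}
  \Phi_j(a + b) \defeq a + E_j(b), \qquad a \in A,\ b \in IBI,
\end{equation*}
is a well-defined, $A$-bimodular linear map $\Phi_j\colon V \to I(A)$ on the operator system $V \defeq A + IBI \subseteq B$ that fixes~$A$ pointwise and equals~$E_j$ on~$IBI$.

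Next I would verify complete positivity of $\Phi_j$; granting this, injectivity of $I(A)$ extends $\Phi_j$ to a completely positive contraction $\hat{E}_j\colon B \to I(A)$, and $\hat{E}_j|_A = \Phi_j|_A = \mathrm{id}_A$ makes $\hat{E}_j$ a pseudo-expectation for $A\subseteq B$. The hypothesis of uniqueness of the pseudo-expectation then forces $\hat{E}_1 = \hat{E}_2$, and restricting to~$IBI$ yields $E_1 = E_2$ as desired.

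The hard part will be the complete positivity of~$\Phi_j$, since the decomposition $x = a + b$ is non-unique and CP on the sum $A + IBI$ does not follow formally from CP on each summand. A natural approach is to extend $E_j$ directly to a CP contraction $\tilde{E}_j\colon B \to I(A)$ by injectivity; a multiplicative-domain argument then places~$I$ inside the multiplicative domain of~$\tilde{E}_j$, which forces $(\tilde{E}_j(a) - a)\cdot i = i\cdot(\tilde{E}_j(a) - a) = 0$ for every $a\in A$ and $i\in I$. The remaining task is to promote this to $\tilde{E}_j|_A = \mathrm{id}_A$, which amounts to controlling the annihilator of~$I$ inside~$I(A)$ (an essentiality argument) or to producing an adjusted extension that exhibits $\Phi_j$ as the restriction of a genuine CP map on~$B$.
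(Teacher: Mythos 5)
Your reduction is the right one and matches the paper's strategy: extend a conditional expectation \(E_j\colon IBI\to I\) to a pseudo-expectation \(B\to I(A)\) and invoke uniqueness. The map \(\Phi_j(a+b)=a+E_j(b)\) is indeed well defined (because \(E_j\) fixes \(A\cap IBI\)) and is the map one wants. But there is a genuine gap exactly where you flag it: you never establish that \(\Phi_j\) is completely positive, equivalently that \(E_j\) admits a completely positive extension to \(B\) restricting to the identity on \emph{all} of \(A\), and your proposed fallback cannot close this when \(I\) is not essential in \(A\). The multiplicative-domain argument only yields \((\tilde E_j(a)-a)\cdot i=0\) for \(i\in I\), which says nothing about \(\tilde E_j\) on the annihilator \(I^\bot=\setgiven{a\in A}{aI=0}\); if \(I^\bot\neq 0\) there are many completely positive extensions of \(E_j\) that are not the identity on \(I^\bot\) (already for \(A=\C\oplus\C\), \(I=\C\oplus 0\)), so no essentiality argument about \(I\) alone can force \(\tilde E_j|_A=\Id_A\). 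The identity on \(I^\bot\) must be built into the construction by hand, and doing so in a visibly completely positive way is the actual content of the proof.

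The paper closes this as follows. Since \(IBI\cdot I^\bot=I^\bot\cdot IBI=0\) and \(IBI\cap I^\bot=0\), the sum \(IBI+I^\bot\) is an orthogonal direct sum of ideals, so \(E\oplus\Id_{I^\bot}\colon IBI+I^\bot\to I+I^\bot\) is a genuine conditional expectation; this is where complete positivity comes for free. One then extends it strictly to a conditional expectation \(\Mult(IBI+I^\bot)\to\Mult(I+I^\bot)\) (using \cite{Kwasniewski-Meyer:Cartan}*{Lemma 4.6}), embeds \(IBI+A\) into \(\Mult(IBI+I^\bot)\), and uses that \(I+I^\bot\) is an \emph{essential} ideal of \(A\), so that \(A\subseteq\Mult(I+I^\bot)\subseteq\Locmult(A)\subseteq I(A)\). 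The restriction of the extended expectation to \(IBI+A\) is precisely your \(\Phi_j\), now known to be completely positive and to fix \(A\); only at this last stage does injectivity of \(I(A)\) enter, to extend from \(IBI+A\) to all of \(B\). So the missing idea is to replace \(I\) by \(I+I^\bot\), which is always essential in \(A\), and to exploit the orthogonality of \(IBI\) and \(I^\bot\) to obtain complete positivity of the glued map.
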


\begin{proof}
  Let~\(I\) be an ideal in~\(A\) and let \(E\colon IBI\to I\) be a
  conditional expectation.  We are going to prove that~\(E\) extends
  to a pseudo-expectation \(B\to I(A)\).  Since the latter is
  unique, it follows that~\(E\) is also unique.

  Let~\(I^\bot\) be the annihilator of~\(I\) in~\(A\).  We
  extend~\(E\) to a conditional expectation for the inclusion
  \(I+I^\bot \subseteq IBI+I^\bot\) by putting
  \(E|_{I^\bot}\defeq \Id_{I^\bot}\).  This extends to a strictly
  continuous conditional expectation
  \(\overline{E}\colon \Mult (IBI+I^\bot)\to \Mult(I+I^\bot)\) by
  \cite{Kwasniewski-Meyer:Cartan}*{Lemma 4.6}.  Since \(IBI+I^\bot\)
  is an essential ideal in \(IBI+A\) we may treat the latter as a
  subalgebra of \(\Mult (IBI+I^\bot)\).  In this way, we get a
  generalised conditional expectation
  \(\overline{E}\colon IBI+I^\bot\to \Mult(I+I^\bot)\).  Since
  \(I+I^\bot\) is an essential ideal in~\(A\) we have
  \(\Mult(I+I^\bot)\subseteq \Locmult(A) \subseteq I(A)\).  Since
  \(I(A)\) is injective, the map
  \(\overline{E}\colon B\supseteq IBI+A\to \Mult(I+I^\bot)\subseteq
  I(A)\) extends to a pseudo-expectation \(B\to I(A)\).
\end{proof}

\begin{bibdiv}
  \begin{biblist}
\bib{Akemann-Anderson-Pedersen:Excising}{article}{
  author={Akemann, Charles A.},
  author={Anderson, Joel},
  author={Pedersen, Gert K.},
  title={Excising states of $C^*$-algebras},
  journal={Canad. J. Math.},
  volume={38},
  date={1986},
  number={5},
  pages={1239--1260},
  issn={0008-414X},
  review={\MR {869724}},
  doi={10.4153/CJM-1986-063-7},
}

\bib{Akemann-Weaver:Naimark_problem}{article}{
  author={Akemann, {Ch}arles},
  author={Weaver, Nik},
  title={Consistency of a counterexample to Naimark's problem},
  journal={Proc. Natl. Acad. Sci. USA},
  volume={101},
  date={2004},
  number={20},
  pages={7522--7525},
  issn={1091-6490},
  review={\MR {2057719}},
  doi={10.1073/pnas.0401489101},
}

\bib{Anderson:Extensions_states}{article}{
  author={Anderson, Joel},
  title={Extensions, restrictions, and representations of states on $C^*$-algebras},
  journal={Trans. Amer. Math. Soc.},
  volume={249},
  date={1979},
  number={2},
  pages={303--329},
  issn={0002-9947},
  review={\MR {525675}},
  doi={10.2307/1998793},
}

\bib{Archbold-Spielberg:Topologically_free}{article}{
  author={Archbold, Robert J.},
  author={Spielberg, John S.},
  title={Topologically free actions and ideals in discrete $C^*$\nobreakdash -dynamical systems},
  journal={Proc. Edinburgh Math. Soc. (2)},
  volume={37},
  date={1994},
  number={1},
  pages={119--124},
  issn={0013-0915},
  review={\MR {1258035}},
  doi={10.1017/S0013091500018733},
}

\bib{BussExel:Fell.Bundle.and.Twisted.Groupoids}{article}{
  author={Buss, Alcides},
  author={Exel, Ruy},
  title={Fell bundles over inverse semigroups and twisted \'etale groupoids},
  journal={J. Operator Theory},
  volume={67},
  date={2012},
  number={1},
  pages={153--205},
  issn={0379-4024},
  review={\MR {2881538}},
  eprint={http://www.theta.ro/jot/archive/2012-067-001/2012-067-001-007.html},
}

\bib{Buss-Exel-Meyer:Reduced}{article}{
  author={Buss, Alcides},
  author={Exel, Ruy},
  author={Meyer, Ralf},
  title={Reduced \(C^*\)\nobreakdash -algebras of Fell bundles over inverse semigroups},
  journal={Israel J. Math.},
  date={2017},
  volume={220},
  number={1},
  pages={225--274},
  issn={0021-2172},
  review={\MR {3666825}},
  doi={10.1007/s11856-017-1516-9},
}

\bib{Buss-Meyer:Actions_groupoids}{article}{
  author={Buss, Alcides},
  author={Meyer, Ralf},
  title={Inverse semigroup actions on groupoids},
  journal={Rocky Mountain J. Math.},
  issn={0035-7596},
  date={2017},
  volume={47},
  number={1},
  pages={53--159},
  doi={10.1216/RMJ-2017-47-1-53},
  review={\MR {3619758}},
}

\bib{Cuntz:Dimension_functions}{article}{
  author={Cuntz, Joachim},
  title={Dimension functions on simple $C^*$\nobreakdash -algebras},
  journal={Math. Ann.},
  volume={233},
  date={1978},
  number={2},
  pages={145--153},
  issn={0025-5831},
  review={\MR {0467332}},
  doi={10.1007/BF01421922},
}

\bib{Exel:noncomm.cartan}{article}{
  author={Exel, Ruy},
  title={Noncommutative Cartan subalgebras of $C^*$\nobreakdash -algebras},
  journal={New York J. Math.},
  issn={1076-9803},
  volume={17},
  date={2011},
  pages={331--382},
  eprint={http://nyjm.albany.edu/j/2011/17-17.html},
  review={\MR {2811068}},
}

\bib{Exel:Partial_dynamical}{book}{
  author={Exel, Ruy},
  title={Partial dynamical systems, Fell bundles and applications},
  series={Mathematical Surveys and Monographs},
  volume={224},
  date={2017},
  pages={321},
  isbn={978-1-4704-3785-5},
  isbn={978-1-4704-4236-1},
  publisher={Amer. Math. Soc.},
  place={Providence, RI},
  review={\MR {3699795}},
}

\bib{Exel-Pitts:Weak_Cartan}{article}{
  author={Exel, Ruy},
  author={Pitts, David R.},
  status={eprint},
  note={\arxiv {1901.09683}},
  title={Characterizing groupoid \(\textup {C}^*\)\nobreakdash -algebras of non-Hausdorff \'etale groupoids},
  date={2019},
}

\bib{Frank:Injective_local_multiplier}{article}{
  author={Frank, Michael},
  title={Injective envelopes and local multiplier algebras of $C^*$-algebras},
  journal={Int. Math. J.},
  volume={1},
  date={2002},
  number={6},
  pages={611--620},
  issn={1311-6797},
  review={\MR {1860642}},
  note={\arxiv {math/9910109v2}},
}

\bib{Hamana:Injective-Envelope-Cstar}{article}{
  author={Hamana, Masamichi},
  title={Injective envelopes of $C^*$-algebras},
  journal={J. Math. Soc. Japan},
  volume={31},
  date={1979},
  number={1},
  pages={181--197},
  issn={0025-5645},
  review={\MR {519044}},
  doi={10.2969/jmsj/03110181},
}

\bib{Ionescu-Williams:Remarks_ideal_structure}{article}{
  author={Ionescu, Marius},
  author={Williams, Dana P.},
  title={Remarks on the ideal structure of Fell bundle $C^*$\nobreakdash -algebras},
  journal={Houston J. Math.},
  volume={38},
  date={2012},
  number={4},
  pages={1241--1260},
  issn={0362-1588},
  review={\MR {3019033}},
  eprint={http://www.math.uh.edu/~hjm/restricted/pdf38(4)/13ionescu.pdf},
}

\bib{Kennedy-Schafhauser:noncomm_boundaries}{article}{
  author={Kennedy, Matthew},
  author={Schafhauser, Christopher},
  title={Noncommutative boundaries and the ideal structure of reduced crossed products},
  journal={Duke Math. J.},
  volume={168},
  date={2019},
  number={17},
  pages={3215--3260},
  issn={0012-7094},
  review={\MR {4030364}},
  doi={10.1215/00127094-2019-0032},
}

\bib{Kishimoto:Outer_crossed}{article}{
  author={Kishimoto, Akitaka},
  title={Outer automorphisms and reduced crossed products of simple $C^*$\nobreakdash -algebras},
  journal={Comm. Math. Phys.},
  volume={81},
  date={1981},
  number={3},
  pages={429--435},
  issn={0010-3616},
  review={\MR {634163}},
  eprint={http://projecteuclid.org/euclid.cmp/1103920327},
}

\bib{Kishimoto:Freely_acting}{article}{
  author={Kishimoto, Akitaka},
  title={Freely acting automorphisms of $C^*$\nobreakdash -algebras},
  journal={Yokohama Math. J.},
  volume={30},
  date={1982},
  number={1-2},
  pages={39--47},
  issn={0044-0523},
  review={\MR {684348}},
  eprint={http://hdl.handle.net/10131/5450},
}

\bib{Kumjian:Diagonals}{article}{
  author={Kumjian, Alexander},
  title={On $C^*$\nobreakdash -diagonals},
  journal={Canad. J. Math.},
  volume={38},
  date={1986},
  number={4},
  pages={969--1008},
  issn={0008-414X},
  review={\MR {854149}},
  doi={10.4153/CJM-1986-048-0},
}

\bib{Kwasniewski:Crossed_products}{article}{
  author={Kwa\'sniewski, Bartosz Kosma},
  title={Crossed products by endomorphisms of $C_0(X)$-algebras},
  journal={J. Funct. Anal.},
  volume={270},
  date={2016},
  number={6},
  pages={2268--2335},
  issn={0022-1236},
  review={\MR {3460241}},
  doi={10.1016/j.jfa.2016.01.015},
}

\bib{Kwasniewski-Meyer:Aperiodicity}{article}{
  author={Kwa\'sniewski, Bartosz Kosma},
  author={Meyer, Ralf},
  title={Aperiodicity, topological freeness and pure outerness: from group actions to Fell bundles},
  journal={Studia Math.},
  issn={0039-3223},
  volume={241},
  number={3},
  date={2018},
  pages={257--303},
  doi={10.4064/sm8762-5-2017},
  review={\MR {3756105}},
}

\bib{Kwasniewski-Meyer:Stone_duality}{article}{
  author={Kwa\'sniewski, Bartosz Kosma},
  author={Meyer, Ralf},
  title={Stone duality and quasi-orbit spaces for generalised \(\textup {C}^*\)\nobreakdash -inclusions},
  journal={Proc. Lond. Math. Soc. (3)},
	  volume={121},
  number={4},
   pages={788-827},
  date={2020},
  doi={10.1112/plms.12332},
}

\bib{Kwasniewski-Meyer:Cartan}{article}{
  author={Kwa\'sniewski, Bartosz Kosma},
  author={Meyer, Ralf},
  title={Noncommutative Cartan \(\textup {C}^*\)\nobreakdash -subalgebras},
  journal={Trans. Amer. Math. Soc.},
	 volume={373},
  number={4},
   pages={2659-2698},
  date={2020},
	doi={10.1090/tran/8174}
	}

\bib{Kwasniewski-Meyer:Essential}{article}{
  author={Kwa\'sniewski, Bartosz Kosma},
  author={Meyer, Ralf},
  title={Essential crossed products by inverse semigroup actions: simplicity and pure infiniteness},
	journal={Doc. Math},
	 pages={271-335},
	 volume={26},  
 % note={\arxiv {1906.06202}},
   date={2021},
	doi={10.25537/dm.2021v26.271-335},
}

\bib{Kwasniewski-Szymanski:Pure_infinite}{article}{
  author={Kwa\'sniewski, Bartosz Kosma},
  author={Szyma\'nski, Wojciech},
  title={Pure infiniteness and ideal structure of \(\textup {C}^*\)\nobreakdash -algebras associated to Fell bundles},
  journal={J. Math. Anal. Appl.},
  volume={445},
  date={2017},
  number={1},
  pages={898--943},
  issn={0022-247X},
  doi={10.1016/j.jmaa.2013.10.078},
  review={\MR {3543802}},
}

\bib{Nagy-Reznikoff:Pseudo-diagonals}{article}{
  author={Nagy, Gabriel},
  author={Reznikoff, Sarah},
  title={Pseudo-diagonals and uniqueness theorems},
  journal={Proc. Amer. Math. Soc.},
  volume={142},
  date={2014},
  number={1},
  pages={263--275},
  issn={0002-9939},
  review={\MR {3119201}},
  doi={10.1090/S0002-9939-2013-11756-9},
}

\bib{Olesen-Pedersen:Applications_Connes}{article}{
  author={Olesen, Dorte},
  author={Pedersen, Gert K.},
  title={Applications of the Connes spectrum to $C^*$\nobreakdash -dynamical systems},
  journal={J. Funct. Anal.},
  volume={30},
  date={1978},
  number={2},
  pages={179--197},
  issn={0022-1236},
  review={\MR {515224}},
  doi={10.1016/0022-1236(78)90069-1},
}

\bib{Olesen-Pedersen:Applications_Connes_2}{article}{
  author={Olesen, Dorte},
  author={Pedersen, Gert K.},
  title={Applications of the Connes spectrum to $C^*$\nobreakdash -dynamical systems, II},
  journal={J. Funct. Anal.},
  volume={36},
  date={1980},
  number={1},
  pages={18--32},
  issn={0022-1236},
  review={\MR {515224}},
  doi={10.1016/0022-1236(80)90104-4},
}

\bib{Olesen-Pedersen:Applications_Connes_3}{article}{
  author={Olesen, Dorte},
  author={Pedersen, Gert K.},
  title={Applications of the Connes spectrum to $C^*$\nobreakdash -dynamical systems. III},
  journal={J. Funct. Anal.},
  volume={45},
  date={1982},
  number={3},
  pages={357--390},
  issn={0022-1236},
  review={\MR {650187}},
  doi={10.1016/0022-1236(82)90011-8},
}

\bib{Paulsen:Completely_bounded}{book}{
  author={Paulsen, Vern},
  title={Completely bounded maps and operator algebras},
  series={Cambridge Studies in Advanced Mathematics},
  volume={78},
  publisher={Cambridge University Press, Cambridge},
  date={2002},
  pages={xii+300},
  isbn={0-521-81669-6},
  review={\MR {1976867}},
  doi={10.1017/CBO9780511546631},
}

\bib{Pedersen:Cstar_automorphisms}{book}{
  author={Pedersen, Gert K.},
  title={$C^*$\nobreakdash -algebras and their automorphism groups},
  series={London Mathematical Society Monographs},
  volume={14},
  publisher={Academic Press},
  place={London},
  date={1979},
  pages={ix+416},
  isbn={0-12-549450-5},
  review={\MR {548006}},
}

\bib{Pitts:Regular_I}{article}{
  author={Pitts, David R.},
  title={Structure for regular inclusions. I},
  journal={J. Operator Theory},
  volume={78},
  date={2017},
  number={2},
  pages={357--416},
  issn={0379-4024},
  review={\MR {3725511}},
  doi={10.7900/jot.2016sep15.2128},
}

\bib{Pitts:Regular_II}{article}{
  author={Pitts, David R.},
  title={Structure for Regular Inclusions. II: Cartan envelopes, pseudo-expectations and twists},
  date={2020},
  status={eprint},
  note={\arxiv {2006.00834}},
}

\bib{Pitts-Zarikian:Unique_pseudoexpectation}{article}{
  author={Pitts, David R.},
  author={Zarikian, Vrej},
  title={Unique pseudo-expectations for $C^*$-inclusions},
  journal={Illinois J. Math.},
  volume={59},
  date={2015},
  number={2},
  pages={449--483},
  issn={0019-2082},
  review={\MR {3499520}},
  eprint={http://projecteuclid.org/euclid.ijm/1462450709},
}

\bib{Renault:Cartan.Subalgebras}{article}{
  author={Renault, Jean},
  title={Cartan subalgebras in $C^*$\nobreakdash -algebras},
  journal={Irish Math. Soc. Bull.},
  number={61},
  date={2008},
  pages={29--63},
  issn={0791-5578},
  review={\MR {2460017}},
  eprint={http://www.maths.tcd.ie/pub/ims/bull61/S6101.pdf},
}

\bib{Rieffel:Actions_finite}{article}{
  author={Rieffel, Marc A.},
  title={Actions of finite groups on $C^*$\nobreakdash -algebras},
  journal={Math. Scand.},
  volume={47},
  date={1980},
  number={1},
  pages={157--176},
  issn={0025-5521},
  review={\MR {600086}},
  doi={10.7146/math.scand.a-11882},
}

\bib{Zarikian:Unique_expectations}{article}{
  author={Zarikian, Vrej},
  title={Unique expectations for discrete crossed products},
  journal={Ann. Funct. Anal.},
  volume={10},
  date={2019},
  number={1},
  pages={60--71},
  issn={2008-8752},
  review={\MR {3899956}},
  doi={10.1215/20088752-2018-0008},
}

\bib{Zarikian:Pure_extension}{article}{
  author={Zarikian, Vrej},
  title={The pure extension property for discrete crossed products},
  journal={Houston J.  Math.},
  volume={45},
  date={2019},
   pages={233--243}
}

  \end{biblist}
\end{bibdiv}
\end{document}